\documentclass[11pt]{amsart}
\usepackage[toc,page]{appendix}
\usepackage{amsmath,amssymb,latexsym}
\usepackage[small]{caption}
\usepackage{graphicx,color,mathrsfs,tikz}
\usepackage{subfigure,color}
\usepackage{cite}
\usepackage{multicol}
\usepackage[colorlinks=true,urlcolor=blue,
citecolor=red,linkcolor=blue,linktocpage,pdfpagelabels,
bookmarksnumbered,bookmarksopen]{hyperref}
\usepackage[italian,english]{babel}
\usepackage{enumitem}
\usepackage[left=2.65cm,right=2.65cm,top=2.9cm,bottom=2.9cm]{geometry}
\usepackage[hyperpageref]{backref}

\usepackage[colorinlistoftodos,prependcaption]{todonotes}


\numberwithin{equation}{section}
\newtheorem{theorem}{Theorem}[section]
\newtheorem{proposition}[theorem]{Proposition}

\newtheorem{definition}[theorem]{Definition}
\theoremstyle{definition}
\newtheorem{example}[theorem]{Example}

\newtheorem{problem}{Problem}

\newtheorem{remark}[theorem]{Remark}
\newcommand{\eps}{\varepsilon}
\newcommand{\A}{{\mathcal A}}
\newcommand{\B}{{\mathcal B}}

\renewcommand{\H}{{\mathcal H}}
\newcommand{\I}{{\mathcal I}}
\renewcommand{\L}{{\mathcal L}}
\newcommand{\Co}{{\mathcal C}}

\newcommand{\R}{{\mathbb R}}

\renewcommand{\div}{{\rm div}}

\newcommand{\loc}{{\rm loc}}

\renewcommand{\theta}{\vartheta}


\newcommand{\Rn}{{\mathbb R^{n}}}
\newcommand{\eqlab}[1]{\begin{equation}  \begin{aligned}#1 \end{aligned}\end{equation}} 
\newcommand{\bgs}[1]{\begin{equation*} \begin{aligned}#1\end{aligned}\end{equation*}} 
  \newcommand{\sys}[2][]{\begin{equation*}#1  \left\{\begin{aligned}#2\end{aligned}\right.\end{equation*}}

\def\XXint#1#2#3{{\setbox0=\hbox{$#1{#2#3}{\int}$ }
\vcenter{\hbox{$#2#3$ }}\kern-.6\wd0}}

\setcounter{tocdepth}{1}

\title[The stickiness phenomena]{The stickiness phenomena of nonlocal minimal surfaces: new results and a comparison with the classical case}

\author[C. Bucur]{Claudia Bucur}
%
\address[C. Bucur]{Isituto Nazionale di Alta Matematica ``Francesco Severi''}
\email{claudia.bucur@aol.com}

\thanks{The author is a member
	of {\em Gruppo Nazionale per l'Analisi Ma\-te\-ma\-ti\-ca, la Probabilit\`a e le loro Applicazioni} (GNAMPA) 
	of the {\em Istituto Nazionale di Alta Matematica} (INdAM)}

\thanks{I am grateful to Prof. Fausto Ferrari for the invitation to give a Pini Analysis Seminar and to the University of Bologna for the hospitality. I also deeply thank Luca Lombardini, Enrico Valdinoci and Serena Dipierro for their precious suggestions and careful reading of this note.}

\keywords{}

\begin{document}

\begin{abstract}
We discuss in this note the stickiness phenomena for nonlocal minimal surfaces. Classical minimal surfaces in convex domains do not stick to the boundary of the domain, hence examples of stickiness can be obtained only by removing the assumption of convexity. On the other hand, in the nonlocal framework, stickiness is ``generic''.  We provide various examples from the literature, and focus  on the case of complete stickiness in highly nonlocal regimes.\\

\bigskip

In questa nota ci occupiamo del fenomeno di attaccamento al bordo delle superfici minime nonlocali. Generalmente, le superfici minime classiche non presentano tale fenomeno in un dominio convesso, pertanto alcuni esempi di attaccamento al bordo si ottengono solamente in assenza della condizione di convessit\`{a}. Per contro, nel contesto nonlocale, l'attaccamento al bordo \`{e} un comportamento ``generico''. Proporremo  diversi esempi dalla letteratura, per di pi\`{u} incentrati sul caso di attaccamento completo al bordo,  nei cosiddetti regimi altamente nonlocali.    
\end{abstract}

\maketitle
\tableofcontents
 
The problem regarding surfaces with least area among those enclosed by a given curve is one of the first questions that arose in the calculus of variations. Named after Plateau due to his experiments on soap films and bubbles, carried out by the French physicist  in the nineteenth century, the question on minimal surfaces  actually dates back to Lagrange (1760).  Plateau's problem received some first answers in the thirties in $\R^3$, by Douglas and Rad\`{o}.  In its full generality, it was attacked by several outstanding mathematicians, who tackled the problem from different, very ingenious prospectives, such as, to mention the most famous:  Almgren and Allard, introducing the theory of varifolds, Federer and Fleming developing the theory of currents, Reifenberg applying methods from algebraic topology, De Giorgi working with the perimeter operator (see the beautiful Introduction of \cite{MinGiusti} for more details). The achievements and the history on Plateau's and closely related problems are inscribed in many branches of mathematics, such as geometric measure theory (actually born to study this problem), differential geometry, calculus of variations, potential theory, complex analysis and mathematical physics.   The story is far from being over, since the various fields of study are now days very active, they present a variety of new accomplishments and still pose many open problems. The reader can consult the following books, surveys and papers \cite{ColdMin,Perez} for classical minimal surfaces, \cite{NevMar,Nevmarr} for the Willmore conjecture and min-max theory approach, \cite{DeLellis1,DeLellis2,ambtt} for recent achievements in geometric measure theory, and can find further references of their interest therein.

This note will just ``scratch the surface'' in the attempt to give an introduction to the argument. We will focus on the case of co-dimension one, following the approach of the Italian mathematician Ennio De Giorgi, who defines minimal surfaces as boundaries of sets which minimize a perimeter operator inside a domain, among sets with given boundary data. In this context, the main argument on which we focus is the so-called stickiness phenomenon: in some occasions, minimal surfaces are forced by the minimization problem and the boundary constraints to ``attach'' to the boundary of the given domain. 

For classical minimal surfaces, this phenomena is rare and happens only in ``extreme'' conditions. In convex domains, minimal surfaces reach transversally the boundary of the domain, so stickiness is not contemplated. Furthermore, minimal graphs (i.e., minimal surfaces which are also graphs) always attain in convex domains  their (continuous) boundary data in a continuous way. We will present in Example \ref{msstick1} a situation in which stickiness may happen if the domain is not convex.

On the other hand, nonlocal minimal surfaces, introduced as the nonlocal (fractional) counterpart of the classical ones, typically stick. Even taking the ``best'' domain (i.e. a ball) and a very nice exterior data, surprisingly the stickiness phenomenon is not only possible, but it appears in many circumstances.  In this note, we gather several examples from the literature and we discuss in more detail the case of complete stickiness (that is, when the nonlocal minimal surface attaches completely to the boundary of the domain), in highly nonlocal regimes (that is, for small values of the fractional parameter).

\bigskip

In the rest of the paper, we set the following notations:
\begin{itemize}
\item points in $\Rn$ as $x=(x_1,\dots,x_n)$ and points in $\R^{n+1}$ as $X=(x,x_{n+1})$,
\item the $(n-1)$-Hausdorff measure as $\H^{n-1}$,
\item the complementary of a set $\Omega \subset \Rn$ by $\Co \Omega=\Rn \setminus \Omega$,
\item the ball of radius $r>0$ and center $x\in \Rn$ as
\[ B_r(x)=\big\{ y\in \Rn \; \big| \; |y-x|<r\big\}, \qquad B_r:= B_r(0),\]
\item the area of the unit sphere as
\[ \omega_n:=\H^{n-1}(\partial B_1).\]

\end{itemize}

\section{An introduction to classical minimal surfaces}

 Just to give a basic idea, the approach of De Giorgi to minimal surfaces can be summarized as follows.

Consider  an  open set $\Omega \subset \Rn$ and a measurable set $E \subset \Rn$. If the set $E$ has $C^2$ boundary inside $\Omega$, the area of the boundary of $E$ in $\Omega$ is given by
\eqlab{\label{smo} \mbox{Area}(\partial E \cup \Omega) = \H^{n-1}(\partial E\cap \Omega).}
On the other hand, in case $E$ does not have a smooth boundary, one can introduce a weak version of the perimeter.

\begin{definition} Let $\Omega\subset \Rn$ be an open set and $E\subset\Rn $ be a measurable set. The perimeter of $E$ in $\Omega$ is given by
\eqlab{ \label{bv} P(E,\Omega):= \sup_{g \in C_c^1(\Omega,\Rn), |g|\leq 1} \int_{E} \div \, g \, dx.}
\end{definition}

Notice that when $E$ has $C^2$ boundary, the expected \eqref{smo} is recovered.  Indeed, taking any $g \in C_c^1(\Omega, \Rn)$, we have that 
\bgs{ \int_{E} \mbox{div} g \, dx = \int_{\partial E} g \cdot \nu_E \, d \H^{n-1}  ,}
using the divergence theorem and denoting $\nu_E$ as the exterior normal derivative to $E$. Then
\bgs{ P(E,\Omega) = &\;  \sup_{g \in C_c^1(\Omega,\Rn), |g|\leq 1}\int_{E} \mbox{div} g \, dx \\
		=&\;  \sup_{g \in C_c^1(\Omega,\Rn),|g|\leq 1}  \int_{\partial E} g \cdot \nu_E \, d \H^{n-1}  
		\\
		 \leq  &\;  \int_{\partial E\cap \Omega}  d \H^{n-1}    =\H^{n-1}(\partial E \cap \Omega) .}
		 A particular choice of $g$ leads to the opposite inequality and proves the statement. Since $E$ has smooth boundary, $\nu_E$ is a $C^1$ vector valued function, so it can be extended to a vector field $N \in C^1(\Rn, \Rn)$, with $\|N\|\leq 1$. Consider a cut-off function $\eta \in C^\infty_c(\Omega)$ with $|\eta|\leq 1$ and use $g =\eta N$. Then  
		 \bgs{  P(E,\Omega)=&\; \sup_{g \in C_c^1(\Omega,\Rn),|g|\leq 1}  \int_{\partial E} g \cdot \nu_E \, d \H^{n-1}  
		 \\ 
		 \geq &\;  \sup_{\eta \in C^\infty_c(\Omega), |\eta|\leq 1} \int_{\partial E} \eta \, d \H^{n-1} 
				 \\
		 =&\;\H^{n-1}(\partial E \cap \Omega).}
		We recall that 
		 the space of functions of bounded variation $BV(\Omega)$ is defined 
		 as
		 \[ BV(\Omega):=\big\{ u\in L^1(\Omega) \; \big| \; [u]_{BV(\Omega)}<\infty\big\}, 
\]
where
\[ [u]_{BV(\Omega)}=  \sup_{g \in C_c^1(\Omega,\Rn), |g|\leq 1}  \int_{\Rn} u \, \div g\, dx,\]
and that $BV(\Omega)$ is a Banach space with the norm
\[ \|u\|_{BV(\Omega)} =\|u\|_{L^1(\Omega)} + [u]_{BV(\Omega)}.\]
It is evident then that the perimeter of a set $E\subset \Rn$ is  the total variation of its characteristic function, i.e. the BV norm 
of the characteristic function of $E$
\sys[\chi_E(x)=]{ &1, && x\in E
					\\ 
					&0, && x \in \Co E,}
					so we can write that
\eqlab{\label{bvv} P(E,\Omega)= [\chi_E]_{BV(\Omega)}.} 


\medskip

Sets of (locally) finite perimeter, or of (local) finite total variation (i.e., sets with $P(E,\Omega)<\infty$) bear the name of the Italian mathematician Renato Caccioppoli, who introduced them in 1927. 
Among sets of finite perimeter, minimal sets are the ones that minimize the perimeter with respect to some fixed ``boundary'' data. Of course, we work in the class of equivalence of sets, that is, we identify sets which coincide up to sets of measure zero. Maintaining the same perimeter, in principle  sets could have completely different topological boundaries. 
That is why in this note we assume measure theoretic notions (see for instance \cite[Chapter 3]{MinGiusti}, \cite[Section 1.2]{bucluk}).  
In order to avoid any technical difficulties, a set is defined as minimal in $\Omega$ if it minimizes the perimeter among competitors with whom it coincides outside of $\Omega$. Precisely:

\begin{definition} \label{min} 
Let $\Omega \subset \Rn$ be an open, bounded set, $\mathcal B$ be an open ball such that $\bar \Omega \subset \mathcal B$ and $E\subset \Rn$ be a measurable set. Given $E_0:= E \cap( \B \setminus \Omega)$, then $E$ is a minimal set in $\Omega$ with respect to $E_0$ if $P(E,\B)<\infty$ and
\[P(E, \B)\leq P(F,\B) \]
 for any $F$ such that
\[ F\cap (\B \setminus  \Omega) =E_0. \]
\end{definition}

Since the perimeter is a local operator, the ``boundary'' data considered is in the proximity of $\partial \Omega$. 
That is why it is not necessary to require that $E=F$ in the whole complementary of $\Omega$, and it suffices to consider the ball $\B$ (and hence, not to worry about what happens far away from $\Omega$).  We make the choice of a ball $\B$ for simplicity, one could consider an open set $\mathcal O \supset \bar \Omega$, or for some $\rho>0$  the set $\Omega_\rho:=\{ x\in \Rn \; | \; d(x,\partial \Omega)=\rho\}.$

\medskip

In the space $BV(\Omega)$, it is also quite natural to prove the existence of minimal sets. The lower semi-continuity of $BV(\Omega)$ functions and the fact that sequences of sets with uniformly bounded perimeters are precompact in the $L^1_{loc}$ topology, allow to employ  direct methods in the calculus of variations (see, for instance, \cite[Theorem 1.20]{MinGiusti}, \cite[Theorem 3.1]{TeoAlessio}) and to prove the existence of a minimal set, for a given $E_0$ of finite perimeter.
\medskip

\begin{theorem} Let $\Omega \subset \Rn$ be a bounded open set and let $E_0\subset \Co \Omega$ be a set of finite perimeter. Then there exists $E$ a minimal set in $\Omega$ with respect to $E_0$. 
\end{theorem}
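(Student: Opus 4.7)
The plan is to apply the direct method of the calculus of variations. First I would observe that the class of admissible competitors
\[ \mathcal A(E_0):=\big\{ F\subset \B \; \big| \; F\cap (\B\setminus \Omega)=E_0\big\}\]
is non-empty and contains a set of finite perimeter: for instance, $F_0:=E_0$ (inside $\B$) satisfies $P(F_0,\B)\leq P(E_0,\Rn)<\infty$ by hypothesis, so the infimum $m:=\inf\{P(F,\B) \,:\, F\in\mathcal A(E_0)\}$ is finite. I would then fix a minimizing sequence $\{E_k\}\subset \mathcal A(E_0)$, so that $P(E_k,\B)\to m$; in particular, $P(E_k,\B)$ is uniformly bounded in $k$.

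Second, I would invoke the compactness of the embedding $BV(\B)\hookrightarrow L^1(\B)$. Since $\|\chi_{E_k}\|_{L^1(\B)}\leq |\B|$ and $[\chi_{E_k}]_{BV(\B)}=P(E_k,\B)$ is uniformly bounded, the sequence $\{\chi_{E_k}\}$ is bounded in $BV(\B)$. By the compactness theorem for $BV$ functions, a subsequence (not relabeled) converges in $L^1(\B)$, and up to a further subsequence also almost everywhere, to some $u\in L^1(\B)$. Since each $\chi_{E_k}$ takes values in $\{0,1\}$, so does $u$ a.e., hence $u=\chi_E$ for some measurable set $E\subset\B$.

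Third, I would use the lower semi-continuity of the perimeter with respect to $L^1_{\loc}$ convergence (which is immediate from the supremum definition \eqref{bv}: for any admissible $g$, $\int_E\div g\,dx=\lim_k\int_{E_k}\div g\,dx\leq\liminf_k P(E_k,\B)$, and then take the sup in $g$). This yields
\[ P(E,\B)\leq \liminf_{k\to\infty} P(E_k,\B)=m.\]
Finally, I would check the boundary condition: since $\chi_{E_k}=\chi_{E_0}$ a.e.\ on $\B\setminus\Omega$ for every $k$, the $L^1$ limit satisfies $\chi_E=\chi_{E_0}$ a.e.\ on $\B\setminus\Omega$, i.e.\ $E\cap(\B\setminus\Omega)=E_0$ (in the measure-theoretic sense). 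Thus $E\in\mathcal A(E_0)$ and $P(E,\B)\leq m\leq P(E,\B)$, so $E$ realizes the minimum.

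The main obstacle is really just invoking the two correct functional-analytic facts in the right order: the $BV$ compactness to extract a candidate and the $L^1$-lower semicontinuity of the perimeter to conclude minimality. The only genuinely delicate point is ensuring the limit is again a characteristic function and that the exterior condition passes to the limit; both are handled by selecting a subsequence converging pointwise a.e., which is standard once $L^1$ convergence is in hand.
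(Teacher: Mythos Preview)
Your proposal is correct and follows exactly the approach the paper indicates: the paper does not spell out a proof but simply states that lower semi-continuity of the $BV$ semi-norm together with precompactness in $L^1_{\loc}$ of sequences with uniformly bounded perimeters allow one to apply the direct method, citing \cite[Theorem 1.20]{MinGiusti} and \cite[Theorem 3.1]{TeoAlessio}. Your write-up is precisely this argument made explicit, and the verification that the limit is again a characteristic function satisfying the exterior constraint is handled correctly.
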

  The arduous part is to prove {regularity}: are the boundaries of these sets actually smooth (almost everywhere)? This is indeed the case, and this entitles the theory to refer to boundaries of minimal sets as {minimal surfaces}. 
The boundary regularity of minimal sets can be summed up in the following theorem.

\begin{theorem}\label{minreg}
Let $\Omega\subset \Rn$ be a bounded open set and $E$ be a minimal set. Then $\partial E$ is smooth, up to a closed, singular set of Hausdorff dimension at most $n-8$.
\end{theorem}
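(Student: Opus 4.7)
The plan is to follow the classical De Giorgi–Federer–Simons regularity program, which reduces smooth regularity to the classification of tangent cones and then controls the singular set via dimension reduction. First, I would introduce the reduced boundary $\partial^* E$ (points where a measure-theoretic normal exists) and recall De Giorgi's structure theorem, which gives $\H^{n-1}(\partial E \setminus \partial^* E) = 0$ and represents $P(E,\Omega)$ as $\H^{n-1}\llcorner \partial^* E$. The key analytic tool is the monotonicity formula: for any $x \in \partial^* E$ and minimal $E$, the density ratio $r \mapsto P(E, B_r(x))/r^{n-1}$ is monotone nondecreasing, which in particular ensures the existence of blow-up limits.

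Next, I would pass to blow-ups $E_r := (E - x)/r$ and use a compactness argument (based on uniform perimeter bounds and $L^1_{\loc}$ compactness in $BV$) to extract a limit $E_\infty$. A standard lower-semicontinuity argument shows $E_\infty$ is itself a minimal set, and the monotonicity formula forces $E_\infty$ to be a \emph{minimal cone}. The core regularity input is then De Giorgi's \emph{flatness theorem} (or Allard-type $\eps$-regularity): if the blow-up cone is a half-space $\{x \cdot \nu > 0\}$, i.e. if the excess
\bgs{
\mathbf{e}(E, B_r(x)) := \inf_{\nu \in \Sf} \frac{1}{r^{n-1}} \int_{\partial^* E \cap B_r(x)} \frac{|\nu_E - \nu|^2}{2} \, d\H^{n-1}
}
is small, then in a smaller ball $\partial E$ is the graph of a $C^{1,\alpha}$ function; elliptic Schauder theory applied to the minimal surface equation then bootstraps this to $C^\infty$.

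The heart of the matter becomes: at which points $x \in \partial^* E$ can the blow-up fail to be flat? Here I would invoke \emph{Simons' theorem}, which asserts that in $\R^n$ with $n \leq 7$ every minimizing cone is a hyperplane; this gives full smoothness of $\partial^* E$ in low dimensions. For $n \geq 8$, one applies \emph{Federer's dimension reduction}: if $\Sigma(E) := \partial E \setminus \{\text{regular points}\}$ is the singular set, then assuming $\H^{s}(\Sigma(E)) > 0$ for some $s$ produces, via iterated blow-ups and the fact that tangent cones to singular points are themselves singular minimizing cones, a singular minimizing cone in $\R^{n-k}$ for some $k$, contradicting Simons as soon as $n - k \leq 7$. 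This yields $\dim_\H \Sigma(E) \leq n - 8$, and closedness of $\Sigma(E)$ follows from the openness of the flatness condition.

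The main obstacle is the proof of Simons' theorem itself: ruling out nontrivial minimizing cones in $\R^n$ for $n \leq 7$ requires the Simons inequality for the second fundamental form of the link $\Sigma \subset \Sf$, together with a delicate stability/Jacobi-field computation showing that the index form cannot be nonnegative in these dimensions. The flatness/$\eps$-regularity theorem also requires genuine work (Caccioppoli-type tilt-excess estimates, harmonic approximation), but it is essentially a quantitative perturbation result once monotonicity is in hand. The sharpness of the $n-8$ bound, via the Bombieri–De Giorgi–Giusti Simons cone $\{x_1^2+\cdots+x_4^2 < x_5^2+\cdots+x_8^2\}$ in $\R^8$, is not needed for the statement itself but shows that no further improvement is possible.
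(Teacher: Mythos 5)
Your outline is the standard De Giorgi--Federer--Simons regularity program (reduced boundary and structure theorem, monotonicity and blow-up to minimizing cones, $\eps$-regularity/flatness plus bootstrap, Simons' classification for $n\le 7$, Federer dimension reduction for the $n-8$ bound), which is exactly the route taken in the literature this note relies on (e.g.\ \cite{MinGiusti}); the paper itself states the theorem without proof, so there is nothing to contrast beyond that. Your sketch correctly identifies the genuinely hard inputs (Simons' theorem and the flatness theorem) rather than glossing over them, and the closedness of the singular set via openness of the flatness condition is handled correctly.
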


In other words, minimal surfaces are smooth for $n\leq 7$ (and they are actually analytical). In $\R^8$, there exist minimal surfaces with singular points. A well known example is Simons cone, which is a minimal cone (with a singularity in the origin): 
\[ \mathcal S=\big\{x=(x,y)\in \R^4\times \R^4  \; \big| \; |x|=|y| \big\}.\] 
   
%
%
   \subsection{Minimal graphs}
   
   In the first part of this Section, we have introduced the perimeter operator and have discussed some essential properties of the following problem.
   
\begin{problem} \label{pb1} Given $\Omega \subset \Rn$ a bounded open set, $\B$ an open ball such that $\bar \Omega \subset \B$ and $E_0 \subset \B \setminus \Omega$ a set of finite perimeter, find 
 \[ \min \big\{ P(E,\B) \; \big| \; P(E,\B)<\infty,  E=E_0 \mbox{ in } \B \setminus \Omega\big\} .\] 
\end{problem} 

  A special case of minimal sets that we look for are minimal subgraphs, case in which the minimal surfaces are called minimal graphs.  We recall the space of Lipschitz continuous functions, denoted by $C^{0,1}(\Omega)$, defined for some open set $\Omega \subset\Rn$ by continuous functions with finite Lipschitz constant
\[ [u]_{C^{0,1}(\Omega)} =\sup_{x,y\in \Omega, x\neq y}\frac{|u(x)-u(y)|}{|x-y|}.\]

  The problem of looking for minimal graphs  in $C^{0,1}(\Omega)$ can be stated as follows.\\

\begin{problem} \label{pb2} Given $\Omega \subset \Rn$ a bounded open set with Lipschitz continuous boundary, and fixing $\varphi$ smooth enough on $\partial \Omega$, find $u\in C^{0,1}(\Omega)$ that realizes 
 \[ \min_{u=\varphi  \mbox{ on } \partial \Omega} \A(u,\Omega),\]
 where $\A$ is the area operator, defined as
 \eqlab{ \label{are} \A(u,\Omega)= \int_{\Omega} \sqrt{1+|Du|^2}\, dx.}
\end{problem}
 
 Notice that the area operator is well defined for $u\in C^{0,1}(\Omega)$.
%

 Existence and uniqueness (given that the area functional is convex) can be proved in the following context (see \cite[Theorem 12.10]{MinGiusti}).
 \begin{theorem}
 Let $\Omega$ be a bounded open set with $C^2$ boundary of non-negative mean curvature, and $\varphi \in C^2(\Rn)$. Then Problem \ref{pb2} is uniquely solvable in $C^{0,1}(\Omega)$. 
 \end{theorem}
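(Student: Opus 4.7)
The plan splits into a quick uniqueness step based on strict convexity of the area functional, and a more delicate existence argument based on a priori $C^{1,\alpha}$ estimates combined with the method of continuity.

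For \emph{uniqueness}, I would observe that the integrand $F(p)=\sqrt{1+|p|^2}$ is strictly convex in $p\in\Rn$, so $\A(\cdot,\Omega)$ is strictly convex on the affine subset of $C^{0,1}(\Omega)$ functions whose trace on $\partial\Omega$ equals $\varphi$. If $u_1,u_2$ were two minimizers then
\bgs{\A\!\left(\frac{u_1+u_2}{2},\Omega\right) \le \frac{1}{2}\A(u_1,\Omega)+\frac{1}{2}\A(u_2,\Omega),}
with strict inequality unless $Du_1=Du_2$ almost everywhere; hence $u_1-u_2$ is a constant that must vanish because of the common boundary data.

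For \emph{existence}, I would run the method of continuity for the minimal surface operator
\bgs{Q(u):= \div\!\left(\frac{Du}{\sqrt{1+|Du|^2}}\right).}
Consider the family of Dirichlet problems $Q(u^t)=0$ in $\Omega$ with $u^t=t\varphi$ on $\partial\Omega$ for $t\in[0,1]$. The set $T\subset[0,1]$ of parameters for which a classical solution $u^t\in C^{2,\alpha}(\bar\Omega)$ exists is non-empty (since $t=0$ gives $u^0\equiv 0$); openness of $T$ follows from the implicit function theorem, because the linearization of $Q$ at a smooth solution is uniformly elliptic in divergence form with $C^\alpha$ coefficients; and closedness of $T$ reduces, via Schauder and De Giorgi--Nash--Moser theory, to a $t$-uniform bound on $\|u^t\|_{C^1(\bar\Omega)}$. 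An $L^\infty$ bound follows from the maximum principle applied to $u^t-t\varphi$, and the classical interior gradient estimate of Bombieri--De Giorgi--Miranda then upgrades it to an interior $C^1$ bound; what is missing is a uniform bound for $|Du^t|$ on $\partial\Omega$.

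The \emph{hard step} is precisely this boundary gradient estimate, and it is where the hypothesis of non-negative mean curvature of $\partial\Omega$ enters. For each $x_0\in\partial\Omega$, I would construct local upper and lower barriers of the form $w^\pm(x)=\varphi(x)\pm\psi(d(x,\partial\Omega))$ on a one-sided tubular neighborhood of $\partial\Omega$, with $d$ the distance function and $\psi$ a suitable concave profile. Expanding $Q(w^\pm)$, the leading-order term involves the mean curvature of the level set $\{d=s\}$, which to first order equals the mean curvature of $\partial\Omega$: non-negativity of the latter is exactly what permits $\psi$ to be chosen so that $w^+$ (resp.\ $w^-$) is a supersolution (resp.\ subsolution) of $Q=0$ dominating (resp.\ dominated by) $u^t$ on the boundary of the neighborhood, while matching $\varphi$ on $\partial\Omega$. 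Comparison then forces $|Du^t(x_0)|\le C(\Omega,\|\varphi\|_{C^2})$ uniformly in $t$, $T$ closes up to $[0,1]$, and $u^1$ solves Problem \ref{pb2}; the uniqueness already proved identifies it as the unique minimizer in $C^{0,1}(\Omega)$.
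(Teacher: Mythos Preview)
Your proposal is correct and follows the classical route to this result. Note, however, that the paper does not supply its own proof of this theorem: it is stated with a reference to \cite[Theorem 12.10]{MinGiusti} and nothing more, as befits a survey. The argument you sketch---uniqueness from strict convexity of the integrand, existence via the method of continuity reduced to a uniform $C^1$ bound, with the boundary gradient estimate obtained through barriers built on the signed distance function and hinging on the sign of the mean curvature of $\partial\Omega$---is precisely the strategy developed in that reference (and, ultimately, the one going back to Jenkins--Serrin and Serrin). So there is nothing to compare: your outline \emph{is} the standard proof that the paper is pointing to.

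One small remark on presentation: the $L^\infty$ bound is more cleanly stated by applying the weak maximum principle directly to $u^t$ (solutions of $Q(u)=0$ attain their extrema on $\partial\Omega$), rather than to $u^t-t\varphi$, since $t\varphi$ is not in general a solution or sub/supersolution of the minimal surface equation. This is cosmetic and does not affect the validity of your argument.
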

 
  Tools of regularity of nonlinear partial differential equations in divergence form allow then to go from Lipschitz to analyticity in the interior and, in the hypothesis of the above theorem, to $C^2(\bar \Omega)$, settling the question on regularity of minimizers of  Problem \ref{pb2} (see \cite[Theorem 12.11, 12.12]{MinGiusti}). 
 
 \begin{theorem}
 Let $u \in C^{0,1}(\Omega)$ a solution of Problem \ref{pb2}. Then $u$ is analytic in $\Omega$. If moreover, $\partial \Omega$ and $\varphi$ are of class $C^{k,\alpha}$, with $k\geq 2$, then $u\in C^{k,\alpha}(\bar \Omega)$.  
 \end{theorem}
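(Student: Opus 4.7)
The strategy is to exploit the fact that $u$, being Lipschitz, turns the degenerate-type minimal surface equation into a uniformly elliptic equation, and then to run the standard regularity machinery for nonlinear elliptic PDEs in divergence form.

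First I would compute the Euler--Lagrange equation for $\A$. Since $u$ is a minimizer and $\A$ is differentiable on $C^{0,1}(\Omega)$, perturbing $u$ by $t\psi$ with $\psi \in C_c^\infty(\Omega)$ gives the weak form of the minimal surface equation,
\[
\int_\Omega \frac{Du \cdot D\psi}{\sqrt{1+|Du|^2}}\, dx = 0 \qquad \forall \psi \in C_c^\infty(\Omega).
\]
Because $u\in C^{0,1}(\Omega)$, we have $|Du|\leq L$ almost everywhere, so the vector field $a(p):=p/\sqrt{1+|p|^2}$ is evaluated only on $\{|p|\leq L\}$, where $Da(p)$ is uniformly positive definite and bounded. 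Hence $u$ is a weak solution of a uniformly elliptic nonlinear equation in divergence form, with smooth (actually analytic) structure function $a$.

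Next I would upgrade regularity by the classical bootstrap. Using difference quotients $\tau_h u=(u(x+he_i)-u(x))/h$ as test functions (after suitably multiplying by cut-offs) yields $u\in W^{2,2}_{\loc}(\Omega)$. Differentiating the equation in $x_i$ shows that each $v:=\partial_i u$ is a weak solution of a linear uniformly elliptic equation with bounded measurable coefficients $A_{jk}(x) = \partial_{p_j} a_k(Du(x))$. The De Giorgi--Nash--Moser theorem then yields $v\in C^{0,\alpha}_{\loc}(\Omega)$, i.e., $u \in C^{1,\alpha}_{\loc}(\Omega)$. Once the coefficients $A_{jk}(x)$ become Hölder continuous, interior Schauder estimates applied to the linearized equation give $u\in C^{2,\alpha}_{\loc}$, and iterating (since $a$ is analytic in $p$) promotes $u$ to $C^\infty_{\loc}$. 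Analyticity in $\Omega$ then follows from Morrey's theorem on analyticity of smooth solutions to nonlinear elliptic equations with analytic structure (see, e.g., \cite{MinGiusti}).

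For the boundary statement, I would flatten $\partial\Omega$ locally via a $C^{k,\alpha}$ diffeomorphism, so the problem becomes a similar quasilinear elliptic equation on a half-ball with $C^{k,\alpha}$ Dirichlet data $\varphi$. The Lipschitz bound up to the boundary (which is the delicate output of the existence theorem using the non-negative mean curvature assumption on $\partial\Omega$) ensures uniform ellipticity up to $\partial\Omega$ as well. Hence boundary versions of De Giorgi--Nash--Moser and of Schauder estimates apply: first one obtains $u\in C^{1,\alpha}(\overline\Omega)$, then the coefficients of the linearized equation are $C^{\alpha}$ up to the boundary and the boundary data is $C^{k,\alpha}$, so boundary Schauder iteration yields $u \in C^{k,\alpha}(\overline\Omega)$.

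The main obstacle, conceptually, is the very first upgrade from Lipschitz to $C^{1,\alpha}$: one must justify the difference quotient argument carefully and invoke the non-trivial De Giorgi--Nash--Moser theorem to break the nonlinear barrier. Once $C^{1,\alpha}$ is in hand, the rest is a standard Schauder and Morrey-analyticity bootstrap, and the boundary version is technical but follows the same pattern after flattening and exploiting the hypotheses on $\partial\Omega$ and $\varphi$.
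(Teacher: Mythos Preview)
Your proposal is correct and follows precisely the approach the paper indicates: the paper does not give a proof of this theorem but merely cites \cite[Theorems 12.11, 12.12]{MinGiusti}, remarking that ``tools of regularity of nonlinear partial differential equations in divergence form allow then to go from Lipschitz to analyticity in the interior'' and to $C^{k,\alpha}(\bar\Omega)$ at the boundary. Your outline---uniform ellipticity from the Lipschitz bound, De Giorgi--Nash--Moser for the $C^{1,\alpha}$ step, Schauder bootstrap, Morrey's analyticity theorem, and the corresponding boundary versions after flattening---is exactly the content of those cited results.
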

 
We stress out that in order to ensure existence of a solution of Problem \ref{pb2}, the condition that the mean curvature of $\partial \Omega$ is nowhere negative is necessary. We  provide here \cite[Example 12.15]{MinGiusti} (see also \cite[Example 1.1]{GiustiDirect}, \cite[Section 2.3]{giaq }) showing that for a domain whose boundary is somewhere non-positive, the solution may not exist, or may not be regular up to the boundary. The following example is depicted in Figure \ref{stt}.

\begin{example}\label{msstick}
Let $0<\rho<R$, $M>0$ be fixed, and let $A^R_\rho$ be the annulus
\[ A_R^\rho =\big\{ x\in \R^2  \; \big| \; \rho<|x|<R\big\}.\]
Define $\varphi $ on the boundary of $A^{\rho}_R$ as
\sys[\varphi(x)=]{& 0, && \mbox{ for } |x|=R
				\\	
				&M, && \mbox{ for } |x|=\rho.}
If $u$ is a minimum for the area in $A_R^{\rho}$, then the spherical average of $u$
\[ v(r) := \frac{1}{2\pi} \int_0^{2\pi} u(r,\theta) \, d\theta \]
decreases the area. Indeed, given the strict convexity of the area functional by, Jensen's inequality one gets that
 \[ \A(v,A_R^\rho)<\A(u,A_R^\rho).\] 
 This implies that the minimum must be radial, i.e. $u=u(r)$.
The area functional can then be written as
\[ F(u)= 2\pi \int_{\rho}^R r\sqrt{1+(u'(r))^2}\, dr,\] 
with Euler-Lagrange equation implying that $ru'/\sqrt{1+u'^2}$ is a constant, hence
\[ \frac{r u'(r)}{ \sqrt{1+(u'(r))^2}} = -c,\]
with $c\in[0, \rho] $ (positive since $u$ is non-increasing in $r$)  to be determined using the boundary conditions. 
The ODE, combined with $u(R)=0$, has the unique solution
\[ u(r)= c \log \frac{\sqrt{R^2-c^2} +R }{\sqrt{r^2-c^2} +r}.\]
One notices that the map
\[ f(c):=c \log \frac{\sqrt{R^2-c^2} +R }{\sqrt{\rho^2-c^2} +\rho}\] is non-decreasing in $[0,\rho]$, thus 
\[ \sup_{0\leq c\leq \rho} u(\rho)  = \sup_{c\in [0,\rho)} f(c)= \rho \log \frac{\sqrt{R^2-\rho^2} + R }{\rho}:=M_0,\]
with $M_0=M_0(R,\rho)$. 
However, by boundary conditions, one should have $u(\rho)=M$, thus a solution exists if only if $M_0\geq M$. Furthermore, notice that
\begin{itemize}
\item if $M_0 < M$, Problem \ref{pb2} does not have a solution;
\item if $M_0=M$,  thus when
\eqlab{\label{12} u(r)= \rho \log \frac{\sqrt{R^2-\rho^2} + R }{\sqrt{r^2-\rho^2} +r } }
we have that
\[ \lim_{r\searrow \rho} |u'(r)| =\infty,\]
implying that $u$ is not smooth up to the boundary.
\end{itemize}
  
\end{example}

     \medskip
          
   Taking into account  Example \ref{msstick}, we see that looking for a  minimum in $C^{0,1}(\Omega)$ can lead to a problem without any classical solution.  Another formulation can be considered for Problem \ref{pb2}, which  for the existence does not require non-negative mean curvature of $\Omega$ and relaxes the condition on the boundary data.  As with general sets, one works in the space of functions of bounded variation.
     For $u \in BV(\Omega)$, the area functional is defined as
     \eqlab{ \label{fff} \A(u,\Omega) = \sup_{g \in C_c^{\infty}(\Omega, \R^{n+1}) , |g|\leq 1} \int_{\Omega} g_{n+1} + u\, \div g \, dx,
     }
     with $\div g= \sum_{i=1}^n \partial_i g_i(x)$.
   Notice that for $u\in C^{0,1}(\Omega)$, Definition \ref{are} is recovered. 
     
With definition \eqref{fff}, the problem can be considered in this way (see \cite[14.4]{MinGiusti}).
     
       \begin{problem}\label{pb3}
   Let $\Omega\subset \Rn$ be a bounded open set, $\mathcal B $ be an open ball containing $\bar \Omega$ and let $\varphi \in W^{1,1}(\B \setminus \Omega)$. Find
   \[ \min	\big\{ \A(u,\B) \; \big| \; u\in BV(\mathcal B), u=\varphi \mbox{ in } \B\setminus \bar \Omega \big\} .\]
   \end{problem}
   
  Problem \ref{pb3} can be reformulated. Notice that
   \eqlab{ \label{af} \A(u,\B) = &\; \A(u,\Omega) + \A(u,\B \setminus \bar \Omega) + \int_{\partial \Omega} |u-\varphi| \, d \H^{n-1} 
   		\\
   		=&\; \A (u,\Omega) + \A( \varphi, \B \setminus \bar \Omega) + \int_{\partial \Omega} |u-\varphi| \, d\H^{n-1}.
   }
   Since $\varphi$ is fixed outside of $\Omega$, minimizing $u $ in $\B$ with exterior data $\varphi$ boils down to minimizing both the area  of $u$ in $\Omega$ and the area along the vertical wall $\partial \Omega \times \R$, lying between the graph of $\varphi$ and  $u$.
       The existence for any smooth set $\Omega$ is settled in the next Theorem, see \cite[Theorem 14.5]{MinGiusti}.

   \begin{theorem} For $\Omega$ with Lipschitz continuous boundary, there exists a solution of Problem \ref{pb3}.
   \end{theorem}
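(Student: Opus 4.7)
The plan is to apply the direct method in the calculus of variations, exploiting the relaxed definition \eqref{fff} of the area functional together with the compactness and lower semi-continuity theory for $BV$. First, I would show that the infimum is finite by producing an admissible competitor: since $\varphi \in W^{1,1}(\B \setminus \Omega)$ and $\partial\Omega$ is Lipschitz, $\varphi$ admits a trace on $\partial \Omega$ and an extension to all of $\B$ belonging to $W^{1,1}(\B) \subset BV(\B)$, so the corresponding area is finite. This both yields an admissible $u_0$ and shows the class of competitors is nonempty.

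Next I would take a minimizing sequence $\{u_k\} \subset BV(\B)$ with $u_k = \varphi$ in $\B \setminus \bar\Omega$ and derive uniform $BV(\B)$ bounds. From the definition \eqref{fff}, testing against $g = (0,\dots,0,g_{n+1})$ and against $g = (\tilde g,0)$ one obtains both $\|u_k\|_{L^1(\B)}$ control (via the fixed boundary data on $\B\setminus\bar\Omega$ and the boundedness of $\B$) and control of the total variation $[u_k]_{BV(\B)}$. More precisely, the inequality $\A(u,\B) \ge [u]_{BV(\B)}$ is immediate from \eqref{fff}, while the $L^1$ bound in $\Omega$ follows from the trace-type inequality $\|u\|_{L^1(\Omega)} \le C\bigl([u]_{BV(\B)} + \|\varphi\|_{L^1(\B\setminus\Omega)}\bigr)$ available on Lipschitz domains. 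Together with the bound on $\A(u_k,\B)$ from the minimizing property, this gives a uniform bound in $BV(\B)$.

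By the compact embedding $BV(\B) \hookrightarrow L^1(\B)$ valid since $\B$ is a bounded Lipschitz set, I extract a subsequence (not relabelled) converging in $L^1(\B)$ and almost everywhere to some $u \in BV(\B)$. The constraint $u = \varphi$ in $\B \setminus \bar\Omega$ passes to the limit by $L^1$-convergence. Finally, the area functional in the form \eqref{fff} is a supremum of continuous linear functionals on $L^1(\B)$, hence is lower semi-continuous with respect to $L^1$-convergence:
\[
\A(u,\B) \;\le\; \liminf_{k\to\infty} \A(u_k,\B) \;=\; \inf\{\A(v,\B) : v \in BV(\B),\, v = \varphi \text{ in } \B\setminus\bar\Omega\},
\]
so $u$ is a minimizer.

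The main technical obstacle I expect is the treatment of the boundary term along $\partial\Omega$: a naive formulation of the problem based on \eqref{are} would force one to impose $u = \varphi$ on $\partial\Omega$ in the trace sense, which is not preserved under weak $BV$/$L^1$ limits and can fail in the limit (precisely the stickiness-type failure illustrated by Example \ref{msstick}). The reformulation \eqref{af}, which absorbs the trace jump $\int_{\partial\Omega} |u-\varphi|\, d\H^{n-1}$ into the functional, is exactly what makes lower semi-continuity compatible with the relaxed boundary condition, and this is the key structural point of the argument.
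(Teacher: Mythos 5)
Your argument is correct and follows essentially the same route as the proof the paper relies on (the direct method of \cite[Theorem 14.5]{MinGiusti}): produce a finite-energy competitor by extending $\varphi\in W^{1,1}(\B\setminus\Omega)$ across the Lipschitz boundary, bound a minimizing sequence in $BV(\B)$ via $\A(u,\B)\geq [u]_{BV(\B)}$ and a Poincar\'e-type inequality using the fixed data on $\B\setminus\bar\Omega$, pass to an $L^1$-convergent subsequence, and conclude by lower semi-continuity of the relaxed functional \eqref{fff}, with the constraint $u=\varphi$ in $\B\setminus\bar\Omega$ preserved under a.e.\ convergence. Your closing remark correctly identifies the structural point: the relaxed formulation \eqref{af}, which penalizes the trace jump on $\partial\Omega$ rather than imposing it, is exactly what makes the direct method close.
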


\begin{remark}  \label{nmg} Notice the resemblance of Problem \ref{pb3} with Problem \ref{pb1}. The similitude does not stop at the way the problem is defined: for sets that are graphs, the two formulations are actually equivalent. This follows after some considerations:
   \begin{enumerate}
   \item  defining the subgraph of $u\in BV(\Omega)$ as
   \[ Sg (u,\Omega)= \big\{ (x,x_{n+1}) \in \Omega\times \R \subset \R^{n+1}\; \big| \; x_{n+1}<u(x)\big\},\]
   it holds that
   \[ \A(u,\Omega)=P(Sg (u,\Omega), \Omega \times \R),\]
   \item given a set $F$ in a cylinder, then the perimeter decreases by replacing $F$ by a suitable subgraph, obtained with a ``vertical rearrangement'' of the set $F$(check \cite[Lemma 5.1]{TeoAlessio}, \cite[Lemma 14.7, Theorem 14.8]{MinGiusti}).  
\item  observe that the domain in which we minimize the perimeter in the class of subgraphs is unbounded, so additional care is needed to deal with local minimizers (we say that $u$ is a local minimizer in $\Omega$ if it minimizes the functional in any set compactly contained in $\Omega$). 
        \end{enumerate}
   \end{remark}      
In particular, finding a minimal graph is equivalent to finding a local minimizer of the perimeter in the class of subgraphs (\cite[Theorem 14.9]{MinGiusti}). 
Precisely:
\begin{theorem}
Let $u\in BV_{\loc}(\Omega)$ be a local minimum for the area functional. Then $Sg(u,\Omega)$ minimizes locally the perimeter in $\Omega\times \R$.
\end{theorem}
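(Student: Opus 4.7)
The plan is to reduce the claim to the two facts recorded in Remark \ref{nmg}: namely (i) the identity $\A(u,\Omega)=P(Sg(u,\Omega),\Omega\times\R)$, and (ii) the vertical rearrangement principle, which replaces an arbitrary competitor in a cylinder by a subgraph without increasing the perimeter.

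First I would fix a bounded open set $\Omega'\subset\subset\Omega$ (with Lipschitz boundary, say) and a competitor $F\subset\R^{n+1}$ such that $F$ coincides with $Sg(u,\Omega)$ outside a compact set $K\subset\Omega'\times\R$. Because $Sg(u,\Omega)$ contains the half-cylinder $\Omega\times(-\infty,-T)$ for $T$ large and is disjoint from $\Omega\times(T,\infty)$ outside the compact set where $u$ is bounded (which we may arrange locally since $u\in BV_{\loc}$ is locally integrable and hence essentially bounded on compact subsets after truncating by constants, an issue I return to below), one can enclose $K$ in a bounded cylinder $\Omega'\times(-T,T)$ and work there.

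Next I would apply the vertical rearrangement (item (ii) of Remark \ref{nmg}) to $F$, producing a function $v\in BV(\Omega')$ whose subgraph $Sg(v,\Omega')$ satisfies
\[
P\bigl(Sg(v,\Omega'),\Omega'\times\R\bigr)\le P(F,\Omega'\times\R).
\]
Since the rearrangement is performed slice by slice in the vertical direction and $F=Sg(u,\Omega)$ outside $K$, the resulting $v$ equals $u$ on $\Omega'\setminus\pi(K)$, where $\pi$ is the projection onto $\Rn$; in particular $v=u$ near $\partial\Omega'$. Using identity (i) applied both to $u$ and to $v$, I obtain
\[
\A(v,\Omega')=P\bigl(Sg(v,\Omega'),\Omega'\times\R\bigr)\le P(F,\Omega'\times\R),
\qquad
\A(u,\Omega')=P\bigl(Sg(u,\Omega),\Omega'\times\R\bigr).
\]
Since $u$ is a local minimum of $\A$ in $\Omega$ and $v=u$ outside a compact subset of $\Omega'$, the local minimality gives $\A(u,\Omega')\le\A(v,\Omega')$. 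Chaining the inequalities yields $P(Sg(u,\Omega),\Omega'\times\R)\le P(F,\Omega'\times\R)$, which is exactly local minimality of the subgraph.

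The main obstacle is the one I already flagged: the cylinder $\Omega\times\R$ is unbounded in the vertical direction, so I have to make sure that the vertical rearrangement is well defined and that $v$ indeed lies in $BV(\Omega')$ with $v=u$ near $\partial\Omega'$. This is handled by the explicit construction of Lemma~14.7/Theorem~14.8 of \cite{MinGiusti}: one intersects $F$ with a large slab $\Omega'\times(-T,T)$ containing $K$, rearranges the slices, and observes that outside $K$ the rearrangement reproduces $Sg(u,\Omega)$ exactly, so no perimeter is gained on the boundary of the slab. Once this technical point is addressed, the rest of the argument is a direct combination of (i) and (ii) with the definition of local minimality.
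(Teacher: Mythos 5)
Your argument is essentially the proof the paper points to: the statement is quoted from \cite[Theorem 14.9]{MinGiusti} and is not reproved here, and the intended argument is exactly your combination of the ingredients in Remark \ref{nmg} — the identity $\A(u,\Omega')=P(Sg(u,\Omega'),\Omega'\times\R)$ plus the vertical rearrangement lemma, applied after localizing in a bounded cylinder and noting that the rearranged competitor coincides with $u$ near $\partial\Omega'$, so that local minimality of $u$ for $\A$ closes the chain of inequalities. One inessential slip: your parenthetical that $u\in BV_{\loc}(\Omega)$ is essentially bounded on compact subsets is false in general for $n\ge 2$, but it is also not needed, since a competitor for local perimeter minimality differs from $Sg(u,\Omega)$ only inside a compact subset of $\Omega\times\R$, which is automatically contained in some bounded cylinder $\Omega'\times(-T,T)$ without any boundedness of $u$.
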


Since for graphs Problem \ref{pb1} and Problem \ref{pb3} are equivalent, regularity of general minimal surfaces applies to minimal graphs. Actually, purely functional techniques are used to prove that minimal graphs are smooth in any dimension \cite[Theorem 14.13]{MinGiusti}.

\begin{theorem}
Let $u\in BV_{\loc}(\Omega)$ locally minimize the area functional. Then $u$ is analytical inside $\Omega$.
\end{theorem}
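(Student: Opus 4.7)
The plan is to exploit the equivalence, established in the preceding theorem, between a local minimizer $u$ of the area functional and its subgraph $E := Sg(u,\Omega)$ viewed as a local perimeter minimizer in $\Omega \times \R \subset \R^{n+1}$. Applying the general regularity result (Theorem \ref{minreg}) in dimension $n+1$ already yields that $\partial E$ is analytic outside a closed singular set of Hausdorff dimension at most $(n+1)-8=n-7$. The whole point of the theorem is that for graphs this singular set is \emph{empty}, so that smoothness of $u$ follows regardless of the dimension.

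Following the functional strategy alluded to by the author (\cite[Theorem 14.13]{MinGiusti}), the argument proceeds in three steps. First, one establishes an interior a priori Lipschitz bound for $u$: on any $\Omega' \Subset \Omega$ one has $\|Du\|_{L^\infty(\Omega')} \le C$, with $C$ depending only on $\|u\|_{L^1(\Omega)}$ and the geometry. This is the celebrated Bombieri--De Giorgi--Miranda gradient estimate, typically proved by combining the maximum principle for the minimal surface operator with a De Giorgi-type Harnack inequality. Second, one interprets the minimal surface equation
\[
\div\!\left( \frac{Du}{\sqrt{1+|Du|^2}} \right) = 0
\]
on compact subsets of $\Omega$ as a uniformly elliptic quasilinear equation in divergence form with bounded measurable coefficients; the De Giorgi--Nash--Moser theorem then gives H\"older regularity of $Du$, so $u \in C^{1,\alpha}_{\loc}(\Omega)$. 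Third, one iterates Schauder estimates to bootstrap $u$ to $C^{k,\alpha}_{\loc}$ for every $k$; analyticity of the nonlinearity $p \mapsto p/\sqrt{1+|p|^2}$ then propagates to $u$ via Morrey's classical theorem on analyticity for solutions of analytic elliptic equations.

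The main obstacle is the first step: the Bombieri--De Giorgi--Miranda Lipschitz estimate is where the geometric structure of the area functional is decisive and where one actually defeats the singular set allowed by Theorem \ref{minreg}. An alternative, more geometric route would rule out that singular set directly via Federer's dimension reduction: any tangent cone at a hypothetical singular point of $\partial E$ is, by vertical monotonicity of the subgraph, itself the subgraph of a $1$-homogeneous function, and a Bernstein-type rigidity for entire minimal graphical cones forces it to be a hyperplane, a contradiction. Either way, once the key a priori bound (or absence of singularities) is secured, the bootstrap from Lipschitz regularity to full analyticity is entirely standard elliptic theory.
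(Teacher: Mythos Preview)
Your outline is correct and matches what the paper does: the paper gives no proof of its own here but simply cites \cite[Theorem 14.13]{MinGiusti} and notes that ``purely functional techniques'' yield smoothness in every dimension. Your three-step sketch (Bombieri--De Giorgi--Miranda gradient bound, De Giorgi--Nash--Moser for $C^{1,\alpha}$, Schauder bootstrap plus Morrey for analyticity) is exactly that functional route, so you are in full agreement with the paper's intended argument.
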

 
 On the other hand, looking at boundary regularity, \cite[Theorem 15.9]{MinGiusti} states that:
 
 \begin{theorem}\label{regmg}
 Let $\Omega\in \Rn$ be a bounded open set with $C^2$ boundary, and let $u$ solve Problem \ref{pb3}. Suppose that $\partial \Omega$ has non-negative mean curvature near $x_0$ and that $\varphi$ is continuous at $x_0$. Then
 \[ \lim_{x\to x_0} u(x)= \varphi(x_0) .\]
 \end{theorem}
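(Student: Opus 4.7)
My approach follows the classical barrier method for boundary regularity of the minimal surface equation, adapted to the BV formulation of Problem \ref{pb3}. By a vertical translation, I may assume $\varphi(x_0)=0$. Fix $\eps>0$: the continuity of $\varphi$ at $x_0$ gives $r_0>0$ with $|\varphi(x)|<\eps$ for every $x\in\partial\Omega\cap B_{r_0}(x_0)$, and after shrinking $r_0$ the mean curvature of $\partial\Omega$ is non-negative throughout $\partial\Omega\cap B_{r_0}(x_0)$.

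Next I construct an upper barrier $w^+$, namely a classical smooth minimal graph on a $C^2$ subdomain $D\subset\Omega\cap B_{r_0}(x_0)$, obtained by rounding the corners of $\Omega\cap B_{r_0}(x_0)$ so that $\partial D$ contains a relative neighborhood $\Gamma\subset\partial\Omega$ of $x_0$ and retains non-negative mean curvature near $\Gamma$. By the existence theorem for Problem \ref{pb2}, I solve the minimal graph equation on $D$ with boundary datum equal to $\eps$ on $\Gamma$ and equal to a large constant $M$ on $\partial D\setminus\Gamma$, where $M$ exceeds the essential supremum of $u$ on $D\setminus\overline{B_{r_0/2}(x_0)}$; the resulting $w^+$ is smooth up to $\partial D$ with $w^+(x_0)=\eps$. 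A symmetric construction yields a lower barrier $w^-$ with $w^-(x_0)=-\eps$.

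The comparison $w^-\leq u\leq w^+$ in $D$ is cleanest in the subgraph formulation of Remark \ref{nmg}. Since both $Sg(u,\Omega)$ and $Sg(w^+,D)$ are local perimeter minimizers in their cylinders, the inequality $u\leq w^+$ in $D$ follows from a cut-and-paste argument: replacing $Sg(u,\Omega)\cap(D\times\R)$ by $Sg(u,\Omega)\cap Sg(w^+,D)$ produces an admissible competitor whose perimeter in $\mathcal B\times\R$ is no larger, and the identity \eqref{af}, combined with $w^+\geq\varphi$ on $\Gamma$, forces $u\leq w^+$ almost everywhere in $D$. A symmetric argument gives $u\geq w^-$. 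Since $w^\pm$ are continuous up to $\partial D$ with $w^\pm(x_0)=\pm\eps$, we conclude $-\eps\leq\liminf_{x\to x_0}u(x)\leq\limsup_{x\to x_0}u(x)\leq\eps$, and the arbitrariness of $\eps$ yields the claim.

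The main obstacle is twofold. First, the domain $\Omega\cap B_{r_0}(x_0)$ has a corner along $\partial\Omega\cap\partial B_{r_0}(x_0)$, so one has to smooth it to a $C^2$ set $D$ while preserving the non-negative mean curvature on $\Gamma\subset\partial\Omega\cap\partial D$ in order to invoke the existence theorem for Problem \ref{pb2}; this is doable by a localized $C^2$ perturbation away from $\Gamma$ but requires care. Second, the comparison step must accommodate the fact that $u$ only possesses a BV trace on $\partial\Omega$ and can jump there (a phenomenon penalized but not forbidden by \eqref{af}), so the matching with $\varphi$ is integral rather than pointwise. Passing to subgraphs dissolves this difficulty, since the boundary penalty in \eqref{af} is automatically encoded in the perimeter of $Sg(u,\Omega)$ computed in the enlarged cylinder $\mathcal B\times\R$, reducing the comparison to a purely geometric cut-and-paste argument of the kind developed for Problem \ref{pb1}.
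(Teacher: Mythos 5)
The paper does not prove this statement; it is quoted from \cite[Theorem 15.9]{MinGiusti}, whose proof runs along the same barrier philosophy you adopt but with explicit barriers built from the distance function to $\partial\Omega$ (the non-negative mean curvature makes $d(\cdot,\partial\Omega)$ usable as a supersolution generator), combined with a preliminary local boundedness estimate for $u$ near the boundary and a weak comparison principle for the penalized functional \eqref{af}. Measured against that, your idea of manufacturing the barrier by solving an auxiliary classical Dirichlet problem on a rounded subdomain $D$ is a legitimate variant in spirit, but as written it has a genuine gap precisely at the point the classical proof takes care to settle first: the choice of the constant $M$ presupposes that $u$ is essentially bounded on $D$ up to the portion of $\partial D\setminus\Gamma$ that meets $\partial\Omega$ at the edge of $\Gamma$. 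The hypotheses give only $u\in BV(\mathcal B)$, $\varphi\in W^{1,1}(\mathcal B\setminus\Omega)$ and continuity of $\varphi$ at the single point $x_0$; interior analyticity controls $u$ only on compact subsets of $\Omega$, and no $L^\infty$ bound near $\partial\Omega$ is available without an extra argument (a truncation/maximum principle if $\varphi$ is bounded near the relevant boundary portion, or a De Giorgi-type local boundedness estimate as in Giusti). Without such a bound no finite $M$ places $w^+$ above $u$ on the lateral boundary of $D$, and the whole comparison collapses.

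Two further points need repair. First, the existence theorem you invoke for Problem \ref{pb2} requires non-negative mean curvature of \emph{all} of $\partial D$ and a boundary datum of class $C^2$; your datum ($\varepsilon$ on $\Gamma$, $M$ elsewhere) is discontinuous, and you only arrange the curvature condition ``near $\Gamma$''. Both are fixable (mollify the datum keeping it equal to $\varepsilon$ near $x_0$ and $\geq M$ on the inner part; smooth the corner of $\Omega\cap B_{r_0}(x_0)$ so that the transversal principal curvature dominates and the spherical cap is anyway convex), but they must be stated, since otherwise $w^+$ need not exist or be continuous up to $x_0$. Second, the cut-and-paste step is asserted in the wrong direction: minimality of $u$ for \eqref{af} only yields $\A(u,D)\leq\A(\min(u,w^+),D)$ after discarding the non-increasing penalty on $\Gamma$ and the (absent, if $M$ is admissible) wall on $\partial D\cap\Omega$; to conclude $u\leq w^+$ you must also use that $w^+$ is a classical solution, e.g.\ via the submodularity of the area functional applied to $\min(u,w^+)$ and $\max(u,w^+)$ with careful bookkeeping of the traces on $\partial D$, or via the calibration given by the vector field $\bigl(Dw^+,-1\bigr)/\sqrt{1+|Dw^+|^2}$. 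With a local boundedness lemma added and these two steps made precise, your scheme does deliver the theorem, but in its present form the key inequality $u\leq w^+$ is not established.
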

 
 The above theorem can actually be stated for domains $\Omega$ with Lipschitz boundary, by using a suitable notion of mean curvature. Also, notice that asking for non-negative mean curvature is more general than asking $\Omega$ to be convex. 
 
  A more attentive look at Theorem \ref{regmg} allows us to conclude that 
 in general, for continuous boundary data $\varphi$ and for  convex domains, the stickiness phenomena does not happen for minimal graphs. We will see that the situation dramatically changes for nonlocal minimal graphs. 
 
 On the other hand, looking at Example \ref{msstick}, one can provide an example of stickiness in non-convex domains.

\begin{example}\label{msstick1}
Let $0<\rho<R$, $M>0$ be fixed, and let $A^R_\rho$ be the annulus
\[ A_R^\rho =\big\{ x\in \R^2  \; \big| \; \rho<|x|<R\big\}.\] 
Define $\varphi $ as
\sys[\varphi(x)=]{& 0, && \mbox{ for } x\in \Co B_R
				\\	
				&M, && \mbox{ for } x\in \bar B_\rho,}
				and let $u(x)$ be the minimum of the area functional, defined by \eqref{12} as
\[ u(x)= \rho \log \frac{\sqrt{R^2-\rho^2} + R }{\sqrt{|x|^2-\rho^2} +|x| } .\]
Consider 
\sys[v(x):=]{&u(x) , && \rho \leq |x|\leq R\\
		& \varphi(x), && x \in B_{\rho}\cup \Co \bar B_R.}
	Notice that according to \eqref{af} we have that
	\bgs{ \A(v,B_{R+2})=&\; \A(v,A_R^\rho) + \A(v,  B_{R+2} \setminus \bar {A_R^\rho} ) + \int_{\partial A_R^\rho} |v-\varphi| \, d\H^{n-1}
	\\
	=&\; \A(u,A_R^\rho) 	+ \A(\varphi,  B_{R+2} \setminus \bar {A_R^\rho} ) + (M_0-M) \omega_n \rho^{n-1}.
	}
	Now, $u$ is a minimum  for the area in  $A_R^\rho$ (as shown in Example \ref{msstick}), the contribution of $\varphi$ is fixed, and $M_0$ is the highest possible value that $u$ can reach. This implies that $v$ is a solution of Problem \ref{pb3}. In this case, we notice that on $\partial B_\rho \times \R$ the solution $v$ sticks at the boundary, that $v$ is not continuous across the boundary, and the subgraph of $v$ has a vertical wall along the boundary of the cylinder in which we minimize. See Figure \ref{stt}.
		\end{example}
	
\begin{figure}[htpb] 
	\begin{multicols}{2}
    \includegraphics[width=\linewidth]{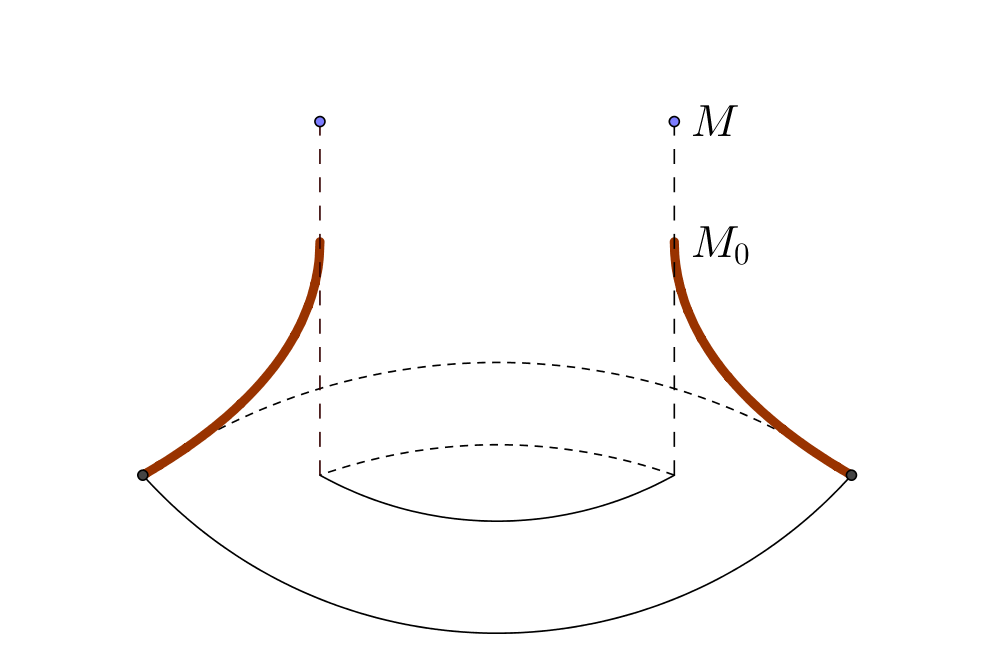}\par 	
    \includegraphics[width=\linewidth]{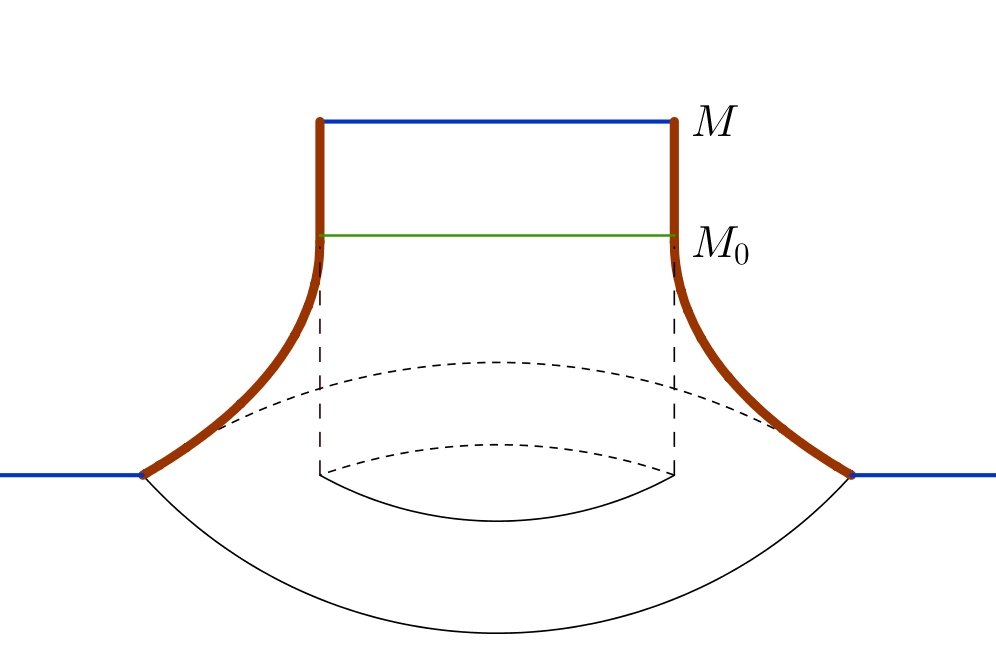}\par 	
    \end{multicols}
 \caption{The geometric construction in Examples \ref{msstick} and \ref{msstick1}}   
\label{stt}
	
\end{figure} 

\section{An introduction to nonlocal minimal surfaces}
 Justified by nonlocal phase transition problems and by imaging processing, one is led to introduce a nonlocal (and fractional version) of the perimeter. This was admirably accomplished in the seminal paper \cite{nms} by Caffarelli, Roquejoffre and Savin in 2010. The readers can check also the beautiful  and useful  review \cite{senonlocal}.\\
 Roughly speaking, one would like to have a definition of the nonlocal perimeter that takes into account long-range interactions between points in the set and in its complement, in the whole space, weighted by the their mutual distance. The goal is then to minimize such a perimeter in a domain $\Omega \subset \Rn$ among all competitors coinciding outside of $\Omega$, in a similar way to Definition \ref{min}. Notice now that in the nonlocal framework the data coming from far away plays a role, so the ``boundary'' data $E_0$ is given in the whole of $\Rn\setminus \Omega$ and the data even very distant from $\Omega$ gives a contribution. 
  
    To arrive at the definition of fractional perimeter introduced in \cite{nms}, one could start from \eqref{bvv}  and make use of a ``fractional counterpart'' of the $BV$ semi-norm. Notice that $ W^{1,1}(\Omega) \subset BV(\Omega)$, hence a good candidate turns out to be the {Gagliardo} $W^{s,1}$ semi-norm.
   For some given $s\in (0,1)$, we recall that for a measurable function $u\colon \Rn \to \R$ 
    \[ [u]_{W^{s,1}(\Omega) }=\int_{\Omega}\int_{\Omega} 
    \frac{ |u(x)-u(y)|}{|x-y|^{n+s}} \, dx \,dy.\]     
    Informally thus (because these quantities may well be infinite), the fractional perimeter is given by the $W^{s,1}$ semi-norm of the characteristic function of the set $E$
    \[ P_s (E,\Omega)= \frac12 \left([\chi_E]_{W^{s,1}(\Rn)} - [\chi_E]_{W^{s,1} (\Co \Omega)}\right).\]
    Of course, it would not be enough to take the $W^{s,1}$ semi-norm only in $\Omega$, because all far away information would be lost. Nonetheless, one excludes the interactions $\Co \Omega$ with $\Co \Omega$. This is due to the fact that in the minimization problem the data outside of the domain $\Omega$ is fixed, and so is that contribution. All in all, the fractional perimeter is defined as follows.
    
    \begin{definition}\label{pssss}
    Let $s\in (0,1)$ be fixed, $\Omega\subset \Rn$ be an open set and $E \subset \Rn$ be a measurable set. Then
    \[ P_s(E,\Omega)= \frac12\iint_{\R^{2n} \setminus (\Co \Omega)^2} \frac{|\chi_{E}(x) -\chi_E(y)|}{|x-y|^{n+s}} \, dx \, dy.\]
    \end{definition}
      In the above definition, notice that only the interactions between $E$ and its complement survive. Thus, denoting for two disjoint sets  $A, B \subset \Rn$
    \[ \L_s(A,B) =\int_A \int_B\frac{ dx \, dy }{|x-y|^{n+s}}  \] 
     we can write
       \eqlab{\label{pss} P_s(E,\Omega)= P_s^{L} (E,\Omega) + P_s^{NL} (E,\Omega),} 
     where we separate the  ``local'' and the ``nonlocal'' contributions to the perimeter (see Figure \ref{sp})
    \[  P_s^{L} (E,\Omega) := \L_s(E\cap \Omega,\Co E \cap \Omega),\]
         \[ P_s^{NL} (E, \Omega) := \L_s(\Co E\cap \Omega,E \cap \Co \Omega)  +\L_s (E\cap \Omega, \Co E \cap \Co \Omega).\] 
    \begin{figure}[htpb]
	\centering
	\includegraphics[width=0.8\textwidth]{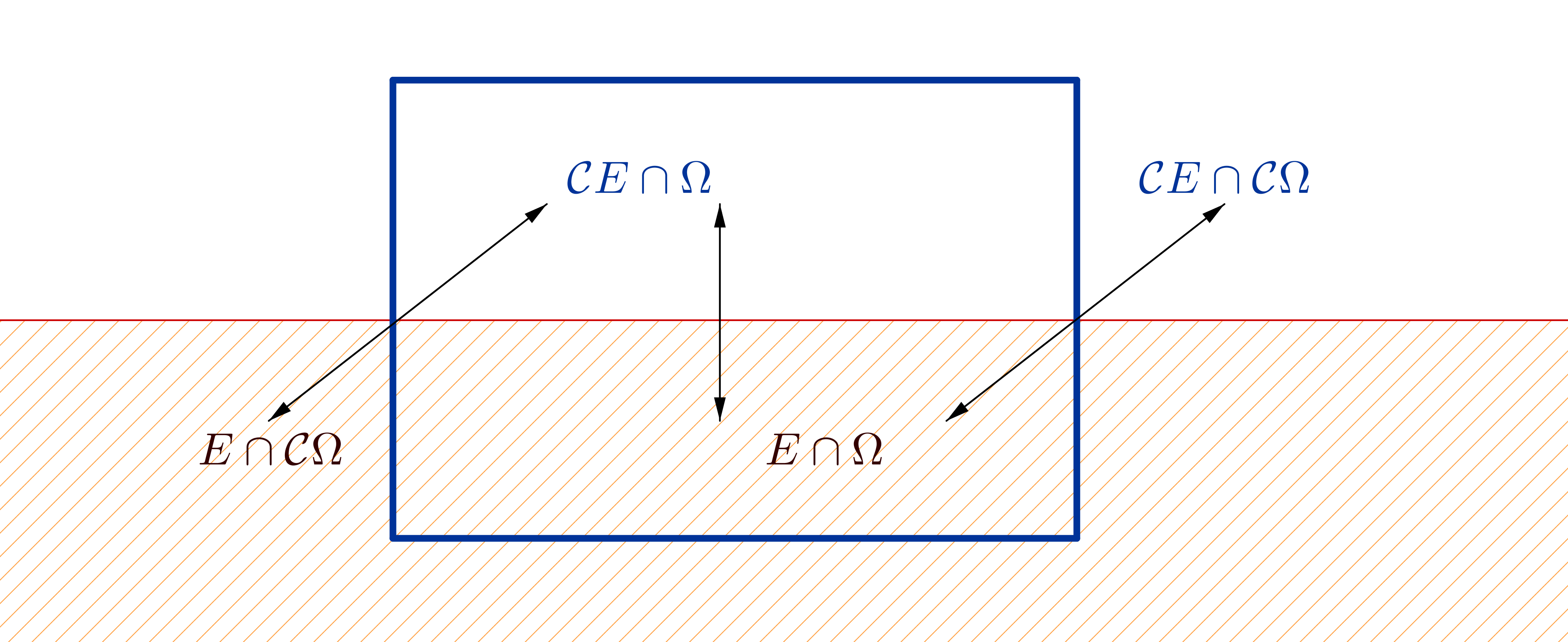}
	\caption{The contributions to the fractional perimeter}   
	\label{sp}
\end{figure}

     As a remark, it holds that
    \[ W^{1,1}(\Omega) \subset BV(\Omega)\subset \bigcap_{s\in (0,1)} W^{s,1}(\Omega), \]
    in particular if $E$ has finite perimeter, then it has finite fractional perimeter, for every $s\in(0,1)$ (on the other hand, the converse is not true).
    One notices that sending that $s\searrow 1$, the local perimeter comes up. This further justifies the fractional perimeter as a good generalization, in this sense, of the classical perimeter. As a matter of fact,  in \cite{uniform} the authors prove, under  local regularity assumptions on $\partial E$, that for  $s \nearrow 1$, the limit of
  $(1-s)P_s(E,B_1)$  goes to the classical $P(E,B_1)$ (the result in  the $\Gamma$-convergence sense is reached in \cite{DePhil}). The optimal result (in the pointwise sense) can be found in \cite[Theorem 1.6]{fractalLuk} (which is based on the previous\cite[Theorem 2]{BBM} and \cite[Theorem 1]{Davila}). One has that  for a set $E$ with finite perimeter in a neighborhood of $\Omega$, the local component of the fractional perimeter recovers, in the renormalized limit, the local perimeter of the set inside the domain $\Omega$, 
    \[ \lim_{s\nearrow 1} (1-s)P_s^L(E,\Omega)= \frac{\omega_{n-1}}{n-1} P(E,\Omega),\]
while we have that
    \[ \lim_{s\nearrow 1} (1-s)P_s^{NL}(E,\Omega)=\frac{\omega_{n-1}}{n-1} P(E,\partial \Omega),\]
    concluding that
    \eqlab{ \label{sto1} \lim_{s\nearrow 1} (1-s)P_s(E,\Omega)= \frac{\omega_{n-1}}{n-1} P(E,\bar \Omega).}
   Basically, in the limit, the far away data vanishes and the nonlocal component  concentrates on the boundary of the domain. 
   \bigskip
     
     The minimization problem is  the following.
     
     \begin{definition} \label{smin} 
Let $\Omega \subset \Rn$ be a bounded open set. Given $E_0:= E \setminus \Omega$, then $E$ is an $s$-{minimal set} in $\Omega$ with respect to $E_0$ if $P_s(E,\Omega)<\infty$ and
\[P_s(E, \Omega)\leq P_s(F,\Omega) \]
 for any $F$ such that
\[ F\setminus \Omega =E_0. \]
\end{definition}
     As in the classical case one obtains existence in the nonlocal framework by direct methods (check \cite[Theorem 3.2]{nms}, \cite[Theorem 1.8]{approxLuk}.)
     \begin{theorem}Let $\Omega \subset \Rn$ be an open set and let $E_0 \subset \Co \Omega$. There exist an $s$-minimal set in $\Omega$ with respect to $E_0$ if and only if there exists $F\subset \Rn$ with $F\setminus \Omega=E_0$ such that $P_s(F,\Omega)<\infty$.
     \end{theorem}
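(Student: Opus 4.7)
The ``only if'' direction is immediate: if $E$ is $s$-minimal in $\Omega$ with respect to $E_0$, then $E$ itself is an admissible $F$, since $E\setminus\Omega=E_0$ and $P_s(E,\Omega)<\infty$ by Definition \ref{smin}. The content lies in the ``if'' direction, which I would prove by the direct method, in the same spirit as the classical existence result recalled earlier.

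Set $m:=\inf\{P_s(G,\Omega)\;|\;G\subset\Rn,\ G\setminus\Omega=E_0\}$. The hypothesis supplies the competitor $F$, so $0\leq m\leq P_s(F,\Omega)<\infty$. Choose a minimizing sequence $\{E_k\}$ of admissible sets with $P_s(E_k,\Omega)\to m$. For the compactness step, fix a ball $\B\supset\bar\Omega$ (if $\Omega$ is unbounded, exhaust $\Rn$ by a nested family of such balls and run a diagonal argument at the end). Since every $E_k$ coincides with $E_0$ on $\Co\Omega$, the splitting \eqref{pss} shows that the Gagliardo seminorm of $\chi_{E_k}$ on $\B$ decomposes into a piece on $(\B\cap\Omega)^2$ equal to $2P_s^L(E_k,\Omega)$, a piece on $(\B\setminus\Omega)^2$ that depends only on $E_0$ and $\B$ and is therefore fixed, and a cross piece over $(\B\cap\Omega)\times(\B\setminus\Omega)$ controlled by $P_s^{NL}(E_k,\Omega)$. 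All three pieces are uniformly bounded in $k$, so $\chi_{E_k}$ is bounded in $W^{s,1}(\B)$, and together with the trivial bound $\|\chi_{E_k}\|_{L^1(\B)}\leq|\B|$ the fractional Rellich--Kondrachov compact embedding $W^{s,1}(\B)\hookrightarrow L^1(\B)$ yields a subsequence $\chi_{E_k}\to u$ in $L^1(\B)$. Passing to a further subsequence, the convergence is pointwise a.e., hence $u\in\{0,1\}$ a.e., so $u=\chi_E$ for some measurable $E$. Since $\chi_{E_k}\equiv\chi_{E_0}$ on $\Co\Omega$ for every $k$, we have $E\setminus\Omega=E_0$, so $E$ is admissible.

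For the lower semicontinuity step, extending the pointwise a.e.\ convergence of $\chi_{E_k}$ to $\chi_E$ across all of $\Rn$ (agreement is automatic outside $\Omega$) gives, for a.e.\ $(x,y)\in\R^{2n}\setminus(\Co\Omega)^2$,
\[ \frac{|\chi_{E_k}(x)-\chi_{E_k}(y)|}{|x-y|^{n+s}}\longrightarrow\frac{|\chi_E(x)-\chi_E(y)|}{|x-y|^{n+s}}. \]
Since the integrands are nonnegative, Fatou's lemma yields $P_s(E,\Omega)\leq\liminf_k P_s(E_k,\Omega)=m$, and admissibility of $E$ forces equality, so $E$ is $s$-minimal.

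I expect the main technical hurdle to be the compactness step, especially if $\Omega$ is unbounded, since one must control the long-range cross interactions between $\Omega$ and distant portions of $\Co\Omega$ uniformly in $k$; the key is precisely the splitting \eqref{pss}, which isolates the far-field data into a constant depending only on $E_0$ and reduces the needed uniform seminorm bound to the bound on $P_s(E_k,\Omega)$. Once this is in place, the lower semicontinuity is painless: nonnegativity of the Gagliardo integrand sidesteps the usual obstructions that plague $L^1$ limits of variational functionals.
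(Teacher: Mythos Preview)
The paper does not supply its own proof of this statement; it simply cites \cite[Theorem~3.2]{nms} and \cite[Theorem~1.8]{approxLuk} with the remark that existence follows ``by direct methods''. Your argument is precisely that direct method and is in line with those references, so there is nothing to compare against in the paper itself.

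One technical point in your compactness step needs repair. You claim that the piece of $[\chi_{E_k}]_{W^{s,1}(\B)}$ living on $(\B\setminus\Omega)^2$ is ``fixed'' and hence contributes to a uniform bound. It is indeed independent of $k$, but nothing in the hypothesis $P_s(F,\Omega)<\infty$ forces it to be \emph{finite}: the definition of $P_s(\cdot,\Omega)$ discards all pairs $(x,y)\in(\Co\Omega)^2$, so $E_0$ can have arbitrarily wild boundary inside $\B\setminus\Omega$ while still admitting a competitor of finite $s$-perimeter. The fix is to avoid $\B$ altogether. Since $E_k\equiv E_0$ on $\Co\Omega$, you only need $L^1_{\loc}(\Omega)$ compactness, and for this the bound
\[
[\chi_{E_k}]_{W^{s,1}(\Omega)}=2P_s^L(E_k,\Omega)\le 2P_s(E_k,\Omega)
\]
already suffices: apply the fractional compact embedding on smooth bounded $\Omega'\Subset\Omega$ and diagonalize. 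The remainder of your argument---the a.e.\ limit being a characteristic function, agreement with $E_0$ outside $\Omega$, and Fatou for lower semicontinuity---is correct as written.
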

      In particular, asking $P_s(\Omega,\Rn)<\infty$ is enough to guarantee existence. Furthermore, interestingly, as a corollary of the previous theorem, local minimizer always exist (see \cite[Corollary 1.9]{approxLuk}). 
      
     As in the classical case again, it is much more involved to study the regularity of $s$-minimal sets.      
     Accordingly to \eqref{sto1}, for $s$ close to $1$, it is natural to expect properties similar to those of classical minimal surfaces (and this is proved in \cite{regularity}). For any $s\in (0,1)$, however, it is known that minimal surfaces are smooth up to dimension $2$ (thanks to \cite{SavV}). As a matter of fact, the best result to this day, following from \cite{regularity}, \cite{SavV} and \cite{bootstrap}, is the following.
     
     \begin{theorem} Let $s\in (0,1)$ be the fractional parameter,  $\Omega\subset \Rn$ be a bounded open set and $E$ be a $s$-minimal set. Then
    \begin{enumerate}
    \item $\partial E$ is smooth, up to a closed, singular set, of Hausdorff dimension at most $n-3$,
\item there exists $\eps_0 \in (0,1/2)$ such that for all $s\in (0,1-\eps_0)$,  $\partial E$ is smooth, up to a closed, singular set of Hausdorff dimension at most $n-8$.
\end{enumerate}
     \end{theorem}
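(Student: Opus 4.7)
The plan is to follow the classical Federer dimension-reduction scheme, transplanted to the fractional setting. The three main ingredients are: a blow-up/compactness theory for $s$-minimizers, the flatness-implies-smoothness theorem of \cite{regularity} together with the bootstrap of \cite{bootstrap}, and the classification of $s$-minimal cones in low dimensions.

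First, I would fix $x_0\in\partial E$ and consider the rescaled sets $E_r:=(E-x_0)/r$. Using the uniform density estimates available for $s$-minimizers and the $L^1_{\loc}$-precompactness of families with locally bounded fractional perimeter, together with the lower semicontinuity of $P_s$ under $L^1_{\loc}$-convergence, one extracts a subsequence $r_k\downarrow 0$ with $E_{r_k}\to C$ in $L^1_{\loc}(\Rn)$, where $C$ is an $s$-minimal set that is invariant under positive dilations, i.e.\ a cone. Next I would invoke the $\eps$-regularity statement of \cite{regularity}: if $\partial E$ is sufficiently flat in some ball $B_r(x_0)$, then it is $C^{1,\alpha}$ there, and the bootstrap of \cite{bootstrap} then upgrades this to $C^\infty$. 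In particular, $x_0$ is a regular point of $\partial E$ as soon as one blow-up $C$ is a halfspace, so the singular set $\Sigma(E)$ consists exactly of those $x_0\in\partial E$ whose blow-ups are nontrivial $s$-minimal cones.

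Federer's dimension-reduction principle now applies once one verifies two structural facts: a blow-up of a singular $s$-minimal cone at a point away from the origin is itself an $s$-minimal cone of one lower dimension, and the product of a halfspace with a nontrivial $s$-minimal cone is again a nontrivial $s$-minimal cone. Iterating in the standard way yields
\[
\dim_{\H}\Sigma(E)\leq n-k^{*}-1,
\]
where $k^{*}$ denotes the smallest dimension admitting a nontrivial $s$-minimal cone. For $(i)$, one plugs in \cite{SavV}: every $s$-minimal cone in $\R^2$ is a halfspace, for every $s\in(0,1)$; hence $k^{*}\geq 3$ and $\dim_{\H}\Sigma(E)\leq n-3$. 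For $(ii)$, one uses the perturbative analysis of \cite{regularity} (leveraging the renormalized convergence \eqref{sto1}) to show that, for $s$ sufficiently close to $1$, $s$-minimal cones converge to classical minimal cones in the appropriate topology. Since classical minimal cones in $\R^k$ with $k\leq 7$ are halfspaces by Simons' theorem, a compactness-and-contradiction argument produces the threshold $\eps_0\in(0,1/2)$ so that in the prescribed range of $s$ the classification extends up to dimension $7$, giving $k^{*}\geq 8$ and hence $\dim_{\H}\Sigma(E)\leq n-8$.

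The main obstacle is the cone classification in the final step: the two-dimensional result of \cite{SavV} rests on a genuinely nonlocal improvement-of-flatness scheme with no classical analogue, while the perturbative classification near $s=1$ from \cite{regularity} requires compactness estimates for $s$-minimizers that are uniform in $s$, coupled with the degeneration of the fractional perimeter to the classical one. Verifying that the Federer iteration commutes with these compactness arguments (in particular, that limits of singular cones stay singular and $s$-minimal, uniformly in $s$) is the technically delicate part.
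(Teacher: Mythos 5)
Your outline is, in substance, the same route the paper has in mind: the paper offers no proof of this theorem at all, but states it as the combination of \cite{regularity}, \cite{SavV} and \cite{bootstrap}, and your sketch is precisely the standard assembly of those ingredients on top of the dimension-reduction machinery of \cite{nms} (blow-up, $\eps$-regularity plus bootstrap, Federer reduction, classification of $s$-minimal cones in the plane for all $s$, and the perturbative classification up to dimension $7$ for $s$ close to $1$).

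Two points deserve attention. First, in the blow-up step as you wrote it there is a genuine gap: uniform density estimates, $L^1_{\loc}$ precompactness and lower semicontinuity of $P_s$ give a subsequential limit which is $s$-minimal, but they do \emph{not} give dilation invariance. To conclude that the blow-up is a cone you need the monotonicity formula of \cite{nms} for the extended (Caffarelli--Silvestre) energy, together with its rigidity statement (the monotone quantity is constant if and only if the set is a cone), and the locally uniform convergence of the boundaries coming from the density estimates to guarantee that singular points survive in the limit; similar remarks apply to the two structural facts you list for Federer's iteration, which in the nonlocal setting are not formal because the fractional perimeter does not split along cylinders. Second, about the range of $s$ in part (2): your argument produces the $n-8$ bound for $s$ sufficiently close to $1$, i.e.\ $s\in(1-\eps_0,1)$, which is the statement actually proved in \cite{regularity} via the asymptotics \eqref{sto1}; the paper's formulation ``for all $s\in(0,1-\eps_0)$'' is evidently a misprint, so your reading is the correct one, but you should state explicitly that the threshold comes from a compactness argument as $s\nearrow1$ with estimates uniform in $s$, which is exactly the delicate point you flag at the end.
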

    
    \subsection{Nonlocal minimal graphs}
The problem we look at in this subsection can be thought as the fractional version of Problem \ref{pb3}.

  \begin{problem}\label{pb4}
   Let $\Omega\subset \Rn$ be a bounded open set, and let $\varphi $ have integrable ``local tail''.  Find
   \[ \min \big\{ \mathcal F_s(u,\B) \; \big| \; u\in W^{s,1}(\Omega), u=\varphi \mbox{ in }  \Co \Omega \big\} .\]
   \end{problem}
   
Consider $F\subset \R^{n+1}$, that is the subgraph of some function $u$, that is
\[F:= Sg (u,\Omega)= \big\{ (x,x_{n+1}) \in \Omega\times \R\subset \R^{n+1}\; \big| \; x_{n+1}<u(x)\big\}.\]

In order to deal with nonlocal minimal graphs, one could take into consideration Remark \ref{nmg} and work in the geometric setting, thus trying to 
find the $s$-minimal graph which locally minimizes the $s$-perimeter in the class of subgraphs. This approach is motivated by a couple of observations: 
\begin{itemize}
\item according to \cite[Theorem 1.1]{graph}, if one considers $\Omega$ a bounded open set with $C^{1,1}$ boundary and the exterior data as a continuous subgraph in $\Co \Omega \times \R$, then the (local) minimizer of the $s$-perimeter is indeed a subgraph in $\Omega\times \R$ (and a local minimizer always exists according to \cite[Corollary 1.9]{approxLuk}),
\item an analogue of Point 2) of Remark \ref{nmg} is proved in \cite[Theorem 4.1.10]{Lucaphd} (and in the upcoming  paper \cite{LucaTeo}). If $F\setminus (\Co \Omega \times \R)$ is a subgraph, and $F\cap (\Omega \times \R)$  is contained in a cylinder, then the perimeter decreases if $F$ is replaced by a subgraph, built with a ``vertical rearrangement '' of the set $F$.  
\end{itemize}

In this setting, analogously to Point 3) in Remark \ref{nmg}, it is necessary to work with local minimizers, since the nonlocal part of the perimeter could give infinite contribution. 

However, remarkably in  \cite{Lucaphd} (and \cite{LucaTeo}), a very nice functional setting is introduced for the area of a graph, which is is equivalent to the perimeter framework in the following sense.

\begin{proposition} Let $\Omega\subset \Rn$ be a bounded open set  and $u\colon \Rn \to \R$ be a measurable function such that $u\in W^{s,1}(\Omega)$. If $u$ is a minimizer for $\mathcal F_s$, then $u$ locally minimizes $P_s(\cdot, \Omega\times \R)$ among sets with given exterior data $Sg(u,\Co \Omega)$. 
\end{proposition}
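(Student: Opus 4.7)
\emph{Proof sketch.} The plan is to identify the functional $\mathcal F_s(u,\cdot)$ with the $u|_\Omega$-dependent portion of the $(n+1)$-dimensional $s$-perimeter of the subgraph $Sg(u,\Rn)$ inside the cylinder $\Omega\times\R$, and then to reduce arbitrary competitors for the $s$-perimeter to subgraph competitors via a vertical rearrangement. Minimality of $u$ under $\mathcal F_s$ will then translate directly into local minimality of $Sg(u,\Rn)$ under $P_s(\cdot,\Omega\times\R)$.

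First, I would write $P_s(Sg(u,\Rn),\Omega\times\R)$ using the decomposition \eqref{pss} into its local and nonlocal parts relative to the cylinder $\Omega\times\R\subset\R^{n+1}$, and then regroup the terms according to whether they depend on $u|_\Omega$ or only on the fixed exterior data $\varphi := u|_{\Co\Omega}$. The interactions of the exterior portion $Sg(u,\Co\Omega)$ of the subgraph with itself (across $\partial\Omega\times\R$) contribute only a constant $C(\varphi)$, while all the remaining local and crossed interactions depend only on $u|_\Omega$ and match exactly, by construction, the functional $\mathcal F_s$. This yields an identity of the form
\[ P_s(Sg(u,\Rn),\Omega\times\R) = \mathcal F_s(u,\Omega) + C(\varphi). \]
The hypothesis that $\varphi$ has integrable local tail is what guarantees that the $\varphi$-independent pieces are finite and that the identification makes sense; the hypothesis $u\in W^{s,1}(\Omega)$ plays the analogous role for the $u$-dependent pieces.

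Next, fix any measurable $F\subset\R^{n+1}$ with $F\setminus(\Omega\times\R) = Sg(u,\Co\Omega)$ and $F\cap(\Omega\times\R)$ contained in a bounded sub-cylinder of $\Omega\times\R$ (which is exactly what "local" minimization requires). By the vertical rearrangement principle recalled in the second bullet above (cf.\ \cite[Theorem 4.1.10]{Lucaphd}), there exists a measurable $v\colon\Rn\to\R$ with $v=\varphi$ on $\Co\Omega$ such that $\widetilde F := Sg(v,\Rn)$ satisfies $P_s(\widetilde F,\Omega\times\R) \leq P_s(F,\Omega\times\R)$. Applying the identification of the previous step both to $u$ and to $v$, and using the assumption that $u$ is a minimizer for $\mathcal F_s$ in the class of functions matching $\varphi$ on $\Co\Omega$, we chain
\[ P_s(Sg(u,\Rn),\Omega\times\R) = \mathcal F_s(u,\Omega)+C(\varphi) \leq \mathcal F_s(v,\Omega)+C(\varphi) = P_s(\widetilde F,\Omega\times\R) \leq P_s(F,\Omega\times\R), \]
which is precisely the local $s$-minimality of $Sg(u,\Rn)$ among sets with exterior data $Sg(u,\Co\Omega)$.

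The main obstacle is the first step. The $s$-perimeter of an unbounded subgraph in an unbounded cylinder is a sum of a priori divergent integrals (long vertical strips contribute like $\int |y-x|^{-(n+1+s)}$ convolved with the heights of the graph), and one must split and recombine these contributions so that the $u|_\Omega$-dependent part is exposed as a finite quantity that coincides with $\mathcal F_s(u,\Omega)$, while all remaining $\varphi$-only terms are absorbed into the additive constant $C(\varphi)$. The vertical rearrangement in the second step is also delicate: it must preserve the prescribed exterior subgraph and produce, from a merely set-theoretic competitor, a subgraph of a measurable function, for which the "bounded cylinder containment" hypothesis on $F$ is essential.
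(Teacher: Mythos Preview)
The paper does not prove this proposition itself; it simply cites \cite{Lucaphd} and the forthcoming \cite{LucaTeo}, and your outline matches the strategy used there (and hinted at in the surrounding text): identify $\mathcal F_s$ with the $s$-perimeter of the subgraph up to exterior-data terms, then invoke the vertical rearrangement of \cite[Theorem 4.1.10]{Lucaphd} to pass from arbitrary competitors to subgraphs. So strategically you are on the same track.

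There is, however, a genuine gap in the way you have written the argument. The identity $P_s(Sg(u,\Rn),\Omega\times\R)=\mathcal F_s(u,\Omega)+C(\varphi)$ cannot hold as stated, because $\Omega\times\R$ is unbounded and the $s$-perimeter in this infinite cylinder is typically $+\infty$; you acknowledge this at the end, but your chain of inequalities still uses the identity literally, so as written it reads $\infty\le\infty$. Relatedly, your description of a local competitor is off: $Sg(u)\cap(\Omega\times\R)$ is itself unbounded (it extends to $x_{n+1}=-\infty$), so ``$F\cap(\Omega\times\R)$ contained in a bounded sub-cylinder'' is not the right condition; local minimality asks that $F\Delta Sg(u)$ be compactly contained in $\Omega\times\R$. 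The fix is to carry out the comparison in a bounded cylinder $\Omega^M:=\Omega\times(-M,M)$ containing $F\Delta Sg(u)$, where the perimeters are finite and the identification with $\mathcal F_s$ (up to a constant depending on $\varphi$ and $M$) is exactly the content of \cite[Lemmas 4.2.7--4.2.8]{Lucaphd} alluded to just after the statement. The rearranged competitor $\widetilde F=Sg(v)$ must then also be shown to agree with $Sg(u)$ outside $\Omega^M$, which requires the boundedness of $F\Delta Sg(u)$ rather than of $F$ itself.
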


This $s$-fractional area functional is introduced in the next definition.
\begin{definition}\label{ars}
Let $\Omega \subset \Rn$ be a bounded open set, and let $u\colon \Rn \to \R$ be a measurable function. Then
\[ \mathcal F_s(u,\Omega):= \iint_{\R^{2n}\setminus (\Co\Omega)^2} \mathcal G_s \left( \frac{u(x)-u(y)}{|x-y|} \right) \frac{dx \, dy}{|x-y|^{n-1+s}},\]
where
\[ \mathcal G_s(t)= \int_0^t \left( \int_0^\tau \frac{d\rho} {(1+\rho^2)^{\frac{n+1+s}2} }\right) \, d\tau.\]
\end{definition}

The formula for the area functional is  motivated on the one hand, by the Euler-Lagrange equation for nonlocal minimal graphs. Namely, critical points of $\mathcal F_s$ are weak solutions of the Euler-Lagrange equation (see also Section \ref{el}). On the other hand, as mentioned previously, minimizing the area functional is equivalent  to minimizing the perimeter. It actually holds that
the local part of the area functional (that is, the interactions of $\Omega$ with itself) equals the perimeter of the subgraph of the function $u \in W^{s,1}(\Omega)$ (plus a constant term), and roughly speaking, the same relation holds between the nonlocal part of the area and that of the perimeter (see \cite[Lemma 4.2.7, 4.2.8]{Lucaphd}, \cite{LucaTeo}).

In order to have existence of Problem \ref{pb4} in $W^{s,1}(\Omega)$, one needs to ask a quite strong condition on the tail. This difficulty is surmounted by the authors of \cite{LucaTeo} by choosing a good notion of minimizer. We leave further explanations to the previously cited paper, mentioning that the existence result  is obtained in the following setting.  
Let $\mathcal O\subset \Rn$ be a given open set such that $\Omega $ is compactly contained in $\mathcal O$. Defining the ``local tail'' of a measurable function $\varphi \colon \Co \Omega\to \R$ as
 \[ \mbox{Tail}_s(\varphi, \mathcal O \setminus \Omega; x):= \int_{\mathcal O \setminus \Omega} \frac{|\varphi(y)|}{|x-y|^{n+s}} \, dy, \] 
 we can state
 the existence of solutions of Problem \ref{pb4} (see \cite[Theorem 4.1.3]{Lucaphd} and \cite{LucaTeo}).
 
  \begin{theorem}
 Suppose that $\mbox{Tail}_s(\varphi, \mathcal O \setminus \Omega; \cdot)\in L^1(\Omega)$ for $\mathcal O$ big enough depending on $\Omega$. Then there exists a unique minimizer of Problem \ref{pb4}.
  \end{theorem}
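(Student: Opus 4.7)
\medskip
\noindent\textbf{Proof plan.} The strategy is a standard direct-method argument in the calculus of variations, exploiting the strict convexity of the integrand $\mathcal{G}_s$. First, I would verify that the admissible class $\mathcal{A}_\varphi:=\{u\in W^{s,1}(\Omega)\mid u=\varphi \text{ in } \Co\Omega\}$ contains a competitor of finite energy: taking $\tilde u$ to be, say, the zero extension of $\varphi$ inside $\Omega$ (or any bounded extension), one splits the double integral defining $\mathcal{F}_s(\tilde u,\B)$ into the interaction of $\Omega$ with itself, the interaction of $\Omega$ with $\mathcal{O}\setminus\Omega$, and the interaction of $\Omega$ with $\Rn\setminus\mathcal{O}$. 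Using that $\mathcal{G}_s(t)\sim t^2$ near the origin and $\mathcal{G}_s(t)\sim |t|$ at infinity (from the explicit formula, since $\mathcal{G}_s'$ is bounded), one controls each piece: the local part by the boundedness of $\tilde u$ on $\Omega$, the middle part precisely by $\mathrm{Tail}_s(\varphi,\mathcal{O}\setminus\Omega;\cdot)\in L^1(\Omega)$ (which gives linear growth control on the $\mathcal{G}_s$ argument after bounding the numerator by $|\tilde u(x)|+|\varphi(y)|$), and the far part by the integrability of $|x-y|^{-n-s}$ for $x\in\Omega$, $y\in\Rn\setminus\mathcal{O}$ combined with $\mathrm{Tail}_s(\varphi,\Rn\setminus\mathcal{O};x)$ being finite for $\mathcal{O}$ chosen large enough. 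This is the step where the hypothesis on the local tail of $\varphi$ is decisive.

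Next, I would take a minimizing sequence $\{u_k\}\subset\mathcal{A}_\varphi$, each agreeing with $\varphi$ on $\Co\Omega$, with $\mathcal{F}_s(u_k,\B)\to\inf \mathcal{F}_s$. Since $\mathcal{G}_s(t)\geq c\min(t^2,|t|)$, the bound $\mathcal{F}_s(u_k,\B)\leq C$ yields, after restricting the double integral to $\Omega\times\Omega$, a uniform bound on the Gagliardo seminorm $[u_k]_{W^{s,1}(\Omega)}$ (for the portion where the difference quotient is small one uses $\mathcal{G}_s(t)\geq c t^2$ and Cauchy–Schwarz, for the portion where it is large one uses the linear bound directly). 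Together with an $L^1(\Omega)$ bound obtained either from a priori control (e.g.\ comparison with the competitor $\tilde u$ and truncation arguments), this furnishes compactness in $L^1_{\loc}(\Omega)$ via the compact embedding $W^{s,1}(\Omega)\hookrightarrow L^1(\Omega)$ for $\Omega$ bounded Lipschitz. Extracting a subsequence, $u_k\to u_\infty$ in $L^1(\Omega)$ and a.e., with $u_\infty=\varphi$ on $\Co\Omega$, so $u_\infty\in\mathcal{A}_\varphi$.

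Lower semicontinuity of $\mathcal{F}_s$ along this convergence then follows from Fatou's lemma applied to the pointwise a.e.\ convergence of the integrand (using continuity of $\mathcal{G}_s$), yielding $\mathcal{F}_s(u_\infty,\B)\leq\liminf_k \mathcal{F}_s(u_k,\B)=\inf\mathcal{F}_s$, so $u_\infty$ is a minimizer. Finally, uniqueness is a direct consequence of strict convexity: since
\[
\mathcal{G}_s''(t)=\frac{1}{(1+t^2)^{\frac{n+1+s}{2}}}>0,
\]
$\mathcal{G}_s$ is strictly convex on $\R$, and hence $u\mapsto \mathcal{F}_s(u,\B)$ is strictly convex on the affine space $\mathcal{A}_\varphi$. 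Two distinct minimizers $u_1,u_2$ would then yield $\mathcal{F}_s(\tfrac{u_1+u_2}{2},\B)<\tfrac{1}{2}\mathcal{F}_s(u_1,\B)+\tfrac{1}{2}\mathcal{F}_s(u_2,\B)=\inf\mathcal{F}_s$, a contradiction.

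\medskip
\noindent\emph{Main obstacle.} The delicate point is not the convexity machinery but controlling the contribution of the nonlocal tail. One must show both that the energy is finite on some competitor and that minimizing sequences have a uniform tail control inherited from the fixed exterior datum; this is precisely why the $L^1$-integrability of $\mathrm{Tail}_s(\varphi,\mathcal{O}\setminus\Omega;\cdot)$ (rather than a weaker pointwise condition) enters, and it is the technical heart of the argument that makes the subsequent direct method go through cleanly. The detailed construction of the admissible competitor and the precise coercivity estimate are the steps I would expect to require the most care.
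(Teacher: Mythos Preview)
The paper does not actually prove this theorem; it is stated with a reference to \cite[Theorem 4.1.3]{Lucaphd} and \cite{LucaTeo}, so there is no ``paper's own proof'' to compare against. What the paper does provide, however, is a pointed remark just before the statement: ``In order to have existence of Problem \ref{pb4} in $W^{s,1}(\Omega)$, one needs to ask a quite strong condition on the tail. This difficulty is surmounted by the authors of \cite{LucaTeo} by choosing a good notion of minimizer.'' This remark is directly relevant to your sketch.

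Your direct-method outline is the right skeleton, and the uniqueness argument via strict convexity of $\mathcal G_s$ is fine (with the small addendum that equality in the convexity inequality forces $u_1-u_2$ to be constant, and the exterior constraint then forces that constant to be zero). The genuine gap is in your treatment of the far interaction. You write that the contribution over $\Omega\times(\Rn\setminus\mathcal O)$ is handled ``by the integrability of $|x-y|^{-n-s}$ \dots\ combined with $\mathrm{Tail}_s(\varphi,\Rn\setminus\mathcal O;x)$ being finite for $\mathcal O$ chosen large enough.'' But the hypothesis only controls the \emph{local} tail $\mathrm{Tail}_s(\varphi,\mathcal O\setminus\Omega;\cdot)$; nothing is assumed about $\int_{\Rn\setminus\mathcal O}|\varphi(y)|\,|x-y|^{-n-s}\,dy$, and for general $\varphi$ this quantity---and hence $\mathcal F_s(u,\Omega)$ itself---can be $+\infty$ for every competitor. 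This is precisely the obstruction the paper flags.

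The resolution in the cited works is not to strengthen the tail hypothesis but to change the notion of minimizer: one works with the \emph{difference} $\mathcal F_s(u,\Omega)-\mathcal F_s(v,\Omega)$ (or an equivalent truncated/renormalized functional), which is well defined and finite for competitors agreeing outside $\Omega$ because the divergent far contribution is common to both and cancels. With this relative formulation the direct method runs essentially as you outline: finiteness of the energy gap, coercivity in $W^{s,1}(\Omega)$ from the lower bound $\mathcal G_s(t)\gtrsim\min(t^2,|t|)$, compactness, and Fatou. So your plan is correct in spirit, but as written it would fail at the very first step (finite-energy competitor) unless you either add a global tail assumption or, as in \cite{LucaTeo}, reformulate what ``minimizer'' means.
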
 

As for regularity, combining results from \cite{Lucaphd,LucaTeo,Teocab} one has the following interior regularity theorem. 

 \begin{theorem}If $u\in W^{s,1}(\Omega)$ is a minimizer of $\mathcal F_s(\cdot, \Omega)$, then $u\in C^\infty(\Omega)$.
 \end{theorem}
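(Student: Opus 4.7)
The plan is to combine the geometric identification of $\mathcal F_s$-minimizers with subgraphs that locally minimize the fractional perimeter, and then to invoke the interior regularity theory for $s$-minimal surfaces together with a bootstrap based on the Euler--Lagrange equation. First, by the Proposition immediately preceding the theorem, the subgraph $E:=Sg(u,\R^n)$ locally minimizes $P_s(\cdot,\Omega\times\R)$ among competitors coinciding with $Sg(u,\Co\Omega)$ outside $\Omega\times\R$. This shifts the problem: establishing $u\in C^\infty(\Omega)$ reduces to showing that $\partial E$ is a smooth hypersurface inside the cylinder $\Omega\times\R$ which is, in addition, nowhere vertical over $\Omega$.

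Next, I would apply the interior regularity theorem for $s$-minimal sets recalled just before the subsection to $E$ in the ambient space $\R^{n+1}$: outside a closed singular set of Hausdorff codimension at least three, $\partial E$ is smooth. The delicate point is to exclude both interior singular points and vertical tangent planes at regular points. The graph structure is decisive here: by a blow-up argument, any tangent cone to $E$ at an interior point is itself an $s$-minimal set which is a subgraph, and by the Bernstein/rigidity results for graphical $s$-minimal cones developed in \cite{Lucaphd,LucaTeo} such cones must be either horizontal half-spaces (the regular case) or vertical half-spaces. Vertical tangent cones, however, correspond to blow-up of $|\nabla u|$, which is ruled out by the local gradient estimates for $\mathcal F_s$-minimizers proved in \cite{Teocab}. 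Hence $\partial E\cap(\Omega\times\R)$ is locally the graph of a function whose gradient is locally bounded, so that $u$ has locally bounded derivative in $\Omega$, and in particular $u$ is locally Lipschitz.

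Once the non-vertical, singularity-free structure of $\partial E$ is secured, $u$ satisfies the Euler--Lagrange equation associated with $\mathcal F_s$ (see Section \ref{el}), which takes the form of a nonlocal mean-curvature equation, i.e.\ an integro-differential equation of order $1+s$. From the Lipschitz bound, an incremental quotient argument together with the Hölder regularity theory for such operators gives $u\in C^{1,\alpha}_{\rm loc}(\Omega)$. A standard bootstrap based on the Schauder-type estimates for linear fractional operators, applied to difference quotients $\tau_h u$ of the minimizer, then upgrades the regularity step by step to $C^{k,\alpha}_{\rm loc}$ for every $k$, whence $u\in C^\infty(\Omega)$.

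The principal obstacle is the second step: excluding interior singularities and vertical tangents. This is precisely where the graph hypothesis is essential, since general $s$-minimal surfaces may have genuine singularities (the best known bound places them only in codimension three). The rigidity of graphical $s$-minimal cones from \cite{Lucaphd,LucaTeo}, combined with the a priori gradient control of \cite{Teocab}, is what converts the partial regularity available for arbitrary $s$-minimal sets into full interior smoothness for $\mathcal F_s$-minimizing graphs; the subsequent bootstrap to $C^\infty$ is then routine.
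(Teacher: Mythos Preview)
The paper itself does not give a proof of this theorem: it is a survey, and the statement is accompanied only by the remark that one obtains it ``combining results from \cite{Lucaphd,LucaTeo,Teocab}''. So there is no detailed argument to compare against; your sketch is essentially a reconstruction of what that combination amounts to, and in broad outline it matches the intended strategy: pass to the subgraph via the preceding Proposition, use the general interior regularity theory for $s$-minimal sets, exploit the graph structure to rule out singular and vertical points, and then bootstrap through the Euler--Lagrange equation.

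One point deserves care. You invoke the gradient estimate of \cite{Teocab} to exclude vertical tangent cones, but that estimate is proved for solutions already known to be classical (at least $C^2$), so using it directly on a minimizer whose smoothness is the very thing to be established risks circularity. In the actual arguments this is handled either by approximation (smooth exterior data giving smooth solutions, then passing to the limit with a uniform gradient bound) or by first using the partial regularity theory to get smoothness on an open dense set, applying the a priori estimate there, and then showing the bound propagates across the residual set. Your step ``vertical tangent cones correspond to blow-up of $|\nabla u|$, ruled out by \cite{Teocab}'' compresses this into a single line; to make the argument complete you would need to spell out one of these two mechanisms. The rest of the outline---the blow-up/rigidity step for graphical $s$-minimal cones and the Schauder-type bootstrap---is the standard route and is correctly identified.
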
 

Boundary regularity of nonlocal minimal surfaces is a much more complicated and surprising story, and it gives a quite exhaustive answer to questions about the stickiness phenomena. A very recent result of \cite{sticks} establishes, at least in the plane, a dichotomy: either nonlocal minimal graphs are continuous across the boundary (and in that case, their derivatives are H\"{o}lder continuous), or they are not continuous, which equals to presenting stickiness.  
This result is contained in \cite[Corollary 1.3]{sticks}.  More precisely:

\begin{theorem}\label{dich}
Let $u\colon\R \to \R$, with $u\in C^{1,\frac{1+s}2}([-h,0])$ for some $h\in (0,1)$, be such that
$u$ is locally $s$-minimal for $\mathcal F_s(\cdot, (0,1))$.  Then 
\[ \overline{ \partial Sg(u) \cap \left( (0,1)\times \R\right) } \mbox{ is a closed, } C^{1,\frac{1+s}2} \mbox{ curve}.\]
Moreover, the following alternative holds:
\begin{enumerate}
\item either 
\[\lim_{x_1 \searrow 0} u(x_1)= \lim_{x_1\nearrow 0} u(x_1)\]
and 
\[u\in C^{1,\frac{1+s}2}([0,1/2]),\]
\item or
\[l= \lim_{x_1 \searrow 0} u(x_1)\neq \lim_{x_1\nearrow 0} u(x_1)\] 
and there exists $\mu>0$ such that 
		\[u^{-1} \in C^{1,\frac{1+s}2} ([l-\mu, l+\mu]).\]
\end{enumerate}
\end{theorem}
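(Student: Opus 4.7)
The plan is to exploit that $E := Sg(u) \subset \R^2$ locally minimizes the $s$-perimeter in the strip $\Sigma := (0,1) \times \R$, with fixed exterior datum coming from $Sg(u)\setminus(\Sigma)$, and to combine interior regularity (available in the plane for every $s\in(0,1)$) with a blow-up/classification analysis at the boundary line $\{0\}\times\R$. Throughout, the equivalence between minimizing $\mathcal F_s(\cdot,\Sigma)$ and locally minimizing $P_s(\cdot,\Sigma)$ in the class of subgraphs (cf. Proposition in the subsection on nonlocal minimal graphs) lets us freely switch between the functional and the geometric pictures.

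First I would invoke the interior regularity of $s$-minimal sets in the plane: by the Savin--Valdinoci theorem combined with bootstrap, $\partial E \cap \Sigma$ is a $C^{1,\frac{1+s}2}$ one-dimensional manifold. Together with the hypothesis $u \in C^{1,\frac{1+s}2}([-h,0])$, which already ensures that $\partial E$ is a $C^{1,\frac{1+s}2}$ curve on a neighborhood of $(-h,0)\times \R$, this gives that $\overline{\partial E \cap \Sigma}$ is a $C^{1,\frac{1+s}2}$ arc away from $\{0\}\times\R$. The job is then to describe what happens at the two vertical lines $\{0\}\times\R$ and $\{1\}\times\R$; by symmetry I treat only the former.

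Next I would perform a blow-up analysis at each boundary point $(0,\bar y)\in \overline{\partial E\cap \Sigma}\cap(\{0\}\times\R)$. Standard compactness and the classification of minimizing $s$-cones in $\R^2$ force any tangent cone at such a point to be a half-plane, and up to rotation there are only two relevant orientations compatible with $E$ being a subgraph in $\Sigma$: the tangent line is either \emph{transverse} to $\{0\}\times\R$, or it coincides with $\{0\}\times\R$ itself (the vertical case). In the transverse case, $\partial E$ crosses the line $\{0\}\times \R$ at the single point $(0,u(0^-))$, forcing $u(0^+)=u(0^-)$; a boundary improvement-of-flatness argument using the $C^{1,\frac{1+s}2}$ exterior data then upgrades the curve to $C^{1,\frac{1+s}2}$ up to $\{0\}\times\R$, which expressed as a graph over $x_1$ gives alternative (1). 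In the vertical case, the blow-up being $\{0\}\times\R$ means that $\partial E$ contains a genuine vertical segment of the form $\{0\}\times[l,u(0^-)]$ (up to swapping the endpoints), so on a neighborhood of this segment $\partial E$ is a graph in the $x_1$-direction parametrized by $x_2$; rotating the picture by $90^\circ$ turns this back into a boundary-graph situation of the same kind as before, and the same interior/boundary regularity theory applied to the rotated minimal graph yields $u^{-1}\in C^{1,\frac{1+s}2}([l-\mu,l+\mu])$ for some $\mu>0$, which is alternative (2). Closedness of $\overline{\partial E\cap\Sigma}$ as a curve follows because in both alternatives the arc extends as a $C^{1,\frac{1+s}2}$ piece up to the vertical boundary lines, and these pieces join into a closed loop thanks to the exterior curve being $C^{1,\frac{1+s}2}$.

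The main obstacle I expect is propagating the sharp exponent $\frac{1+s}2$ up to the boundary and, in particular, ruling out intermediate or oscillatory tangent behavior at points of $\{0\}\times\R$ (so that the blow-up is forced to be either transverse or fully vertical, with nothing in between). This is essentially a boundary Schauder estimate for the nonlocal minimal surface equation adapted to the interface between a frozen $C^{1,\frac{1+s}2}$ graph outside $\Sigma$ and the free minimizer inside, and it is the technical heart of \cite{sticks}; the required ingredients are a quantitative flatness-decay scheme, suitable barriers/comparison principles for the fractional mean curvature operator, and exploitation of the one-dimensional nature of the problem that prevents any nontrivial conical singularity from forming.
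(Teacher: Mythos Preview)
The paper does not supply its own proof of this theorem: it is stated as a result quoted from \cite[Corollary 1.3]{sticks}, with no argument given here. Consequently there is no ``paper's own proof'' to compare your proposal against.

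That said, your outline is a faithful high-level description of the strategy actually carried out in \cite{sticks}: pass from $\mathcal F_s$-minimality to local $s$-minimality of the subgraph, use interior regularity in the plane, perform a blow-up at boundary points of $\{0\}\times\R$, and exploit the classification of planar minimizing cones to force the dichotomy transverse/vertical, then upgrade via a boundary improvement-of-flatness with the $C^{1,\frac{1+s}{2}}$ exterior datum. You also correctly flag the genuine technical core: the boundary Schauder-type estimate that both rules out intermediate tangents and delivers the sharp exponent $\frac{1+s}{2}$. This is precisely where the work in \cite{sticks} lies, and your sketch does not (and is not expected to) reproduce it; as a roadmap, however, it matches the original approach.
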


Notice that this theorem says that geometrically, the $s$-minimal graph is a $C^{1,\frac{1+s }2}$ curve in the interior of the cylinder, and up to the boundary. We further discuss Point 2) of this theorem in Section \ref{stick}.

\subsection{The fractional Euler-Lagrange equation}\label{el}

Classical minimal surfaces are characterized by the fact that at regular points, the mean curvature vanishes. This holds also in the fractional case, so we begin by introducing the fractional mean curvature (see \cite{abaty,nms}). Let $E\subset \Rn$ and $q\in \partial E$. Then
\[ \I_s[E](q) := P.V. \int_{\Rn} \frac{\chi_{\Co E}(x)-\chi_E(x)}{|x-q|^{n+s} } \, dx.\]
We will, for the sake of simplicity, omit the $P.V.$ in our computations.  

Just like for the $s$-perimeter, it holds that sending $s$ to $1$, the classical mean curvature appears. More precisely, let $E$ have $C^2$ boundary, then for any $q\in \partial E$ it holds that
\[ \lim_{s\nearrow 1} (1-s) \I_s[E](q)= \omega_{n-1} H[E](q),\]
where $H[E[(q)$ denotes the classical mean curvature at $q\in \partial E$, with the convention that balls have positive mean curvature. 

In the case of nonlocal minimal subgraphs $Sg(u)\subset \R^{n+1}$, one can give an explicit formula for the mean curvature, in dependence of the function $u$. Suppose for simplicity that we have a global minimal graph of $u\in C^{1,\alpha}(\Rn)$, which up to translations and rotations satisfies $u(0)=0, \nabla u (0)=0$. Then for $Q\in \partial Sg(u)$, (i.e. $u(q)=q_{n+1}$) one can write
\bgs{ \I_s[Sg(u)](Q) =&\; \int_{\Rn} \frac{\chi_{\Co Sg(u)}(X)-\chi_{Sg(u)}(X)}{|X-Q|^{n+1+s} } \, dX \\
					= &\; \int_{\R^{n}} dx'\int_{u(x)}^\infty \frac{dx_{n+1}}{(|x-q|^2 +|x_{n+1}-q_{n+1}|^2)^{\frac{n+1+s}2} }
					\\
					&\; -\int_{\R^{n}} dx'\int_{-\infty}^{u(x)} \frac{dx_{n+1}}{(|x-q|^2 +|x_{n+1}-q_{n+1}|^2)^{\frac{n+1+s}2} }
					\\
					=&\; \int_{\Rn} \frac{dx}{ |x-q|^{n+s} } \int_{ \frac{u(x)-q_{n+1} }{ |x-q| } }^\infty 
					 \frac{d\rho }{
					  (1+\rho^2)^{
					  \frac{n+1+s}2 
					  }
					  }
					 \\
					&\; - \int_{\Rn} \frac{dx'}{|x-q|^{n+s}} \int_{-\infty}^{\frac{u(x)-q_{n+1}}{|x-q|}} \frac{d\rho }{
					  (1+\rho^2)^{
					  \frac{n+1+s}2 
					  }
					  }
					  \\
					  =&\; 2\int_{\Rn} \frac{dx}{ |x-q|^{n+s} }  \int_0^{\frac{u(x)-q_{n+1}}{|x-q|}}  \frac{d\rho }{
					  (1+\rho^2)^{
					  \frac{n+1+s}2 
					  }
					  },					}
					where we have changed variables and have used symmetry. 
					Denoting
					\[ G_s(\tau)= \int_0^\tau \frac{d\rho }{
					  (1+\rho^2)^{
					  \frac{n+1+s}2 
					  }
					  },\]
					  recalling Definition \ref{ars} we notice that
					  \[ \mathcal G'_s(t)=G_s(t)\] 
					  which allows to prove, at least formally, that
					  \[ \frac{d}{d\eps}\Bigg|_{\eps=0} \mathcal F_s(u+\eps v)=0,\]
					  implies that, in a weak sense,
					  \[ \I_s[Sg(u)] =0. \]
					  This explains the connection between the fractional mean curvature operator and the functional formulation for the area operator in Definition \ref{ars}, introduced in \cite{LucaTeo}. 
					  \\
					  The formula for the mean curvature operator can be written also ``locally'', having $F\subset \R^{n+1}$ a set that is locally the graph of a function $u\in C^{1,\alpha}(B_r(q))$. Up to rotations and translations, and denoting for $r,h>0$
					  \[ K_r^h(Q):= B_r(q)\times (q_{n+1}-h,q_{n+1}+h),\]
					  one has that 
					  \eqlab{\label{af23}
					  \I_s[F](q)=  2\int_{B_r(q)} G_s\left(\frac{u(x)-u(q)}{|x-q|}\right) \frac{dx}{|x-q|^{n+s}} 
					  + \int_{\Rn \setminus K_r^h(Q)} \frac{\chi_{\Co Sg (u)}(X) -\chi_{Sg(u)}(X)}{|X-Q|^{n+1+s}} \, dX.
					  }
The reader can check \cite{regularity} where formula \eqref{af23} was first introduce, \cite{bootstrap} where the formula for the non-zero gradient is given, \cite{abaty,lukes} for further discussion on the mean curvature.
\smallskip

We give the Euler-Lagrange equation mentioned here above in the strong form, both in the interior and at the boundary of the domain. The following result, stated in a condensed form in \cite[Appendix B]{bucluk}, is a consequence of \cite[Theorem 5.1]{nms}, where the equation is given in the viscosity sense, \cite{obss,bootstrap} where regularity is settled, and \cite{graph}, where the authors go from the viscosity to the strong formulation. 
\begin{theorem}\label{el} Let $\Omega\subset \Rn$ be an open set and let $E$ be locally $s$-minimal in $\Omega$. 
\begin{enumerate}
\item If $q\in \partial E$ and $E$ has either an interior or an exterior tangent ball at $q$, then there exists $r>0$ such that $\partial E \cap B_r(q)$ is $C^\infty$ and
\[ \I_s[E](x)=0 \quad \mbox{ for any } x\in \partial E\cap B_r(q).\]
In particular,
\[ \I_s[E](x) =0 \quad \H^{n-1}-\mbox{a.e. for } x\in \partial E \cap \Omega.\]   
\item If  $q \in \partial E \cap \partial \Omega$ and $\partial \Omega$ is $C^{1,1}$ in $B_{R_0}(q)$ for some $R_0>0$,  and $B_{R_0}(p)\setminus \Omega\subset \Co E$, then
\[ \I_s[E](q) \leq 0.\] 
Moreover, if there exists $R<R_0$ such that
\[ \partial E \cap \left(\Omega \cap B_r(q)\right) \neq \emptyset \qquad \mbox{ for any } r<R\]
then
\[ \I_s[E](q) = 0.\]  
\end{enumerate}
\end{theorem}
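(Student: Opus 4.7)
The starting point is the viscosity formulation of the Euler--Lagrange equation proved in \cite[Theorem 5.1]{nms}, which asserts that any locally $s$-minimal set $E$ satisfies $\I_s[E]=0$ on $\partial E \cap \Omega$ in the viscosity sense: whenever a smooth set touches $E$ from one side at a point, it controls the sign of $\I_s$ there.

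For part (1), the hypothesis that $E$ admits a one-sided tangent ball at $q$ is precisely the input needed to run an improvement-of-flatness argument. The regularity theorems of \cite{nms} and \cite{obss} upgrade $\partial E$ near $q$ to a $C^{1,\alpha}$ hypersurface; the bootstrap of \cite{bootstrap}, carried out on the graphical representation \eqref{af23} of the curvature operator, then promotes the regularity to $C^\infty$. Once $\partial E \cap B_r(q)$ is smooth, every one of its points admits tangent balls from both sides, so the two-sided viscosity inequality forces the classical identity $\I_s[E](x)=0$ pointwise, the principal value being absolutely convergent in view of the smoothness. The $\H^{n-1}$-a.e. statement is then immediate: by the interior regularity theorem, outside a singular set of Hausdorff dimension at most $n-3$ the surface is smooth, and the pointwise identity applies.

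For part (2), the $C^{1,1}$ regularity of $\partial \Omega$ near $q$ supplies a ball $B_\rho(y)\subset \Co \Omega$ tangent to $\partial \Omega$ at $q$; since $B_{R_0}(q)\setminus \Omega \subset \Co E$, this ball lies in $\Co E$ and provides an exterior tangent ball to $E$ at $q$. One builds admissible competitors by removing mass near $q$: set $F_\eps := E \setminus B_\eps(q)$ for small $\eps$. The admissibility check $F_\eps \setminus \Omega = E \setminus \Omega$ holds precisely because $B_\eps(q)\cap \Co\Omega \subset \Co E$ by hypothesis. Minimality yields $P_s(E,\Omega)\leq P_s(F_\eps,\Omega)$; computing the first variation of the $s$-perimeter and sending $\eps \to 0$, with the exterior tangent ball providing the control needed to make the principal value at $q$ meaningful, produces $\I_s[E](q)\leq 0$. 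This is the viscosity-to-strong passage worked out in \cite{graph}. For the equality case, pick a sequence $q_k \in \partial E \cap \Omega$ with $q_k \to q$, possible by the extra hypothesis. At those $q_k$ where $\partial E$ is regular, which is a full-measure subset of $\partial E \cap \Omega$ by the interior regularity theorem, part (1) gives $\I_s[E](q_k)=0$. The exterior tangent ball at $q$ dominates the $\Co E$ contribution uniformly near $q$, and a Fatou-type semicontinuity argument gives $\I_s[E](q)\geq \limsup_k \I_s[E](q_k)=0$, matching the upper bound.

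\textbf{The main obstacle} is the passage from viscosity to strong formulation at the boundary point $q$: the principal value $\I_s[E](q)$ need not be well-defined pointwise without some regularity of $\partial E$ at $q$, and the exterior tangent ball from $\Co \Omega$ is exactly what controls the singular half of the kernel. Turning the one-sided variational inequality into a pointwise bound, and then matching the one-sided semicontinuity of $\I_s$ with the interior information in the equality case, is the real technical content, drawn from \cite{graph} and \cite{bootstrap}.
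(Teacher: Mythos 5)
The paper does not actually prove this theorem: it records it as a condensed form of \cite[Appendix B]{bucluk}, obtained by combining the viscosity Euler--Lagrange equation of \cite[Theorem 5.1]{nms}, the regularity theory of \cite{obss,bootstrap}, and the viscosity-to-strong passage of \cite{graph}. Your plan assembles exactly these ingredients in the same order, so at the level of strategy it coincides with the paper's (purely citational) treatment; part (1) and the inequality in part (2) are acceptable as sketches (in (1), strictly speaking the tangent ball first forces the blow-up cone to be a half-space, and only then does improvement of flatness plus the bootstrap of \cite{bootstrap} apply, but the citations cover this).

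The one step that would fail as written is the equality case of (2). You invoke ``a Fatou-type semicontinuity argument'' to conclude $\I_s[E](q)\geq\limsup_k \I_s[E](q_k)=0$, but Fatou runs in the opposite direction here: the only uniform pointwise control available is a \emph{lower} bound on the integrand (coming from the exterior tangency supplied by $\Co \Omega$), and Fatou with a lower bound gives $\I_s[E](q)\leq\liminf_k \I_s[E](q_k)$, i.e.\ it merely re-derives the inequality you already have. To get $\I_s[E](q)\geq 0$ you need an integrable \emph{upper} bound, that is, uniform control of the positive singular part of the kernel at the moving base points $q_k$; since these are interior regular points whose two-sided tangent balls may degenerate as $q_k\to q$, no such majorant exists without exploiting the $C^{1,1}$ geometry of $\partial\Omega$ and the confinement $E\cap B_{R_0}(q)\subset\Omega$, which is precisely the finer splitting carried out in \cite{graph} and \cite[Appendix B]{bucluk}, not a bare Fatou argument. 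A smaller point: the existence of regular points of $\partial E\cap\Omega$ accumulating at $q$ does not follow from the nonemptiness hypothesis together with the dimension bound on the singular set alone; you should invoke the density estimates (any nonempty portion of $\partial E$ inside $\Omega$ carries positive $\H^{n-1}$ measure) to guarantee such a sequence.
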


This theorem provides the Euler-Lagrange equation almost anywhere in the interior of the domain $\Omega$ (at all regular points), and at the boundary of $\Omega$ with smooth boundary, as long as, roughly speaking, $E$ detaches from the boundary of $\Omega$ towards the interior, or $\partial E$ coincides with $\partial \Omega$ near the point $q$.

\section{The stickiness phenomena for nonlocal minimal surfaces}\label{stick}

In the nonlocal setting, the stickiness phenomena is typical. The situation drastically changes with respect to the classical objects since even in convex domains and with smooth exterior data, the $s$-minimal surface may attach to the boundary of the domain. 
A first example is given in \cite[Theorem 1.1]{boundary} showing stickiness to half-balls. We look for a nonlocal minimal set in a ball, having as exterior data a half-ring around that ball. A small enough radius of the ring will lead to stickiness. Precisely: 

\begin{theorem}\label{halfball}
For any $\delta>0$, denote
\[ K_\delta := \left(B_{1+\delta}\setminus B_1\right) \cap\{ x_n<0\},\]
and let $E_\delta$ be $s$-minimal for $P_s(\cdot, B_1)$ with $E\setminus B_1=K_\delta$. 
There exists $\delta_0:=\delta_0(n,s)>0$ such that for any $\delta \in (0,\delta_0]$ we have that
\[ E_\delta=K_\delta.\] 
\end{theorem}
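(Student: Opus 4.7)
The plan is a direct variational comparison: I will show that any admissible competitor differing from $K_\delta$ has strictly larger $s$-perimeter, provided $\delta$ is small. Let $E$ be any measurable set with $E\setminus B_1 = K_\delta$ and set $A := E\cap B_1$. Starting from Definition \ref{pssss} and expanding the cross-interactions (keeping in mind that $(\Co B_1)^2$-interactions cancel since the exterior data is fixed), one computes
\begin{equation*}
\Delta \;:=\; P_s(E,B_1)-P_s(K_\delta,B_1) \;=\; \L_s(A,B_1\setminus A) \;+\; \L_s(A,\Co B_1\setminus K_\delta) \;-\; \L_s(A,K_\delta),
\end{equation*}
and it suffices to prove $\Delta>0$ for every $A\subset B_1$ with $|A|>0$.

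The key structural ingredient is the reflection $\sigma$ across $\{x_n=0\}$ and the reflected half-shell $K_\delta^+:=\sigma(K_\delta)$. Up to negligible sets, $\Co B_1\setminus K_\delta$ is the disjoint union of $K_\delta^+$ and $\Co B_{1+\delta}$. For $y\in K_\delta$ (so $y_n<0$), an elementary distance comparison gives $|x-\sigma(y)|\le |x-y|$ whenever $x_n\ge 0$ and the reversed inequality whenever $x_n\le 0$. Writing $A=A^+\sqcup A^-$ according to the sign of $x_n$, the reflection lets us cancel the $A^+$-interactions with $K_\delta$ against the (larger) interactions with $K_\delta^+\subset \Co B_1\setminus K_\delta$, leaving
\begin{equation*}
\Delta \;\ge\; \L_s(A,B_1\setminus A) \;+\; \L_s(A,\Co B_{1+\delta}) \;-\; \L_s(A^-,K_\delta).
\end{equation*}

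The next step is to quantify both sides. For the unfavorable term, Fubini and the elementary estimate $\int_{B_1}|x-y|^{-n-s}\,dx \le C_{n,s}\,\mathrm{dist}(y,B_1)^{-s}$ for $y\notin B_1$ (obtained by polar coordinates around $y$), integrated over $K_\delta$ in spherical coordinates $y=(1+\varepsilon)\omega$ with $\varepsilon\in[0,\delta]$, give the clean bound $\L_s(A^-,K_\delta)\le C_{n,s}\,\delta^{1-s}$, uniform in $A$. For the favorable terms, the integrability at infinity yields a uniform lower bound $\L_s(A,\Co B_{1+\delta})\ge c_{n,s}|A|$ (for $\delta\in(0,1]$), while the fractional isoperimetric inequality $P_s(A,\Rn)\ge c_{n,s}|A|^{(n-s)/n}$, together with the absorption of the $O(\delta^{1-s})$ terms $\L_s(A,K_\delta)+\L_s(A,K_\delta^+)$ into $\L_s(A,\Co B_1)$, yields $\L_s(A,B_1\setminus A)\ge c_{n,s}|A|^{(n-s)/n}-O(\delta^{1-s})$. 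Combining these,
\begin{equation*}
\Delta \;\ge\; c_{n,s}\,\min\bigl(|A|,|A|^{(n-s)/n}\bigr) \;-\; C_{n,s}\,\delta^{1-s},
\end{equation*}
which is strictly positive as soon as $\delta\le\delta_0(n,s)$ and $|A|>0$. This forces $E_\delta=K_\delta$.

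I expect the main obstacle to lie in the regime where $A$ is small but tightly concentrated against $\partial B_1\cap\{x_n<0\}$, that is, where $A$ "hugs" $K_\delta$ from inside. In this regime a scaling analysis shows that both the favorable term $\L_s(A,B_1\setminus A)$ and the unfavorable term $\L_s(A^-,K_\delta)$ are of the same order $|A|^{(n-s)/n}$, so the bare comparison of constants in Step 3 is not enough: one has to track explicit geometric constants and use a Hardy-type refinement of the boundary behavior, or alternatively a more careful reflection of $A^-$ across $\partial B_1$ itself, to guarantee that the favorable constant wins for every "hugging" configuration, uniformly as $\delta\searrow 0$.
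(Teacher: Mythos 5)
Your setup is right (the identity $\Delta=P_s(E,B_1)-P_s(K_\delta,B_1)=\L_s(A,B_1\setminus A)+\L_s(A,\Co B_1\setminus K_\delta)-\L_s(A,K_\delta)=P_s(A,\Rn)-2\L_s(A,K_\delta)$, and the uniform bound $\L_s(A,K_\delta)\le\L_s(B_1,K_\delta)\le C_{n,s}\delta^{1-s}$ are both correct), but the argument as written does not close, and the obstacle you flag at the end is not a constant-tracking issue: it is a genuine failure of the method. First, the displayed bound in your third step is wrong: $\L_s(A,B_1\setminus A)=P_s(A,\Rn)-\L_s(A,\Co B_1)$, and $\L_s(A,\Co B_1\setminus K_\delta)$ is \emph{not} of size $O(\delta^{1-s})$ when $A$ hugs $\partial B_1$ from inside -- it is comparable to $P_s(A,\Rn)$ itself -- so no "absorption" gives $\L_s(A,B_1\setminus A)\ge c|A|^{(n-s)/n}-O(\delta^{1-s})$. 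What the comparison honestly yields is $0\ge\Delta\ge c_{n,s}|A|^{(n-s)/n}-C_{n,s}\delta^{1-s}$, i.e.\ only the volume bound $|A|\le C\delta^{n(1-s)/(n-s)}$, not $|A|=0$. Second, in the hugging regime (say $A$ a thin layer of thickness $\eps\ll\delta$ against $\partial B_1\cap\{x_n<0\}$) the favorable term $P_s(A,\Rn)\sim 2c\,\eps^{1-s}$ and the unfavorable term $2\L_s(A,K_\delta)\sim 2c\,\eps^{1-s}$ cancel at leading order; the sign of $\Delta$ is decided by lower-order corrections (curvature of the sphere, the finite thickness and half-shell geometry of $K_\delta$, the bulk term $c|A|$), so strict positivity of $\Delta$ for \emph{every} nonempty competitor $A$ simply cannot be extracted from isoperimetric-type bounds with unspecified constants, and your reflection across $\{x_n=0\}$ does not touch this, since the problematic configurations live in $\{x_n<0\}$.

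The missing ingredient is a mechanism that converts "small measure" into "zero measure", and this is exactly how the cited proof (\cite[Theorem 1.1]{boundary}) proceeds: one first gets the volume bound $|E_\delta\cap B_1|\le C\delta^{\frac{n(1-s)}{n-s}}$ by the very comparison you wrote, and then invokes the uniform density estimates for $s$-minimal sets (which exploit minimality against \emph{all} local perturbations, not just the single competitor $K_\delta$, and are adapted near $\partial B_1$ using the fact that the exterior datum $K_\delta$ is itself thin, $|K_\delta\cap B_r(p)|\lesssim\delta r^{n-1}$): if $E_\delta\cap B_1$ had positive measure there would be a point of $\partial E_\delta$ at which the density estimate forces $|E_\delta\cap B_r(p)|\ge c\,r^n$ at a macroscopic scale $r$, contradicting the volume bound once $\delta\le\delta_0(n,s)$. (Note also that the sliding-ball Euler--Lagrange argument sketched in this note for Example \ref{exxx} and Theorem \ref{thm} is not a substitute here: there the smallness is in $s$ with data fixed, whereas in Theorem \ref{halfball} the parameter $s$ is fixed and $\delta$ is small.) So either import the density estimates, or replace your Step 3 by a quantitatively exact analysis of the boundary-layer configurations; as it stands the proposal proves only that $|E_\delta\cap B_1|$ is small, not that it vanishes.
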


Not only does stickiness happen in  unexpected situations, what is more is that small perturbations of the exterior data may cause stickiness. We describe this phenomena with the example given in \cite[Theorem 1.4]{boundary}. It is well known that the only $s$-minimal set with exterior data given by the half-plane is the half-plane itself. But surprisingly, flat lines are ``unstable'' $s$-minimal surfaces in the following sense. Changing slightly the exterior data by adding two compactly contained ``bumps'', the $s$-minimal surface in the cylinder sticks to the walls of the cylinder, for a portion which is comparable to the height of the bumps.  The exact statement is the following.

 \begin{theorem}
 Fix $\eps_0>0$ arbitrarily small. Then there exists $\delta_0:=\delta_0(\eps_0)>0$ such that for any $\delta\in(0,\delta_0]$ the following holds true.
 Consider
 \[ H= \R \times (-\infty,0) \qquad F_- =(-3,-2)\times[0,\delta), \qquad F_+=(2,3)\times[0,\delta),\]
 and
 \[ F\supset H\cup F_-\cup F_+.\]
 Let $E$ be the $s$-minimal set in $(-1,1)\times \R$ among all sets such that $E=F$ outside of $(-1,1)\times \R$. Then
 \[ E\supseteq (-1,1) \times (-\infty, \delta^{\frac{2+\eps_0}{1-s}}).\]
 \end{theorem}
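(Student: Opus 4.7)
I argue by contradiction, combining monotonicity of the $s$-minimizer with the Euler--Lagrange equation at a carefully chosen first-escape point.

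Since $F \supset H$ and the flat half-plane $H$ is globally $s$-minimal, the standard comparison principle (monotonicity in the exterior datum) gives $E \supset H$, and in particular $(-1,1) \times (-\infty, 0) \subset E$. Set $h := \delta^{(2+\eps_0)/(1-s)}$, and suppose toward contradiction that $(-1,1) \times (-\infty, h) \not\subset E$. Letting
\[
h^* := \sup\{\, t \in [0, h] \colon (-1, 1) \times (-\infty, t) \subset E \,\} < h,
\]
a closedness argument produces a point $P = (p_1, h^*) \in \partial E$ with $p_1 \in [-1, 1]$. I focus on the case $p_1 \in (-1, 1)$; the boundary case $p_1 = \pm 1$ is handled analogously using the boundary inequality of Theorem \ref{el}(2), and is in fact more favorable since the ``missing'' mass just outside the strip is then adjacent to $P$.

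For $p_1 \in (-1, 1)$, the inclusion $(-1,1) \times (-\infty, h^*) \subset E$ provides an interior tangent half-space (hence an interior tangent ball) at $P$, so by Theorem \ref{el}(1) the boundary $\partial E$ is smooth near $P$ and $\I_s[E](P) = 0$. Comparing $E$ with the reference half-space $W := \{x_2 < h^*\}$, for which $\I_s[W](P) = 0$ since $P \in \partial W$, we write (in principal value sense)
\[
\I_s[E](P) \;=\; 2\!\int_{W \setminus E}\! \frac{dx}{|x - P|^{n+s}} \;-\; 2\!\int_{E \setminus W}\! \frac{dx}{|x - P|^{n+s}}.
\]
The support of the first integrand, $W \setminus E$, lies entirely outside the strip at heights in $[0, h^*)$, namely $\{|x_1| \geq 1,\ 0 \leq x_2 < h^*\}$ minus $F_\pm$, and makes a positive contribution. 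The support of the second, $E \setminus W$, consists of the upper caps $F_\pm \cap \{x_2 \geq h^*\}$ together with any portion of $E$ inside the strip rising above height $h^*$, and makes a negative contribution.

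The core of the proof is the quantitative balance between these two terms. The missing-mass integral has magnitude at least of order $h^*$ when $p_1$ is bounded away from $\pm 1$ (and grows like $(1-|p_1|)^{-s}$ near the lateral walls), whereas the bump excess is bounded by $C\delta$, reflecting volume $\sim \delta$ at distance $\sim 1$. The remaining interior excess is controlled using the $C^{1,\alpha}$ regularity of $\partial E$ at $P$ together with an auxiliary upper barrier that confines $E \cap \Omega$ to a slab of thickness $O(\delta)$, itself obtained by a dual comparison argument applied to the complementary exterior datum $\Co F \supset \Co(H \cup F_\pm)$. The exponent $h = \delta^{(2+\eps_0)/(1-s)}$ is calibrated so that for $h^* < h$ the Euler--Lagrange equality between the two integrals fails, forcing $\I_s[E](P) \neq 0$ and yielding the desired contradiction.

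The main obstacle is precisely the sharpness of this balance near the lateral walls $|x_1|=\pm 1$, where the missing-mass integral becomes singular as $(1-|p_1|)^{-s}$. Handling a first-escape point $P$ close to the wall requires either a scaling/blow-up analysis at the corner $(p_1,h^*)\to(\pm 1,0)$ or a direct use of Theorem \ref{el}(2) on the boundary component, together with the bound on $E$ above mentioned. The arbitrary $\eps_0>0$ in the exponent absorbs lower-order logarithmic corrections that appear in the extremal regimes $s \to 1$ and $s \to 0$.
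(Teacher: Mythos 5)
Your scheme has a fatal gap exactly where the theorem lives: at the lateral walls. If you track your own size estimates, at an interior first-escape point $P=(p_1,h^*)$ with $|p_1|<1$ bounded away from $\pm1$, the ``missing mass'' is of order $h^*$ while the bump excess is of order $\delta$, so for $h^*\ll\delta$ you would get $\I_s[E](P)<0$, contradicting $\I_s[E](P)=0$ \emph{regardless} of the calibration $h=\delta^{(2+\eps_0)/(1-s)}$; your balance, if it closed, would ``prove'' stickiness up to height $c\delta$, which is not what happens and signals that the mechanism is misidentified. The reason it does not close is that nothing forces the escape point to be interior: in the actual configuration the infimum of the minimal graph over $(-1,1)$ is attained (in the limit) at the walls $|p_1|=1$, where the surface detaches, and this is precisely the case you dismiss as ``more favorable''. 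There, Theorem \ref{el}(1) is unavailable, and Theorem \ref{el}(2) does not apply either: its hypothesis $B_{R_0}(q)\setminus\Omega\subset\Co E$ fails at a wall point of height $h^*<\delta$, because the half-plane datum $H$ lies in $E$ just outside the strip below height $0$, arbitrarily close to $q$. Even where an Euler--Lagrange \emph{inequality} of the form $\I_s[E](q)\le 0$ were available, it has the same sign as the estimate your balance produces, so no contradiction can be extracted. Secondary issues: the bound ``bump excess $\le C\delta$'' and your auxiliary upper barrier require first reducing to $F=H\cup F_-\cup F_+$ by a comparison/monotonicity step (the theorem allows arbitrary $F\supset H\cup F_\pm$), and the quantitative control of the interior excess near $P$ (your appeal to $C^{1,\alpha}$ regularity at $P$) comes with constants that are not uniform without an improvement-of-flatness argument you do not supply.

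The paper's proof (from \cite[Theorem 1.4]{boundary}) follows a different and more robust route: it constructs an explicit barrier from below --- a one-parameter family of sets with a sign condition on their fractional mean curvature, whose height inside the strip is of order $\delta^{(2+\eps_0)/(1-s)}$ --- and slides it upward beneath $E$; at a first touching point (which by construction can be arranged to be an interior contact where the curvature comparison is legitimate) one gets the contradiction. In that approach the exponent $\frac{2+\eps_0}{1-s}$ is dictated by the curvature computation for the barrier, not by a pointwise balance at a first-escape point of $E$; this is why the sliding-barrier construction succeeds where the direct Euler--Lagrange bookkeeping at $\partial E$, as you propose it, cannot be closed at the walls.
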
 

The proof of this theorem is very interesting in itself, carried out by building a suitable barrier from below. 

As a matter of fact, taking into account the dichotomy in Theorem \ref{dich}, it is clear that this unstable behavior appears to be typical. This is the case: even in the plane, if we start with a $s$-minimal surface which is continuous across the boundary, it is enough to perturb slightly the exterior data in order to get stickiness. Indeed, consider $v\colon \R \to \R$ smooth enough, fixed outside of the interval $(0,1)$, which plays the role of the exterior data, and let $u\colon \R \to \R$, $s$-minimal with respect to $v$, be continuous across the boundary. Then smoothly perturbing $v$ outside of the cylinder will produce a $s$-minimal graph which sticks to the cylinder. 
 This generic behavior is better explained in \cite[Theorem 1.1]{sticks}. 

\begin{theorem}
Let $\alpha \in (s,1)$,  the function $v\in C^{1,\alpha}(\R), $
and
$ \varphi \in C^{1,\alpha}(\R)$ non-negative and not identically zero, such that $ \varphi=0 \mbox{ in } (-d,d+1) \mbox{ for some } d>0.$ Consider then $u\colon \R \times [0,\infty) \to \R$ such that
\[ u(x_1,t)= v(x_1)+t\varphi(x_1), \qquad \,  t\geq 0,\,  x_1\in \R\setminus (0,1)\]
and suppose that
the set
\[ E_t= \big\{ (x_1,x_2)\in \R^2 \, \big| \, x_2<u(x_1,t)\big\} \]
is locally $s$-minimal in $(0,1)\times \R$. 
Assume that
\[ \lim_{x_1\searrow 0} u(x_1,0)= v(0).\]
Then for any $t>0$
\[ \limsup_{x_1\searrow 0} u(x_1,t)>v(0).\]  
\end{theorem}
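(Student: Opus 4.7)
I would argue by contradiction: suppose that for some $t>0$ one has $\limsup_{x_1\searrow 0}u(x_1,t)\le v(0)$. The first task is to reduce to the equality case. Since the exterior data $v+t\varphi$ coincides with $v$ on the interval $(-d,d+1)$, where $v\in C^{1,\alpha}$, Theorem \ref{dich} applies to $E_t$ and the right limit $\ell:=\lim_{x_1\searrow 0}u(x_1,t)$ exists (so the $\limsup$ is actually a limit). On the other hand, $\varphi\ge 0$ and $t>0$ give $u(\cdot,t)\ge u(\cdot,0)$ on $\R\setminus(0,1)$, hence $E_t\supseteq E_0$ outside the cylinder. A standard comparison principle for $s$-minimal sets (order is preserved by minimization when exterior data are ordered) propagates this inclusion inside $(0,1)\times\R$, so $u(\cdot,t)\ge u(\cdot,0)$ on all of $\R$ and thus $\ell\ge v(0)$. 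Combined with the contradiction hypothesis, this forces $\ell=v(0)$, so both $E_0$ and $E_t$ are continuous across $\partial\Omega$ at the common boundary point $q=(0,v(0))$.

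The next step is to apply the Euler--Lagrange equation at $q$. Case $(1)$ of Theorem \ref{dich} provides $C^{1,(1+s)/2}$ regularity of $u(\cdot,0)$ and $u(\cdot,t)$ up to $x_1=0$ from the right. By the interior part of Theorem \ref{el}, the fractional mean curvature satisfies $\I_s[E_0](x)=0$ (respectively $\I_s[E_t](x)=0$) at every point $x$ of $\partial E_0\cap\Omega$ (resp. $\partial E_t\cap\Omega$). The boundary regularity just mentioned lets us pass to the limit in these identities as $x_1\searrow 0$ along the graph, yielding
\[ \I_s[E_0](q)=\I_s[E_t](q)=0. \]

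The contradiction comes from strict monotonicity of the fractional mean curvature. Since $E_t\supseteq E_0$ globally, and the difference
\[ E_t\setminus E_0 \supseteq \bigl\{(x_1,x_2):\varphi(x_1)>0,\ v(x_1)\le x_2<v(x_1)+t\varphi(x_1)\bigr\} \]
has strictly positive two-dimensional measure (because $\varphi\not\equiv 0$) and lies at distance at least $d$ from $q$, a direct computation gives
\[ \I_s[E_t](q)-\I_s[E_0](q)=-2\int_{E_t\setminus E_0}\frac{dX}{|X-q|^{2+s}}<0, \]
with no principal-value issue because the integrand is bounded away from $q$. This contradicts the equality of the two curvatures obtained in the previous step, proving $\ell>v(0)$.

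\textbf{Main obstacle.} The delicate point is the passage from the interior Euler--Lagrange equation to its validity at the boundary point $q$: the one-sided hypothesis of Theorem \ref{el}$(2)$ does not hold here, since the set $E_t$ occupies roughly half of any small ball around $q$ on both sides of $\partial\Omega$. The workaround is to exploit the boundary $C^{1,(1+s)/2}$ regularity supplied by the no-stickiness alternative in Theorem \ref{dich}, which makes $\I_s[E_t]$ continuous up to $q$ along the graph, so that the equality $\I_s[E_t]=0$, known in the interior, extends by continuity. Everything else (comparison principle, sign of the kernel integral) is routine.
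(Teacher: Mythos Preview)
The survey paper does not actually prove this theorem; it merely quotes it from \cite[Theorem~1.1]{sticks}. So there is no ``paper's own proof'' to compare against. Your outline is, however, precisely the strategy used in the original reference: reduce to the continuous case via comparison, invoke the boundary $C^{1,\frac{1+s}{2}}$ regularity supplied by the dichotomy theorem to make sense of the Euler--Lagrange equation at the corner point $q=(0,v(0))$, and then derive a contradiction from the strict monotonicity of the fractional mean curvature under set inclusion. In that sense your proposal is correct and on target.

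One imprecision is worth flagging. You assert that the integrand in
\[
\I_s[E_t](q)-\I_s[E_0](q)=-2\int_{E_t\setminus E_0}\frac{dX}{|X-q|^{2+s}}
\]
``is bounded away from $q$''. That is true for the explicit subset $\{(x_1,x_2):\varphi(x_1)>0,\ v(x_1)\le x_2<v(x_1)+t\varphi(x_1)\}$, which indeed sits at distance at least $d$ from $q$; but the full symmetric difference $E_t\setminus E_0$ may well extend \emph{inside} the cylinder $(0,1)\times\R$ and accumulate at $q$, since comparison gives only $u(\cdot,t)\ge u(\cdot,0)$ there, not equality. The formula is nonetheless justified: both curvatures are honest principal values because $\partial E_0$ and $\partial E_t$ are $C^{1,\frac{1+s}{2}}$ at $q$ (they share the tangent line of slope $v'(0)$ from the left and from the right by the boundary regularity), and the $C^{1,\frac{1+s}{2}}$ tangency of the two graphs forces $|(E_t\setminus E_0)\cap B_r(q)|\lesssim r^{2+\frac{1+s}{2}}$, which makes the difference integral absolutely convergent. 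The strict negativity then follows from the far-away piece you identified. So the gap is cosmetic rather than structural, but you should state the convergence argument rather than claim the support stays away from $q$.
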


\section{Complete stickiness in highly nonlocal regimes}

A very nice example of complete stickiness, that is when the minimal surface attaches completely to the boundary of the domain,  was recalled in Theorem \ref{halfball}. 
On the one hand, complete stickiness depends on how ``large''  the exterior data is. On the other hand, fixing the exterior data, we obtain complete stickiness  for $s$ small enough.
 Indeed, as $s$ gets smaller, the nonlocal contribution prevails and the effects are quite surprising. In this section,  we sum up  some results from the literature related to highly nonlocal regimes, and provide examples of complete stickiness both for nonlocal minimal sets and graphs.

To describe the ``purely nonlocal contribution'', one makes use of the set function introduced in \cite{asympt1}
\eqlab{ \label{alpha} 
\alpha(E)= \lim_{s \searrow 0} s  \int_{\Co B_1}\frac{ \chi_E(x)} {|x|^{n+s}} \, dx.}
As \cite[Examples 2.8, 2.9]{asympt1} show, it is possible to have smooth sets (hence with finite $s$-perimeter for any $s$) for which the limit in \eqref{alpha} does not exist. In this case, neither $\lim_{s\searrow 0} s P_s(E,\Omega)$ exists, since the two limits are intrinsically connected. Whenever this happens, one can use $\limsup$ and $\liminf$ as in \cite{bucluk}.  For simplicity, we use however $\alpha$ as defined in \eqref{alpha}, and notice that the results in this section hold for $\limsup  (\liminf)$ instead of the limit, whenever the limit does not exist.

The fact that this set function well describes the behavior of the perimeter as $s$ goes to $0$ is given in \cite[Theorem 2.5]{asympt1}.

\begin{theorem}
Let $\Omega\subset \Rn$ be a bounded open set with $C^{1,\gamma}$ boundary for some $\gamma \in (0,1)$.  
Suppose that $P_{s_0}(E,\Omega)$ is finite for some $s_0\in (0,1)$. Then
\eqlab{ \label{pers0} \lim_{s\searrow 0} s P_s(E,\Omega)= \alpha(\Co E) |E \cap \Omega| + \alpha (E) |\Co E \cap \Omega|.}
\end{theorem}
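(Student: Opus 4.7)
The idea is to decompose the perimeter as in \eqref{pss} into a ``local'' piece $\L_s(E\cap\Omega,\Co E\cap\Omega)$ and two ``cross'' pieces, discard the local piece thanks to $P_{s_0}(E,\Omega)<\infty$, and for each cross piece extract the pointwise $s\searrow 0$ asymptotic by a far-field expansion, then pass the limit under the outer integral by dominated convergence.

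\textbf{The local piece.} Since $\Omega$ is bounded and $s<s_0$, for $x,y\in\Omega$ we have $|x-y|\leq\mathrm{diam}(\Omega)$, so writing $|x-y|^{-n-s}=|x-y|^{s_0-s}|x-y|^{-n-s_0}$ gives $|x-y|^{-n-s}\leq C\,|x-y|^{-n-s_0}$ with $C=C(s_0,\Omega)$. Hence $\L_s(E\cap\Omega,\Co E\cap\Omega)\leq C P_{s_0}(E,\Omega)<\infty$ uniformly in $s\in(0,s_0)$, and multiplying by $s$ produces a quantity that vanishes in the limit.

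\textbf{The cross pieces.} Fix $x\in\Omega$ and set
\begin{equation*}
g_s(x):=\int_{\Co E\cap\Co\Omega}\frac{dy}{|x-y|^{n+s}}.
\end{equation*}
For $R>2|x|$ with $\Omega\Subset B_R$, split $g_s(x)=A_s+B_s$, with $A_s$ the integral over $\Co E\cap\Co\Omega\cap B_R$ and $B_s$ the integral over $\Co E\cap\Co B_R$. Since $y\in\Co\Omega$ forces $|x-y|\geq d(x,\Co\Omega)>0$,
\begin{equation*}
A_s\leq\omega_{n-1}\int_{d(x,\Co\Omega)}^{2R}r^{-1-s}\,dr=\frac{\omega_{n-1}}{s}\bigl(d(x,\Co\Omega)^{-s}-(2R)^{-s}\bigr),
\end{equation*}
so that $sA_s\to 0$ as $s\searrow 0$. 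For $B_s$, the expansion $|x-y|^{-n-s}=|y|^{-n-s}(1+O(|x|/|y|))$, uniform for $s\in(0,1)$ and $|y|\geq 2|x|$, yields
\begin{equation*}
sB_s=(1+O(|x|/R))\,s\int_{\Co E\cap\Co B_R}\frac{dy}{|y|^{n+s}}.
\end{equation*}
Because $s\int_{\Co E\cap(B_R\setminus B_1)}|y|^{-n-s}\,dy\to 0$, the definition \eqref{alpha} of $\alpha$ forces the right-hand side to tend to $(1+O(|x|/R))\,\alpha(\Co E)$; sending $R\to\infty$ gives $sg_s(x)\to\alpha(\Co E)$ pointwise in $x\in\Omega$.

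\textbf{Passing to the outer integral and main obstacle.} To apply dominated convergence on $E\cap\Omega$, observe
\begin{equation*}
s\,g_s(x)\leq s\int_{|x-y|\geq d(x,\Co\Omega)}\frac{dy}{|x-y|^{n+s}}=\omega_{n-1}\,d(x,\Co\Omega)^{-s}\leq\omega_{n-1}\bigl(1+d(x,\partial\Omega)^{-s_0}\bigr).
\end{equation*}
The $C^{1,\gamma}$ regularity of $\partial\Omega$ (in fact Lipschitz suffices) gives $d(\cdot,\partial\Omega)^{-s_0}\in L^1(\Omega)$ for $s_0<1$, so dominated convergence yields $s\int_{E\cap\Omega}g_s(x)\,dx\to\alpha(\Co E)|E\cap\Omega|$. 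The symmetric argument on the other cross piece produces $\alpha(E)|\Co E\cap\Omega|$, and summing with the local-piece step gives \eqref{pers0}. The delicate step---the main obstacle---is the cross-piece analysis: the limits $s\searrow 0$ and $R\to\infty$ do not commute, and one must take $s\searrow 0$ first (to kill the annular contribution $\Co\Omega\cap B_R$) and only then $R\to\infty$ (so that the far-field integral asymptotically matches the origin-centered integral defining $\alpha$).
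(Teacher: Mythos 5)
The paper itself offers no proof of this statement---it is quoted from \cite[Theorem 2.5]{asympt1}---and your argument is correct and runs along the same standard lines as that source: discard the local part $\L_s(E\cap\Omega,\Co E\cap\Omega)$ using the finiteness of $P_{s_0}(E,\Omega)$, establish the pointwise limits $s\,g_s(x)\to\alpha(\Co E)$ by taking $s\searrow 0$ at fixed $R$ before sending $R\to\infty$ (the non-commutation you rightly flag), and conclude by dominated convergence with the majorant $\omega_n\bigl(1+d(x,\partial\Omega)^{-s_0}\bigr)$, which is exactly where the Lipschitz/$C^{1,\gamma}$ regularity of $\partial\Omega$ is needed. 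Two cosmetic remarks: with the paper's convention $\omega_n:=\H^{n-1}(\partial B_1)$ your polar-coordinate constants should read $\omega_n$ rather than $\omega_{n-1}$, and your proof (like the statement) tacitly assumes that the limit defining $\alpha(E)$, and hence $\alpha(\Co E)=\omega_n-\alpha(E)$, exists, consistently with the convention the paper fixes just before the theorem.
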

If one goes back to \eqref{pss}, one gets that the local contribution  completely vanishes in the limit
\[ \lim_{s\searrow 0} s P_s^{L}(E,\Omega)=0.\]
On the other hand, in the limit, the nonlocal part gives a combination of the purely nonlocal contribution, expressed in terms of the function $\alpha$, and the Lebesgue measure of the set (or its complement) in $\Omega$. Recalling also the limit as $s\nearrow 1$ in \eqref{sto1}, one could say that in some sense, the fractional perimeter interpolates between the perimeter of the set and its volume. It is even clearer if we take, for example, a set $E$ bounded, with finite perimeter, contained in $\Omega$. Then \eqref{pers0} and \eqref{sto1} give that
\[ \lim_{s\searrow 0} s P_s(E,\Omega) =\omega_n |E|  \]
and
\[ \lim_{s \nearrow 1} (1-s)P_s(E,\Omega)= \frac{\omega_{n-1}}{n-1} P(E,\Omega).\] 

A second element describing purely nonlocal regimes comes from the mean curvature operator. What we discover is that, as $s$ decreases towards zero, in the limit the mean curvature operator forgets any local information it had detained on the local geometry of the set, and measures only the nonlocal contribution of the set. More precisely
\eqlab{ \label{mcs0}\lim_{s\searrow 0}s \I_s[E](p)= \omega_n -2 \alpha (E),}
for any $p\in \partial E$ and whenever $\partial E$ is $C^{1,\gamma}$ around $p$, for some $\gamma \in (0,1]$.  

We provide a few more details on the set function $\alpha(E)$, which are useful in the sequel. 
Denote for $q\in \Rn$ and $R>0$
\eqlab{ \label{alphaeq}
\alpha_s (E,R,q) = \int_{\Co B_R(q) } \frac{\chi_E(x)}{|x-q|^{n+s} }\, dx.}
Then it holds that
\[ \lim_{s \searrow 0} s\alpha_s(E,R,q)= \alpha(E).\] 
In particular, this says that $\alpha$ represents indeed the contribution from infinity, as it does not depend neither on the fixed point $q\in \Rn$, nor on the radius we pick. So, to compute the contribution from infinity of a set it is enough to compute its weighted measure outside of a ball of any radius, centered at any point. For more details and  examples, check \cite[Section 4]{bucluk}. We just recall here  a couple of examples, which are therein explained: the contribution from infinity
\begin{itemize}
\item of a bounded set is zero,
\item of a cone is given by the opening of the cone,
\item of a slab is zero,
\item of the supergraph of a parabola is zero,
\item of the supergraph of $x^3$ in $R^2$ is $\pi$,
\item of the supergraph of a bounded function is $\omega_n/2$.
\end{itemize}
 
\subsection{Complete stickiness}

We start this subsection  with an example. As we have already mentioned, the only $s$-minimal set having as the half-space as exterior data is the half-space itself, for any value of $s$. On the other hand, let us try to understand what happens if we minimize the perimeter in $B_1\subset \R^2$, using the first quadrant of the plane as exterior data. As (\cite[Theorem 1.3]{boundary}) shows, there exists some small $s_0$ such that for all   $s\in (0,s_0)$ the $s$-minimal surface sticks to $\partial B_1$, and the $s$-minimal set is exactly the first quadrant  of the plane, deprived of its intersection with $B_1$.  
This example still holds if, instead of the ball, one picks a domain $\Omega$, bounded, with smooth boundary and takes as the exterior data the whole half-plane, deprived of some small cone, at some distance from $\Omega$. For simplicity, we give an example that one can keep in mind, before we introduce the main theorem of the section.

\begin{example}\label{exxx}
Let for any given $h\geq 1$ and  $\theta  \in (0,\pi/2)$
\bgs{ \Sigma := \Big\{ (x_1,x_2)\in \R^2 \; \Big|  x_2\geq \Big((x_1-h)\tan \theta \Big)_+ \Big\}}
and let $E_0:= \Sigma \setminus B_1$.  
Then there exists $s_0 >0$ such that for any $s \in (0,s_0)$, the set $E_s$ that minimizes $P_s(\cdot, B_1)$ with respect to $E_0$, is empty inside $B_1$, 
 or in other words
 \[ E_s =\Sigma\setminus B_1.\]
\end{example}

\begin{proof}[Sketch of proof]
 We argue by contradiction and suppose that there is some boundary of $E$ inside $\Omega$. We follow the next steps.
\begin{enumerate}
	\item Step 1. We prove that, if there exists an  exterior tangent ball at a point on the boundary of $E\cap \bar B_1$, of some suitable (uniform) radius, the fractional mean curvature of $E$ at that point is strictly positive. 
	\item Step 2. We prove that there exists some ball, compactly contained in $B_1$, which is exteriorly tangent to the boundary of $E$. 
	\item Step 3. We obtain a contradiction by comparing Step 1 with the Euler-Lagrange equation (that holds, thanks to Step 2, check Theorem \ref{el}).
	\end{enumerate}
\smallskip

\noindent \textbf{Step 1}. 
We have set out to prove that, if there exists an exterior tangent ball at $q\in \partial E\cap \bar B_1$, there exists $\tilde C>0$ such that
\[ \I_s[E](q) = \int_{\R^n} \frac{ \chi_{\Co E}(x) -\chi_E(x)}{|x-q|^{n+s}} \, dx \geq \tilde C.\]
Let $\delta$ be a radius (that will be chosen as small as we want in the sequel), and $p\in B_1$ such that $B_\delta(p)$ is  compactly contained in $B_1$, exterior tangent to $\partial E$ at $q$, that is
\[ B_\delta(p) \subset\Co E\cap B_1, \quad q \in \partial E \cap \partial B_\delta(p).\]
Denote $p'$ as the point symmetric to $p$ with respect to $q$,  
\[ D_\delta :=B_\delta(q) \cup B_\delta(p'), \]
$K_\delta$ as the convex hull of $D_\delta$ and
\[ P_\delta :=K_\delta \setminus D_\delta.\]
Let $R>4$ be as large as we want, to be specified later on.

We split the integral into four different parts and estimate each one.

\begin{enumerate}
\item The contribution in $D_\delta$ is non-negative, since $E$ covers ``less'' of $D_\delta$ than of its complement, i.e.
\[ \chi_{\Co E\cap D_\delta}\geq \chi_{E \cap D_\delta},\]
hence
\[ \int_{D_\delta} \frac{ \chi_{\Co E} (x) -\chi_{E}(x)}{|x-q|^{n+s} } \geq 0.\]
\item The contribution on $P_\delta$ is bounded from below thanks to \cite[Lemma 3.1]{graph},
\[ \int_{P_\delta} \frac{ \chi_{\Co E} (x) -\chi_{E}(x)}{|x-q|^{n+s} } \geq -C_1\delta^{-s}.\]
\item As for the contribution in $B_R(q) \setminus K_\delta$, we have that
\bgs{
  &\bigg|\int_{B_R(q) \setminus K_\delta} \frac{ \chi_{\Co E} (x) -\chi_{E}(x)}{|x-q|^{n+s} }\bigg| 
  \leq 
 \bigg |\int_{B_R(q) \setminus B_\delta(q)} \frac{ \chi_{\Co E} (x) -\chi_{E}(x)}{|x-q|^{n+s} } \bigg|
  \\
  \leq &\; \omega_n \int_{\delta}^R \rho^{-1-s} \, d\rho =\omega_n \frac{ \delta^{-s}-R^{-s}}s .}
  \item We prove that the contribution of $\Co B_R(q)$ is bounded by
  \[ \int_{\Co B_R(q)} \frac{ \chi_{\Co E} (x) -\chi_{E}(x)}{|x-q|^{n+s} } \geq \frac{C (\theta)R^{-s}}{s},\]
  for some constant $C(\theta)\in (0,\omega_n/2)$, in particular independent on $q$. 
  
\end{enumerate}

Of course, $\omega_n$ is actually $\omega_2$, but we keep the above formulas in this general from since the estimates  hold in any dimension.

 Putting the four contributions together, our goal is to obtain that
 \[ s\I_s[E](q) \geq \left(C(\theta) +\omega_n\right) R^{-s}- \delta^{-s}( C_1s +\omega_n) \geq \frac{C(\theta)}{8}>0.\]
 Since $R^{-s} \nearrow 1$ as $s\searrow 0$, there exists $s$ small enough such that
 \[ C(\theta) R^{-s} \geq \frac{C(\theta)} 2, \quad \omega_n R^{-s} \geq \omega_n -\frac{C(\theta)}4 , \quad C_1 s \leq \frac{C(\theta)}{16} \]
 thus
\[ s\I_s[E](q) \geq  \frac{C(\theta)}4 + \omega_n  -\delta^{-s}\left(\omega_n +\frac{C(\theta)}{16} \right)\geq \frac{C(\theta)}8,\]
if and only if
 \eqlab{ \label{dss}\delta \geq   e^{\frac{-1}{s } \log \frac{8\omega_n+C(\theta)}{8\omega_n +C(\theta)/2} }:=\delta_s.}
Notice that $\delta_s <1$, hence for any $s\in(0,\sigma)$ taking $ \delta>\delta_\sigma,$
\[ \qquad \delta^{-s} <\delta^{-\sigma}<\delta_{\sigma}^{-\sigma},\]
hence for any radius greater than $\delta_\sigma$ the $s$-curvature will remain strictly positive for any $s<\sigma$.
 We can conclude that
 there exists $\sigma$ such that, having at  $q$ an exterior tangent ball of radius (at least) $\delta_\sigma$, implies that
 \[ s\I_s[E](q) \geq \frac{C(\theta)}8 >0 \quad \mbox{ for all } \; s\leq \sigma.\]
 \smallskip

\noindent \textbf{Step 2}.   To carry out Step 2, we prove that there exists an exterior tangent ball to $\partial E$, compactly contained in a ball slightly smaller than $B_1$.
We denote 
	\[ B_1^+ = B_1\cap \{x_2>0\} , 
		\quad B_1^- =B_1 \cap \{ x_2<0\}, 
		.
	\] 
First of all, we notice by comparison with the plane, that
\[ B_1^-  \subset \Co E.\]
Otherwise, we start moving upwards the semi-plane $\{ x_2 \leq 2\}$ until we first encounter $\partial E \cap \bar B_1^-$ at  $p=(p_1,p_2)$. Since
\[ \Co E \supset \Co \{x_2>p_2\}, \qquad E\subset  \{x_2>p_2\}\] it holds that 
\[ \I_s[E](p) = \I_s[E](p)-\I_s[\{x_2>p_2\}] (p)\geq 0,\] 
and since $E$ is minimal, it holds in the strong sense that 
\[ \I_s[E](p) \leq 0. \]
This would imply that $E=\{x_2<p_2\}$ by the maximum principle (see \cite[Appendix B]{bucluk}), which is false.\\
For some $r_0>0$ and $s$ small enough (notice that $\delta_s \searrow 0$ as $s\searrow 0$, see \eqref{dss}), and $x\in B_1^{-}$, consider $\delta_s<\delta<r_0/4$ such that
\[ B_\delta(x) \subset B_{1-r_0/2}^- \subset \Co E .\]
 We remark that for a domain $\Omega$ with $C^2$ boundary, $r_0$ is chosen to be  such that
 \eqlab{ \label{r0} \mbox{ the set  } \; \big\{ x\in \Omega \; \big| \; d(x,\partial \Omega)\leq r_0 \big\} \mbox{ still has } C^2 \mbox{ boundary}
 } 
(check \cite[Appendix A.2]{bucluk}, \cite[Appendix B]{MinGiusti} for instance). 
Suppose now by contradiction that $E$ is not empty inside $B_{1-r_0/2}$, hence
\[|E \cap B^+_{1-r_0/2}|>0, \quad \mbox{ in particular} \quad \exists\;  y\in E\cap B^+_{1-r_0/2} .\]

We consider the segment connecting $x$ and $y$ inside $B_{1-r_0/2}$, and we move the ball of radius $\delta$ along this segment starting from $x$, until we first hit the boundary of $E$. We denote by $q$ the first contact point (for a more detailed discussion, see \cite[Lemma A.1]{bucluk}), i.e. for $p\in B_{1-r_0/2}^+$
\[ q\in \partial E \cap \partial B_\delta(p), \qquad B_\delta(p)\subset \Co E.\]
 
\smallskip

 \noindent \textbf{Step 3}. 
 Since at $q$ there exists an exterior tangent ball of radius $\delta$, we use the Euler-Lagrange equation in the strong form and have that 
 \[ \I_s[E](q)=0.\]
 This provides a contradiction  with Step 1, and it follows that 
 \[ |E\cap B_{1-r_0/2} | =0.\]
 Now it is enough to ``expand'' $B_{1-r_0/2}$ towards $B_1$. If there is some of $E$ in the annulus $B_1 \setminus B_{1-r_0/2}$, one can find an exterior tangent ball at $\partial B_{1-\rho} \cap \partial E$ for some $\rho \in (0,r_0/2)$ and use again the fact that the curvature is both strictly positive and equal to zero to obtain a contradiction. This would conclude the proof.
 
 \medskip
 
 It remains to prove that for $q\in \partial E \cap \bar B_1$
  \[ \int_{\Co B_R(q) } \frac{ \chi_{\Co E} (x) -\chi_{E}(x)}{|x-q|^{2+s} } \geq \frac{C (\theta)R^{-s}}{s},\]
  for some constant $C(\theta)$ not depending on $q$. 
   We do this with a geometric argument. We want to build a parallelogram of center $q$, and take $R$ as large as we need, such as to have the parallelogram in the interior of $B_R(q)$. Then we use symmetry arguments to obtain the conclusion.
   
    We build our parallelogram in the following way, check Figure \ref{par}. We denote 
   \[ l_1= (x_1-h) \tan \theta\]
   and draw through $q$ the parallel to the bisecting line of the angle complementary to $\theta$. We call $p$ the intersection between this parallel line and $l_1$, and $p'$ the point symmetric to $p$ with respect to $q$, that sits on this parallel line. We draw through $p,p'$ two lines parallel to the axis $Ox$. The parallelogram we need is formed by the intersections of these last drawn parallels to $Ox$, $l_1$ and the parallel to $l_1$ through $p'$. We choose $R$ such that this parallelogram stays in the interior of $B_R(q)$, remarking that $R$ depends only on $\theta,h $, and we can make this choice independent on $q\in \bar B_1$. In particular, one can take
   \[ R:=\max\{ \max_{x\in \bar B_1}d (x,l_1) \cot \frac{\theta}4, 4\}. \]
    \begin{center}
\begin{figure}[htpb]
	\centering
	\includegraphics[width=0.9\textwidth]{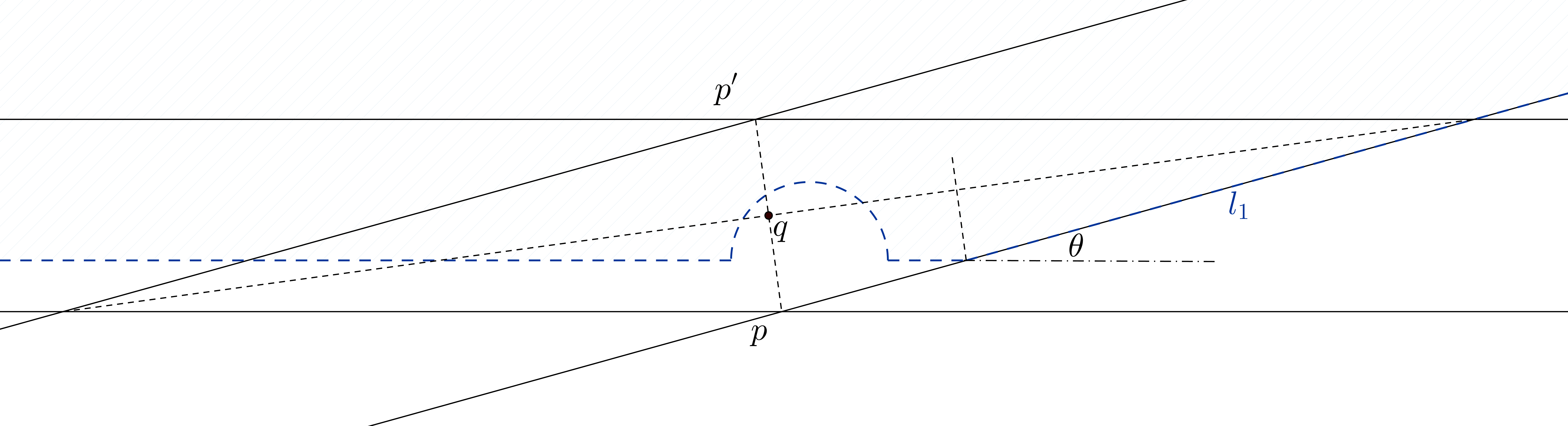}
	\caption{The geometric construction in Example \ref{exxx}}   
	\label{par}
\end{figure} 
	\end{center}
   This ensures that both $B_1$ and the parallelogram we built  are in $B_R(q)$. We identify six ``corresponding'' regions, which by symmetry produce some nice cancellations. Not to introduce heavy notations, the reader can check directly Figure \ref{par1}. 
  \begin{center}
     \begin{figure}[htpb]
	\centering
	\includegraphics[width=0.9\textwidth]{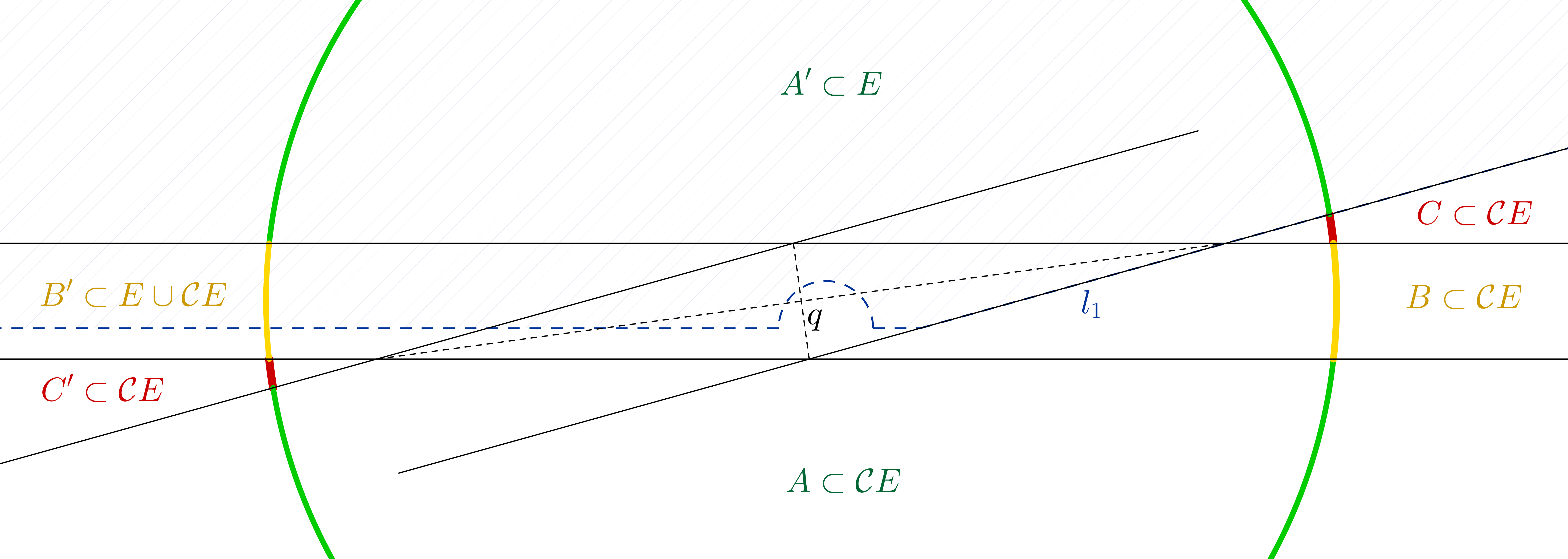}
	\caption{The geometric construction in Example \ref{exxx}}   
	\label{par1}
\end{figure} 
	\end{center}
	Notice that
   \bgs{ & A\subset \Co E, \quad A' \subset E,   
   \\
   & B \subset \Co E, \quad B' \subset E \cup \Co E,
   \\
   & C\cup C' \subset \Co E}
   and accordingly we have that
   \bgs{\int_{\Co B_R(q) } \frac{ \chi_{\Co E} (x) -\chi_{E}(x)}{|x-q|^{2+s} } \, dx=&\; 
   \left(\int_{ A\cup A'} + \int_{B\cup B'} + \int_{C \cup C'} \right) \frac{ \chi_{\Co E} (x) -\chi_{E}(x)}{|x-q|^{2+s} } \, dx
   \\ \geq &\;  2 \int_C \frac{dx}{|x-q|^{2+s} }.
   }
   Now $C$ contains a cone $\mathcal C_{\theta}(q)$ centered at $q$, of opening $\gamma:=\gamma(\theta)$, independent on $q$.
  
 In particular (see Figure \ref{gg}) we have that
   \[ \frac{\gamma}2 =\frac{\pi}2 -\alpha-\frac{\pi-\theta}2 \geq \frac{\theta}2 -\frac{\theta}4 =\frac{\theta}4,\]
   given that
   \[ \cot \alpha = \frac{R}{d(q,l_1)} \geq \frac{\max_{x\in \bar B_1} d(x,l_1) \cot \frac{\theta}4}{d(q,l_1)}\geq \cot \frac{\theta}4. \]  
    \begin{center}
\begin{figure}[htpb]
	\centering
	\includegraphics[width=0.65\textwidth]{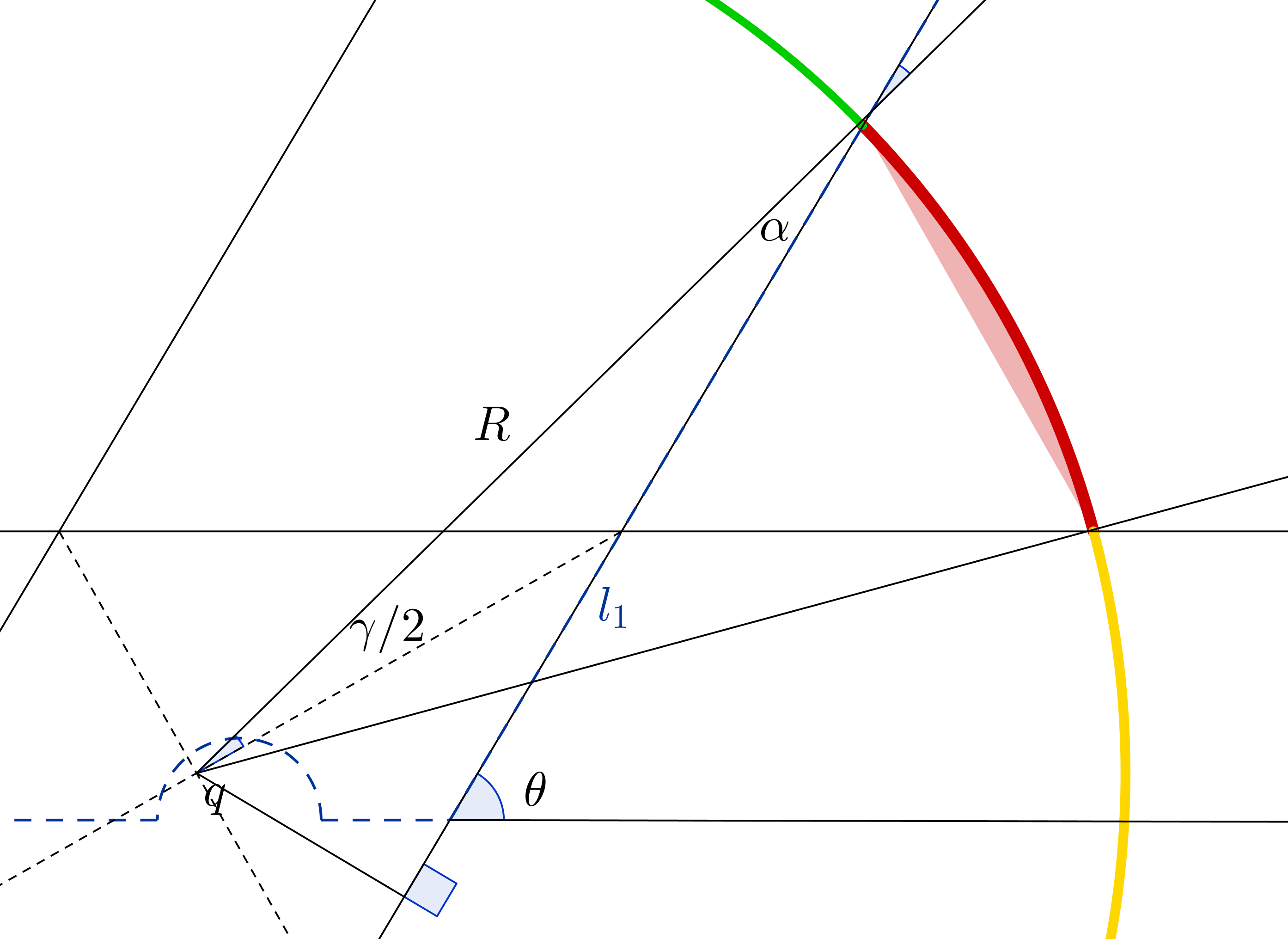}
	\caption{The small cone $\mathcal C_\theta(q)$ in Example \ref{exxx}}   
	\label{gg}
\end{figure} 
	\end{center}
     Passing to polar coordinates, it follows that
   \[  \int_C \frac{dx}{|x-q|^{2+s} } \, dx  \geq \int_{\mathcal C_{\theta}(q)} \frac{dx}{|x-q|^{2+s} } \, dx =\gamma  \frac{R^{-s}}{s}\geq \frac{\theta}2 \frac{R^{-s}}s.\]
   This concludes the sketch of the proof. 
\end{proof}

The reader may wonder if this behavior depends on the particular geometry of the sets involved. The answer is no, and actually it only matters that the exterior data occupies, at infinity, less than half the space, or mathematically written
\[ \alpha(E_0) <\frac{\omega_n}{2}.\]

Intuitively, one can try to understand why this is to be expected. Let us first check
\eqref{pers0}, and re-write it as
\[ \lim_{s\searrow 0} s P_s(E,\Omega) = \alpha(E_0)|\Omega| + \left(\omega_n -2\alpha(E_0)\right) |E \cap \Omega|.\] 
In broad terms, minimizing the perimeter for $s$ small reduces to minimizing
 $(\omega_n -2\alpha(E_0))|E\cap \Omega|$. Hence if 
 \[ \alpha(E_0)<\omega_n/2\] 
 the best choice to select the minimal set is to take $E\cap \Omega =\emptyset$ (whereas, for $\alpha(E_0)>\omega_n/2$, $E\cap \Omega=\Omega$ would be the right choice). 
We notice also that if $\alpha(E_0)=\omega_n/2$, we do not get any information at this point.
 
Another element that can help, and that further strengthen the intuition, is the asymptotic behavior of the fractional mean curvature \eqref{mcs0}.
Suppose now that $\alpha(E_0)<\omega_n/2$. Then, given the continuity of the fractional mean curvature in $s$ (see \cite[Section 5]{bucluk}),  from \eqref{mcs0} for $s$ small enough  it follows that 
\[ \I_s[E](x) >0,\]
(and this holds for any set $E$ such that $E\setminus \Omega=E_0$, not only for $s$-minimal sets).
This strict positivity of the mean curvature comes very handy when one compares it with the Euler-Lagrange equation recalled in Theorem \ref{el}.  If there exists an exterior (or interior) tangent ball to the minimal surface $\partial E$, then 
\[ \I_s[E](x) =0.\]
This would provide a contradiction at all (smooth) points on the boundary of the minimal set, inside the domain $\Omega$, and would show that there cannot be any boundary of $E$ inside $\Omega$. 

This informal discussion can be set in the following theorem (see \cite[Theorem 1.7]{bucluk}).

\begin{theorem}\label{thm}
Let $\Omega \subset \Rn$ be a bounded and connected open set with $C^2$ boundary and let $E_0 \subset \Co \Omega$ be given such that
\[ \alpha(E_0)<\frac{\omega_n}2.\]
Suppose that $E_0$ does not completely surround $\Omega$, i.e., there exists $M>0$ and $x_0\in \partial \Omega$ such that
\eqlab { \label{bmm} B_M(x_0)\cap \Co \Omega \subset \Co E_0.}
Then there exists $s_0\in (0,1/2)$ such that for all $s<s_0$, the corresponding $s$-minimal surface sticks completely to the boundary of $\Omega$, that is
\[ E\cap \Omega=\emptyset.\]  
\end{theorem}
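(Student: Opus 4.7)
The plan is to follow the contradiction template of the proof of Example \ref{exxx}: assume $|E \cap \Omega| > 0$, locate a point $q \in \partial E \cap \bar \Omega$ admitting an exterior tangent ball of a uniform-in-$s$ radius, and confront the asymptotic positivity of the fractional mean curvature at $q$ (inherited from $\alpha(E_0) < \omega_n / 2$ via \eqref{mcs0}) with the Euler-Lagrange equation of Theorem \ref{el}, which forces $\I_s[E](q) \le 0$.

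The analytic core is the uniform mean-curvature lower bound generalizing Step 1 of Example \ref{exxx}: for any measurable $F$ with $F \setminus \Omega = E_0$ and any $q \in \bar \Omega$ admitting an exterior tangent ball $B_\delta(p) \subset \Co F$ of some fixed radius $\delta > 0$, one has
\[
\liminf_{s \searrow 0} s\, \I_s[F](q) \;\ge\; \omega_n - 2\alpha(E_0),
\]
uniformly in $q$. The integrand is split over the reflected pair $D_\delta := B_\delta(p) \cup B_\delta(p')$ with $p' := 2q - p$ (contribution $\ge 0$ by the symmetry $x \mapsto 2q - x$, which preserves $|x-q|^{-n-s}$ and sends $B_\delta(p) \subset \Co F$ onto $B_\delta(p')$), over $K_\delta \setminus D_\delta$ with $K_\delta$ the convex hull of $D_\delta$ (bounded below by $-C\delta^{-s}$), over the annulus $B_R(q) \setminus K_\delta$ (bounded below by $-\omega_n(\delta^{-s} - R^{-s})/s$), and over the far field $\Co B_R(q)$. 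Choosing $R > \mathrm{diam}(\Omega)$ ensures $\Omega \subset B_R(q)$ for every $q \in \bar \Omega$, so that $F$ coincides with $E_0$ on $\Co B_R(q)$ and the far-field contribution equals $\omega_n R^{-s}/s - 2\alpha_s(E_0, R, q)$. Multiplying by $s$ and sending $s \searrow 0$, using $\delta^{-s}, R^{-s} \to 1$ and the base-point-independent convergence $s\alpha_s(E_0, R, q) \to \alpha(E_0)$ coming from \eqref{alphaeq}, the claim follows.

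Producing the required $q$ under the contradiction hypothesis $|E \cap \Omega| > 0$ splits into two geometric cases. \emph{Case A}: $\Omega \subset E$ up to null sets. Then \eqref{bmm} combined with $\Omega \subset E$ forces $\partial \Omega \cap B_M(x_0) \subset \partial E$, and in particular $x_0 \in \partial E$; the $C^2$ reach \eqref{r0} of $\partial \Omega$ provides an exterior tangent ball to $\Omega$ at $x_0$ of radius $\delta := \min\{r_0, M/2\}$, contained in $B_M(x_0) \cap \Co \Omega \subset \Co E$, and Theorem \ref{el}(2) gives $\I_s[E](x_0) \le 0$. \emph{Case B}: both $|\Omega \setminus E|$ and $|E \cap \Omega|$ are positive. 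Fix $\delta \in (0, \min\{r_0/4, M/4\})$ and perform a sliding-ball argument: start at $B_\delta(x_0 + \delta\, \nu_{\mathrm{out}}(x_0)) \subset B_M(x_0) \cap \Co \Omega \subset \Co E$, slide the center along the inward normal until the ball is fully contained in $\Omega_\delta := \{z \in \Omega : d(z, \partial \Omega) > \delta\}$ (connected for small $\delta$ by the $C^2$ regularity of $\partial \Omega$), then continue along a path in $\Omega_\delta$ toward some $y \in E \cap \Omega_\delta$. The first contact with $\partial E$ yields $q \in \partial E \cap \bar \Omega$ equipped with an exterior tangent ball of radius $\delta$; the sliding geometry ensures that either $q \in \Omega$ (Theorem \ref{el}(1) applies and $\I_s[E](q) = 0$) or $q \in \partial \Omega$ close to $x_0$ (Theorem \ref{el}(2) applies, the exterior tangent ball to $\Omega$ at $q$ still sitting inside $B_M(x_0) \cap \Co \Omega \subset \Co E$ for $\delta$ small). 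In either instance, combination with the Step 1 bound produces the required contradiction for $s$ small.

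The principal obstacle is ensuring that the radius $\delta$ in Case B can be taken $s$-independent, while $|E_s \cap \Omega_\delta|$ may a priori shrink with $s$: indeed \eqref{pers0} combined with the competitor $F = E_0$ in Definition \ref{smin} yields via minimality $(\omega_n - 2\alpha(E_0))|E_s \cap \Omega| \to 0$, so $|E_s \cap \Omega|$ vanishes as $s \searrow 0$. This is resolved by noting that the Step 1 estimate in fact tolerates a mildly $s$-dependent threshold $\delta > \kappa^{-1/s}$ with $\kappa := 2 - 2\alpha(E_0)/\omega_n > 1$, and by ruling out concentration of $E_s$ in layers of $\partial\Omega$ thinner than this threshold via an additional exterior-tangent-ball argument rooted in the $C^2$ reach \eqref{r0}; the remaining uniformity in $q$ of the far-field limit follows from the intrinsic definition of $\alpha(E_0)$.
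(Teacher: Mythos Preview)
Your proposal is correct and follows essentially the same route as the paper's sketch: the four-region decomposition of $\I_s[E](q)$ (the reflected pair $D_\delta$, the lune $K_\delta\setminus D_\delta$, the annulus $B_R(q)\setminus K_\delta$, and the far field $\Co B_R(q)$ where one invokes $s\,\alpha_s(E_0,R,q)\to\alpha(E_0)$), the sliding-ball argument launched from the exterior region supplied by \eqref{bmm}, and the confrontation with Theorem~\ref{el} are all exactly the paper's Steps~1--3. The one point worth noting is your final paragraph: the paper does not introduce an $s$-dependent radius nor use $|E_s\cap\Omega|\to 0$; instead it fixes $\delta\in(\delta_\sigma, r_0/4)$ once and for all, first shows $E\cap\Omega_{-r_0/2}=\emptyset$ by sliding the ball along paths in $\Omega_{-r_0/2}$, and then sweeps the remaining boundary layer by expanding $\Omega_{-\rho}$ from $\rho=r_0/2$ toward $\rho=0$, using the $C^2$ reach \eqref{r0} to supply an exterior tangent ball of radius $r_0/4$ at the first contact---so the concentration concern you raise is resolved by a clean two-phase sweep rather than by shrinking $\delta$ with $s$.
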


\begin{proof}[Sketch of the proof]

We follow the proof of Example \ref{exxx}, with some additional difficulties.

\noindent \textbf{Step 1}. In order to carry out Step 1, we split the integral into the four components, exactly as we did in Example \ref{exxx}. Let $\delta$ be a radius (that will be chosen as small as we want in the sequel), and $p\in \Omega$ such that $B_\delta(p)$ is  compactly contained in $\Omega$, exterior tangent to $\partial E$, that is
\[ B_\delta(p) \subset\Co E\cap \Omega, \quad q\in \partial E \cap \partial B_\delta(p).\]
Let $R>4$ be as large as we wish. We observe that the estimates in 1), 2) and 3) stay exactly the same. It only remains to prove 4), and actually we notice that 
\bgs{ \int_{\Co B_R(q)} \frac{\chi_{\Co E} (x)- \chi_E(x)}{|x-q|^{n+s}} \, dx = &\;
 \int_{\Co B_R(q)} \frac{1- 2 \chi_E(x)}{|x-q|^{n+s}} \, dx
 \\
 =&\;\frac{\omega_n R^{-s}}s - \alpha_s(E,R,q),}
recalling \eqref{alphaeq}. 
Then it follows that
\bgs{
		s\I_s[E](q) \geq \omega_n R^{-s}- \delta^{-s}( C_1 s +\omega_n) + \omega_n  R^{-s}- 2s\alpha_s(E,R,q) .}
  Now
  \[ \lim_{s\searrow 0}\left( \omega_n R^{-s} -2s\alpha_s(E,R,q) \right)= \omega_n-2\alpha(E):=C(E).\]
    The computations follow exactly as in the proof of Example \ref{exxx}, with $C(E)$ instead of $C(\theta)$.   
  Notice also that, in case $E$ is a cone, $\alpha(E)$ is exactly the opening of the cone (hence, $\alpha(\Sigma)=2\theta$).  
  
   Therefore there exists $\sigma$ such that, for all $s\leq \sigma$, having at  $q$ an exterior tangent ball of radius (at least) $\delta_\sigma$, implies that
 \eqlab{ \label{mmma} s\I_s[E](q) \geq \frac{C(E)}4 >0.}
  \medskip
  
\noindent \textbf{Step 2}. 
In order to prove Step 2, we need to fit  a ball of suitable small radius inside $\Omega	\cap \Co E$. 

We define $r_0$ as in \eqref{r0}, and $\sigma$ small enough such that
\[ \delta_\sigma<\delta \leq \frac14 \min\{M,r_0\}. \]
Since $\delta>\delta_{\sigma}$, \eqref{mmma} holds.

Denote by $\nu_\Omega(x_0)$ the exterior normal to $\partial \Omega$ at $x_0\in \partial \Omega$. ``Taking a step'' of length $\delta$ away from the boundary of $\Omega$ inside the ball $B_M(x_0)$, in the direction of the normal, reaching $x_1$, we have that $B_\delta(x_1) \subset B_M(x_0)\cap \Co \Omega\ \subset \Co E$. We want to ``move'' this ball along the normal towards the interior of $\Omega$, until we reach $x_2$, the  point on the normal at distance $r_0$ from the boundary of $\Omega$. We can exclude an encounter with $\partial E$, both on the boundary of $\Omega$ and inside of $\Omega$, since in both cases we have the Euler-Lagrange equation and Step 1, which provide a contradiction. Thus, denoting
\[ \Omega_{-r_0/2}:= \Big\{ x\in \Omega \; \Big| \; d(x,\partial \Omega)=\frac{r_0}2\Big\},
\]
we have that
   \[B_\delta(x_2) \subset \Omega_{r_0/2} \cap \Co E.\]

Now, if the boundary of $E$ lies inside $\Omega_{-r_0/2}$, we pick $p\in E\cap \Omega_{-r_0/2}$ and slide the ball $B_\delta(x_2)$ along a continuous path connecting $x_2$ with $p$. At the first contact point on $\partial E \cap \partial B_\delta(\bar x)$, with $\bar x$ lying on the continuous path between $x_2, p$, we obtain a contradiction from Step 1 and the Euler-Lagrange equation. We obtain the same contradiction by ``enlarging'' $\Omega_{-r_0/2}$, since, at the first contact point the ball $B_{\frac{r_0}4}$ provides a tangent exterior ball to $\partial E \cap \Omega_{-\rho}$, for some $\rho \in (0,r_0/2)$  We obtain that $E\cap \Omega =\emptyset$,   concluding the sketch of the proof.
\end{proof}

Of course, the analogue holds for the data that occupies, at infinity, more than half the space. In that case, the result is as follows.

\begin{theorem}\label{thm}
Let $\Omega \subset \Rn$ be a bounded and connected open set with $C^2$ boundary and let $E_0 \subset \Co \Omega$ be given such that
\[ \alpha(E_0)>\frac{\omega_n}2.\]
Suppose that $\Co E_0$ does not completely surround $\Omega$, i.e., there exists $M>0$ and $x_0\in \partial \Omega$ such that
\eqlab { \label{bmm} B_M(x_0)\cap \Co \Omega \subset E_0.}
Then there exists $s_0\in (0,1/2)$ such that for all $s<s_0$, the corresponding $s$-minimal surface sticks completely to the boundary of $\Omega$, that is
\[ E\cap \Omega=\Omega.\]  
\end{theorem}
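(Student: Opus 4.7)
The plan is to reduce the statement to the previous theorem (the case $\alpha(E_0)<\omega_n/2$) by passing to the complement. The key observation is that the fractional perimeter is symmetric under complementation: since $|\chi_E(x)-\chi_E(y)|=|\chi_{\Co E}(x)-\chi_{\Co E}(y)|$ pointwise, Definition \ref{pssss} gives $P_s(E,\Omega)=P_s(\Co E,\Omega)$ for every measurable $E$. Consequently, if $E$ is $s$-minimal in $\Omega$ with exterior data $E_0$, then $\tilde E:=\Co E$ is $s$-minimal in $\Omega$ with exterior data $\tilde E_0:=\Co\Omega\setminus E_0$; indeed for any competitor $G$ with $G\setminus\Omega=\tilde E_0$ we have $F:=\Co G$ with $F\setminus\Omega=E_0$, and
\[ P_s(\tilde E,\Omega)=P_s(E,\Omega)\leq P_s(F,\Omega)=P_s(G,\Omega). \]

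The second step is to verify that $\tilde E_0$ satisfies the hypotheses of the previous theorem. For the asymptotic density, the function $\alpha$ depends only on the behavior at infinity, so bounded modifications are irrelevant and $\alpha(\tilde E_0)=\alpha(\Co E_0)$. Writing $\chi_{\Co E_0}=1-\chi_{E_0}$ and using that
\[ \lim_{s\searrow 0}s\int_{\Co B_1}\frac{dx}{|x|^{n+s}}=\lim_{s\searrow 0}s\cdot\frac{\omega_n}{s}=\omega_n, \]
one gets $\alpha(\Co E_0)=\omega_n-\alpha(E_0)$, hence the assumption $\alpha(E_0)>\omega_n/2$ translates into
\[ \alpha(\tilde E_0)=\omega_n-\alpha(E_0)<\frac{\omega_n}{2}. \]
For the non-surrounding hypothesis \eqref{bmm}, the inclusion $B_M(x_0)\cap\Co\Omega\subset E_0$ is equivalent to $B_M(x_0)\cap\Co\Omega$ being disjoint from $\tilde E_0$, i.e.\ $B_M(x_0)\cap\Co\Omega\subset\Co\tilde E_0$, which is precisely the geometric assumption required by the previous theorem.

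Finally, applying the previous theorem to the $s$-minimal set $\tilde E$ with exterior data $\tilde E_0$ yields $s_0\in(0,1/2)$ such that $\tilde E\cap\Omega=\emptyset$ for all $s<s_0$, and translating back gives $\Co E\cap\Omega=\emptyset$, i.e.\ $E\cap\Omega=\Omega$, as desired. The main technical point is really just the identity $\alpha(\Co E_0)=\omega_n-\alpha(E_0)$ together with the symmetry $P_s(E,\Omega)=P_s(\Co E,\Omega)$; no further geometric construction is needed because Step 1 and Step 2 of the previous proof are used as a black box on the complement. The only subtle point worth checking carefully is that the regularity hypothesis on $\partial\Omega$ (used to fit a ball of radius $r_0$ as in \eqref{r0}) is invariant under passing from $E$ to $\Co E$, which is clear because it concerns $\partial\Omega$ alone.
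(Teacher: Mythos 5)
Your proposal is correct. Note that the paper itself offers no separate proof of this statement: it is presented as the ``analogue'' of the preceding theorem, whose proof is the geometric barrier argument (Steps 1--3, exterior tangent balls, strict positivity of $s\I_s[E]$ for small $s$). The implicit alternative to your route would be to rerun that argument symmetrically, using \emph{interior} tangent balls and the sign-reversed bound coming from $\omega_n-2\alpha(E_0)<0$, so that the fractional curvature is strictly negative at any smooth boundary point inside $\Omega$, contradicting the Euler--Lagrange equation. Your complementation reduction is cleaner and avoids redoing any geometry: the symmetry $P_s(E,\Omega)=P_s(\Co E,\Omega)$ from Definition \ref{pssss} does map $s$-minimal sets with data $E_0$ bijectively onto $s$-minimal sets with data $\tilde E_0=\Co\Omega\setminus E_0$, the identity $\alpha(\Co E_0)=\omega_n-\alpha(E_0)$ is exactly as you compute (and boundedness of $\Omega$ makes the bounded modification irrelevant, since $s\int_K|x|^{-n-s}\,dx\to0$ for bounded $K$), and the hypothesis $B_M(x_0)\cap\Co\Omega\subset E_0$ translates precisely into $B_M(x_0)\cap\Co\Omega\subset\Co\tilde E_0$ because $\Co\tilde E_0=\Omega\cup E_0$. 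The only caveat, which does not affect the statement as given, is that when the limit defining $\alpha$ fails to exist one must phrase the reduction with $\limsup$/$\liminf$ (the relation becomes $\overline{\alpha}(\Co E_0)=\omega_n-\underline{\alpha}(E_0)$), consistent with the convention the paper adopts after \eqref{alpha}; under the paper's simplifying assumption that the limit exists, your argument is complete.
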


On the other hand, if 
\[\alpha(E)=\frac{\omega_n}2,\]
 neither \eqref{pers0} nor \eqref{mcs0} provide any additional information, 
since we get that
\[ \lim_{s\searrow 0} sP_s(E,\Omega)= \frac{\omega_n}2|\Omega|\]
and that for any $q\in \partial E$
\[ \lim_{s\searrow 0} s\I_s[E](q) =0.\] 
This is actually not strange at all, since in this case, actually everything could happen, depending on $\Omega, E_0$ and their respective positions. Take as an example the ``simplest'' minimal set, the half-plane. 
If $\Omega \subset \{ x_2<0\}$, then $E\cap \Omega = \Omega$, if $\Omega \subset \{ x_2>0\}$ then $E\cap \Omega= \emptyset$, while if $\Omega$ sits ``in the middle'', $E$ covers the $\Omega \cap \{ x_2<0\}$,  and it is empty in $\Omega \cap \{ x_2>0\}$.

Naturally, one may wonder what happens if \eqref{bmm} does not holds, hence if the exterior data completely surrounds $\Omega$. At least with the geometrical type of reasoning we used, in absence of \eqref{bmm} we are unable to obtain the conclusion of complete stickiness. However, only two alternatives hold: either for $s$ small enough all $s$-minimal surfaces stick or they develop a wildly oscillating behavior. Indeed, as precisely stated in \cite[Theorem 1.4 B]{bucluk}, either there exists $\sigma >0$ such that for any $s<\sigma$,  all corresponding $s$-minimal sets with exterior data $E_0$ are empty inside $\Omega$, or there exist decreasing sequences of radii $\delta_k\searrow  0$ and of parameters $s_k \searrow 0$ such that  for every corresponding $s_k$-minimal set with exterior data $E_0$, it happens that $\partial E_{s_k}$ intersects every  ball $B_{\delta_k}(x) $ compactly contained in $\Omega$. 
For further details and a thorough discussion, refer to \cite{bucluk}. 

\smallskip

To conclude this note, we reason on Example \ref{msstick1} in the nonlocal framework for $s$-small enough. The question is what happens in an unbounded domain $\Omega$ and what does complete stickiness mean in this case.

\begin{example}\label{msstick2}
Let $0<\rho<R$, $M>0$ be fixed, and let $A^R_\rho$ be the annulus
\[ A_R^\rho =\big\{ x\in \R^2  \; \big| \; \rho<|x|<R\big\}.\] 
Let 
$\varphi\colon \Rn \to \R $ be such that
	\bgs{
 		&\varphi(x)=M ,&& \mbox{ for } x\in \bar B_\rho,\\
 		 	&	\varphi(x)=0, && \mbox{ in } A^R_{R+2}  }
 		and such that at infinity, it satisfies   
 	\[	\alpha(Sg(\varphi)) <\frac{\omega_{n+1}}2,	\]
 	 for instance,  depicted in Figure \ref{stt1}.

	\begin{figure}[htpb] \label{stt1}	
    \includegraphics[width=0.8\textwidth]{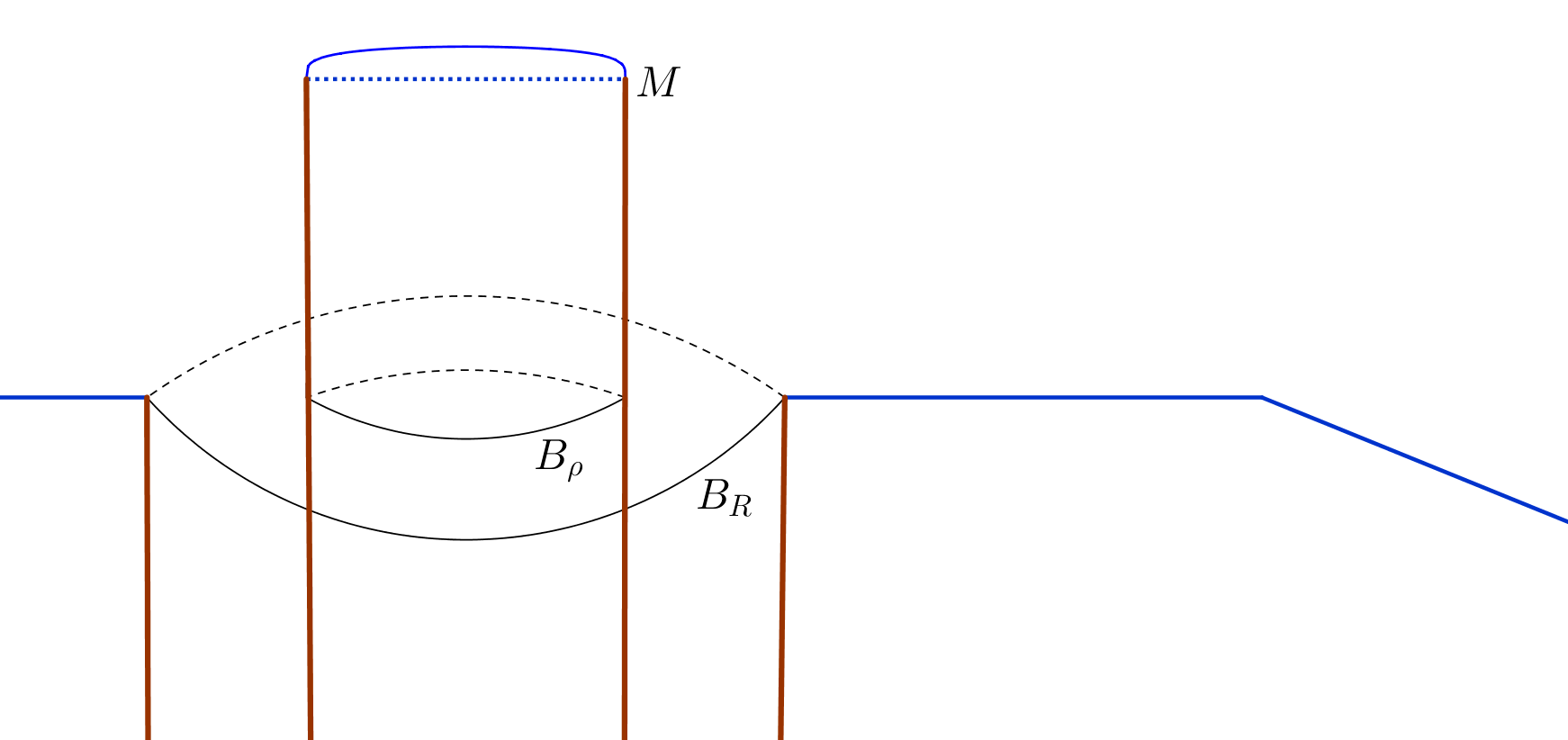} 
    \caption{Example \ref{msstick2}}   
	\label{stt1}
	
\end{figure} 
We want to minimize the $s$-perimeter in $A_R^\rho\times \R$, in the class of subgraphs with exterior data given by $\varphi$.
				What happens is that for any $K$ large enough, there exists some $s:=s(K)>0$ small enough such that 
				\[ u_s \leq -K.\]
				This means that for small values of the fractional perimeter, the stickiness occurs on both walls of the cylinder, with the height of the stickiness being as large as we want.  
				The idea of the proof starts from Theorem \ref{thm}. The exterior data does not surround the domain, thus we may start moving a ball from the outside towards the inside. There is however the challenge of the unbounded domain $ A_R^\rho\times \R$. We could solve this issue by cutting the cylinder at some height, solving the problem in the cut cylinder and then making that height as large as we want. Doing this, one should also take into account that, in principle, the data in the infinite cylinder minus the cut cylinder will contribute to $\alpha$ (this is actually negligible, since the slab has zero contribution from infinity).  However, this cutting procedure provides a non smooth  domain, thus Theorem \ref{thm} cannot be applied directly.  One could  to ``smoothen'' the domain by building ``domes'' on top of cylinders, or find a new approach to the proof that does not require a smooth domain. 
\end{example}
This discussion is developed in \cite{lukclaud},  where the authors prove a general theorem related to Example \ref{msstick2}, more precisely on the Plateau problem for nonlocal minimal graphs, with obstacles. We propose here a sketch of the theorem, referring to the original work for the complete statement, proof and further details.

\begin{theorem}
Let $\Omega\subset\R^n$ be a bounded and connected open set with $C^2$ boundary and let $\varphi:\R^n\to \R$ be such that
$$\varphi\in L^\infty_{loc}(\Rn)\quad\mbox{ and }\quad\overline{\alpha}\big(Sg(\varphi)\big)
<\frac{\omega_{n+1}}{2}.$$
Let $A\subset\subset\Omega$ be a bounded open set (eventually empty) with $C^2$ boundary. Let also
\begin{itemize}
\item [a)]  $\psi\in  C^2(\overline A)$. 
\end{itemize}
or
\begin{itemize}
\item [b)]  $\psi\in C(\overline A)\cap C^2(A)$ be such that the supgraph of $\varphi$ has $C^2$ boundary, i.e.
\[\left(\Omega\times \R \right) \setminus Sg(\varphi, \bar A)=  \big\{(x,t)\in\R^{n+1}\,\big|\,x\in\overline A,\,t> \psi(x)\big\}\]
has $C^2$ boundary. 
\end{itemize}
For every $s\in(0,1)$ we denote by $u_s$
 the unique $s$-minimal function that satisfies 
 	 \sys[]
 	 	{u_s=\varphi\quad\mbox{a.e. in }\Co\Omega\\
		u_s\ge\psi\quad\mbox{a.e. in }A.
		}
Then for every $k$ there exists $s_k\in(0,1)$ decreasing towards $0$, such that
\bgs{
u_s\le-k\quad\mbox{ a.e. in }\Omega\setminus A\quad\mbox{ and }
\quad u_s=\psi\quad\mbox{ a.e. in }A,
}
for every $s\in(0,s_k)$.
In particular
$$\lim_{s\to0}u_s(x)=-\infty,\quad\mbox{ uniformly in }x\in\Omega\setminus A.$$
\end{theorem}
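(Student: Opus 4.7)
The plan is to mimic the contradiction argument in the proof of Theorem~\ref{thm}, adapted to the unbounded cylinder $\Omega\times \R \subset \R^{n+1}$ and supplemented by the obstacle constraint. Fix $k\ge 0$. Arguing by contradiction, suppose that along some sequence $s_j\searrow 0$ at least one of the two claimed inclusions fails. Using interior smoothness of nonlocal minimal graphs, we may select a regular point $Q_j=(q_j,t_j)\in \partial Sg(u_{s_j})$ with either $q_j\in \Omega\setminus A$ and $t_j>-k$, or $q_j\in A$ and $t_j>\psi(q_j)$. The strategy is to produce at $Q_j$ an exterior tangent ball of radius bounded below independently of $j$, and then to compare the resulting strict positivity of $\I_{s_j}[Sg(u_{s_j})](Q_j)$ against the Euler-Lagrange equation of Theorem~\ref{el}, obtaining a contradiction.

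To construct the exterior tangent ball, I would start with a reference ball $B_{\delta_*}(P_*)$, where $P_*$ is chosen far above $\Omega$ and outside it: the hypothesis $\overline\alpha(Sg(\varphi))<\omega_{n+1}/2$, combined with $\varphi\in L^\infty_{\loc}$, ensures that $B_{\delta_*}(P_*)\subset\Co Sg(u_{s_j})$ uniformly in $j$. One then slides this ball along a continuous path towards $Q_j$. A first contact strictly before $Q_j$ would already produce, at an interior regular point of $\partial Sg(u_{s_j})$ with exterior tangent ball, the very contradiction we are chasing; so the path reaches $Q_j$ itself. Contacts with the lateral walls $\partial\Omega\times \R$ and with $\partial A\times \R$ are excluded by the $C^2$ regularity of $\partial\Omega$ and $\partial A$ together with the tubular-neighborhood construction \eqref{r0}, exactly as in Step~2 of the proof of Theorem~\ref{thm}. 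Contacts on the graph $\{(x,\psi(x)) : x\in \overline A\}$ are ruled out in the same way: in case a) this graph is $C^2$ up to $\partial A$, while in case b) the supergraph of $\psi$ in $A$ is $C^2$ by assumption, so the ball can be slid past these smooth hypersurfaces.

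The curvature lower bound follows by splitting $\R^{n+1}=D_\delta\cup P_\delta\cup (B_R(Q_j)\setminus K_\delta)\cup \Co B_R(Q_j)$ and repeating the four estimates of Step~1 of Theorem~\ref{thm}; in the last region I would write $\chi_{\Co Sg(u_{s_j})}-\chi_{Sg(u_{s_j})}=1-2\chi_{Sg(u_{s_j})}$ to reduce the far-field part to the quantity $\alpha_s(Sg(u_s),R,Q)$ from \eqref{alphaeq}. Crucially, $Sg(u_s)$ and $Sg(\varphi)$ differ only inside the bounded set $\Omega\times(-\infty,M)$ for some $M=M(\varphi,\psi)$, while the slab $\Omega\times[M,\infty)$ contributes $0$ to $\alpha$ by the example list at the end of Section~4; hence
\[
\lim_{s\to 0} s\,\alpha_s\bigl(Sg(u_s),R,Q\bigr)\;=\;\overline\alpha\bigl(Sg(\varphi)\bigr),
\]
uniformly for $Q$ in compact sets. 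Combining the four contributions as in \eqref{mmma} yields, for all $s$ small enough,
\[
s\,\I_{s}\bigl[Sg(u_{s})\bigr](Q_j)\;\ge\;\tfrac{1}{8}\bigl(\omega_{n+1}-2\overline\alpha(Sg(\varphi))\bigr)\;>\;0,
\]
while Theorem~\ref{el}(1) forces $\I_{s_j}[Sg(u_{s_j})](Q_j)=0$, a contradiction. The uniform divergence $u_s\to -\infty$ on $\Omega\setminus A$ then follows by applying the statement for each $k$.

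The main obstacle I expect is the unboundedness of $\Omega\times\R$, which prevents any direct citation of Theorem~\ref{thm}. Making the sliding-ball step rigorous requires either a preliminary truncation of the cylinder by smooth ``capsules'' — adding domes on top and bottom, as hinted at in Example~\ref{msstick2} — followed by a passage to the limit in the cap height, or else a direct implementation of the sliding-ball scheme in the unbounded setting. One must also carefully verify that the renormalized $\alpha$-contribution of $Sg(u_s)$ at infinity is genuinely controlled by $\overline\alpha(Sg(\varphi))$; this is the point at which the $L^\infty_{\loc}$ hypothesis on $\varphi$, together with $\psi\in C(\overline A)$, is essential to ensure that the symmetric difference $Sg(u_s)\triangle Sg(\varphi)$ is bounded in $\R^{n+1}$ up to a slab of zero $\alpha$-contribution.
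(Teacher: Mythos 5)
Your plan reproduces, in more detail, exactly the heuristic that the paper itself gives around Example~\ref{msstick2}: small-$s$ strict positivity of the fractional curvature of the subgraph, governed by $\overline\alpha(Sg(\varphi))<\omega_{n+1}/2$; an exterior ball slid in from high above (available because $\varphi\in L^\infty_{\loc}$, the graph analogue of \eqref{bmm}); contradiction with the Euler--Lagrange equation of Theorem~\ref{el}; and the remark that the cylinder $\Omega\times\R$ contributes nothing at infinity, so the far-field term is controlled by $Sg(\varphi)$ alone. Note, however, that this paper does not prove the theorem at all --- it only states it, deferring the proof to \cite{lukclaud} --- and the one point it explicitly singles out as the real difficulty is precisely the point you defer: Theorem~\ref{thm} is proved for a \emph{bounded} $C^2$ domain, the cylinder $\Omega\times\R$ is unbounded, truncating it destroys smoothness, and ``building domes'' or finding a proof that tolerates non-smooth domains is named in the paper as the open technical task. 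Your proposal acknowledges this (``either a preliminary truncation\dots or a direct implementation'') but carries out neither construction, nor the passage to the limit in the cap height, nor the uniform-in-$Q$ choices of $R$, $\delta_\sigma$ and of the tubular radius \eqref{r0} along the unbounded lateral boundary $\partial\Omega\times\R$. As it stands this is a plan with the hardest step missing, not a proof.

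There is also a genuine flaw in the treatment of the obstacle. Wherever $u_s=\psi$, the graph of $\psi$ \emph{is} part of $\partial Sg(u_s)$ (indeed this is the configuration the theorem predicts), so an exterior ball lying in $\Co Sg(u_s)$ can never be ``slid past'' it, however smooth it is; smoothness of $\psi$ is irrelevant to that. Moreover, at such contact points the minimality is only one-sided (admissible competitors must contain the obstacle subgraph over $A$), so the Euler--Lagrange relation degenerates to an inequality whose sign is compatible with strictly positive curvature: the ``positivity versus $\I_s=0$'' contradiction is simply unavailable there. The argument must therefore be arranged so that the decisive first contact occurs at a point where the constraint is inactive --- over $\Omega\setminus A$ above level $-k$, or over $A$ strictly above $\psi$ --- which requires a genuine additional step (choice of path and of $\delta$, using the continuity of $\psi$ on $\overline A$ and the vertical structure of the supergraph $\Co Sg(u_s)$), not the dismissal given in the proposal. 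Two smaller inaccuracies: since $\overline\alpha$ is a $\limsup$, the claimed identity $\lim_{s\to0}s\,\alpha_s(Sg(u_s),R,Q)=\overline\alpha(Sg(\varphi))$ should be stated as an asymptotic upper bound along the relevant sequence (which is all the contradiction needs); and $Sg(u_s)\triangle Sg(\varphi)$ is not bounded --- it is only contained in the infinite cylinder $\Omega\times\R$, whose vanishing contribution from infinity is the fact you actually use.
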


In this theorem, $\varphi$ plays the role of the boundary data, whereas $\psi$ is the obstacle. 
We conclude by remarking that the $s$-minimal sets asymptotically ``empties'' the unbounded domain $\Omega$, whereas if we pick a large enough $K$, the $s$-minimal surface will stick to both walls of the cylinder,  from $-K$ until respectively reaching the boundary data $\varphi$ and the obstacle $\psi$.

\bibliography{biblio}
\bibliographystyle{plain}

\end{document}